\numberwithin{equation}{section}
\setlist[enumerate,1]{label={\rm(\roman*)}, ref={\rm\roman*}}
\DeclareMathOperator{\Sets}{Sets}
\DeclareMathOperator{\Fix}{Fix}
\DeclareMathOperator{\proet}{proet}
\DeclareMathOperator{\fppf}{fppf}
\DeclareMathOperator{\Sc}{sc}
\DeclareMathOperator{\spl}{spl}
\DeclareMathOperator{\bas}{bas}
\DeclareMathOperator{\Dyn}{Dyn}
\DeclareMathOperator{\gr}{gr}
\DeclareMathOperator{\Int}{Int}
\newcommand{\longhar}{\lhook\joinrel\longrightarrow}
\newcommand*{\MyDef}{\,\mathrm{def}\,}
\newcommand*{\eqdef}{\mathop{\overset{\MyDef}{\scalebox{2.2}[1]{=}}}}
\def\isolow{\vbox to 0pt{\vss\hbox{$\scriptstyle\sim$}\vskip-2.4pt}}
\newcommand{\isor}{\,\xrightarrow{\isolow}\,}
\newcommand{\isorlong}{\,\xlongrightarrow{\isolow}\,}
\title{On a decomposition of $\boldsymbol{p}$-adic Coxeter orbits}
\author{Alexander B.~Ivanov}
\address{Mathematisches Institut, Universit\"at Bonn, Endenicher Allee 60, 53115 Bonn, Germany}
\address{(new address) Ruhr-Universit\"at Bochum,  Fakult\"at f\"ur Mathematik, Universit\"atsstrasse 150, 44780 Bochum, Germany}
\email{a.ivanov@rub.de}
\begin{document}



\maketitle

\begin{prelims}

\DisplayAbstractInEnglish

\bigskip

\DisplayKeyWords

\medskip

\DisplayMSCclass







\end{prelims}


\newpage

\setcounter{tocdepth}{1}

\tableofcontents


\section{Introduction}\label{sec:intro}

Let $\bfG$ be an unramified reductive group over a non-archimedean local field $k$. In \cite{Ivanov_DL_indrep} $p$-adic Deligne--Lusztig spaces $X_w(b)$ attached to $\bfG$ were constructed. This allows one to apply the rich variety of methods from Deligne--Lusztig theory \cite{DeligneL_76} to the study of representations of the $p$-adic group $\bfG(k)$. Indeed, many results in this direction were obtained recently; \textit{cf.} \cite{CI_loopGLn, ChenS_17, ChanO_21, DudasI_20} and references therein.


The classical Deligne--Lusztig varieties are smooth schemes of finite type over a finite field. In contrast, the $X_w(b)$ are sheaves for the arc-topology, \textit{cf.} \cite{BhattM_18}, on perfect algebras over the residue field of $k$. Concerning their representability, the situation is slightly delicate: In many cases it is known that the $X_w(b)$ are ind-representable; \textit{cf.} \cite[Theorem~C]{Ivanov_DL_indrep}.
Also, there are examples where $X_w(b)$ is known to be an ind-scheme which is not a scheme; \textit{cf.} \cite[Corollary~10.2]{Ivanov_DL_indrep}. On the other hand, for $\bfG = {\bf GL}_n$, $c$ Coxeter and $b$ basic, $X_c(b)$ is a scheme; \textit{cf.} \cite[Proposition~2.6]{CI_loopGLn}. Moreover, the proof of this last fact has led to a quite explicit description of (a cover of) $X_c(b)$ in this case, eventually allowing one to compute large parts of its cohomology and relate it to the local Langlands correspondences, which was the main task of \cite{CI_loopGLn, CI_DrinfeldStrat}.

Motivated by the last sentence, we analyze here the geometry of the spaces $X_c(b)$ and their natural covers $\dot X_{\bar c}(b)$ when $\bfG$ is a classical group,\footnote{We call an unramified $k$-group \emph{classical} if each connected component of its Dynkin diagram is of type $A_{n-1}, B_m,C_m,D_m,{}^2A_{n-1}$ or ${}^2D_m$; see Definition~\ref{def:classical_type} for more details.} $c$ is a (twisted) Coxeter element and $b$ is basic. Under these assumptions our main result, Theorem~\ref{thm:main_decomposition}, shows that $X_c(b)$ and $\dot X_{\bar c}(b)$ admit natural decompositions as disjoint unions of translates of certain  integral level Deligne--Lusztig spaces of Coxeter type. Those integral level spaces are easier to handle. In particular, they are affine schemes, and it follows that $X_c(b)$ and $\dot X_{\bar c}(b)$ are schemes, proving \cite[Conjecture~1.1]{Ivanov_DL_indrep} in this case. Our decomposition result, along with the Mackey-type formula shown in \cite{DudasI_20} (as well as techniques from \cite{Lusztig_76_inv,ChanO_21}), can be applied---as
in \cite{CI_loopGLn}---to study the cohomology of $X_c(b)$ and $\dot X_{\bar c}(b)$ and smooth $\bfG(k)$-representations therein.  
Beside our main result, we prove some related results on rational conjugacy classes of tori and a version of Steinberg's cross section; see Section~\ref{sec:intro_further_topics}
below for a summary. Now, we introduce the relevant notation and state our main result, Theorem~\ref{thm:main_decomposition}.

\subsection{Setup}\label{sec:nota} Let $\breve k$ denote the completion of a maximal unramified extension of $k$, and let $\caO_k, \caO_{\breve k}$ be the rings of integers of $k, \breve k$. Let $\sigma$ be the Frobenius of $\breve k/k$. Let $\bfT \subseteq \bfB \subseteq \bfG$ be a centralizer of a maximal split $k$-torus and a $k$-rational Borel subgroup of the unramified group $\bfG$. (Note that $\bfT$ is a $k$-rational maximal torus of $\bfG$.) Let $\bfU$, resp.\ $\bfU^-$, be the unipotent radical of $\bfB$, resp.\ the opposite Borel subgroup. Let $W$ be the Weyl group of $\bfT$ and $S \subseteq W$ the set of simple reflections determined by $\bfB$. Denote by $\caB_{\bfG,\breve k}$ the Bruhat--Tits building of the adjoint group $\bfG^{\ad}$ of $\bfG$ over $\breve k$ and by $\caA_{\bfT,\breve k}$ the apartment of $\bfT$ therein. Denote by $\sigma$ the action induced by Frobenius on $W,S,\caB_{\bfG,\breve k}, \caA_{\bfT,\breve k}$ and $\bfG(\breve k)$ (as well as its subgroups).

An element $b \in \bfG(\breve k)$ is called \emph{basic} if its Newton point $\nu_b$ factors through the center of $\bfG$; \textit{cf.}~\cite{Kottwitz_85}. Twisting $\sigma$ by $\Ad(b)$, one gets an inner $k$-form $\bfG_b$ of $\bfG$, see Section~\ref{sec:twisting_Frobenius}. Similarly, for $w \in W$, twisting the Frobenius on $\bfT(\breve k)$ by $\Ad(w)$, one gets an (outer) $k$-form $\bfT_w$ of $\bfT$.

The space $X_w(b)$, introduced in \cite{Ivanov_DL_indrep}, is the intersection of $L\caO(w) \subseteq L(\bfG/\bfB)^2$ with the graph of $b\sigma \colon L(\bfG/\bfB) \rar L(\bfG/\bfB)$, where $L$ denotes the loop functor (see Section~\ref{sec:loops}). Similarly, we have its coverings $\dot X_{\bar w}(b)$, where $\bar w$ lies in $\overline F_w\,/\ker\bar\kappa_w$, a certain discrete quotient of the set $F_w(\breve k) \subseteq \bfG(\breve k)$ of all lifts of $w$; \textit{cf.}~Sections~\ref{sec:fibers_LFw_Fw} and~\ref{sec:stuff_on_torsors}. For more details on $X_w(b)$ and $\dot X_{\bar w}(b)$, see Section~\ref{sec:review_some_pDL} and Definition~\ref{conv:dotXwb_indep}. The group $\bfG_b(k)$ acts on $X_w(b)$, and $\bfG_b(k)\times \bfT_w(k)$ acts on $\dot X_{\bar w}(b)$.  The natural map $\dot X_{\bar w}(b) \rar X_w(b)$ is $\bfG_b(k)$-equivariant.

An element $c \in W$ is called \emph{twisted Coxeter}, or simply \emph{Coxeter}, if any of its reduced expressions contains precisely one element from each $\sigma$-orbit on $S$; \textit{cf.} \cite[Section~7]{Springer_74}. Usually, we denote by $w$ an arbitrary element of $W$, whereas the letter $c$ is reserved for Coxeter elements.

\smallskip

We refer to Section~\ref{sec:notation_and_setup} for more notation and setup.

\subsection{Main result}\label{sec:intro_main} Let $c \in W$ be Coxeter, $\bar c \in \overline F_c \,/ \ker\bar\kappa_c$ a lift of $c$ and $b \in \bfG(\breve k)$ basic. We prove a decomposition result for $X_c(b)$ and $\dot X_{\bar c}(b)$. The space $X_c(b)$ is always non-empty, and---after an equivariant isomorphism---one can assume that $c$ is a special Coxeter element (see Definition~\ref{def:special_Coxeter}) and $b$ is a lift of $c$. Moreover, in Proposition~\ref{prop:simplifying_assumptions}, we will see that either $\dot X_{\bar c}(b) = \varnothing$, or---after an equivariant isomorphism---we may even assume that $b$ lies over $\bar c$. In the rest of Section~\ref{sec:intro_main} we work under these simplifying assumptions (which do not result in any loss of generality).

As $b$ lies over $c$, the affine transformation $b\sigma$ of $\caA_{\bfT,\breve k}$ has precisely one fixed point $\bfx$ (this follows from \cite[Lemma~7.4]{Springer_74} as $c$ is Coxeter). 
Let $\caG_\bfx$ be the corresponding  parahoric $\caO_{\breve k}$-model of $\bfG$. It descends to an $\caO_k$-model $\caG_{\bfx,b}$ of $\bfG_b$. In particular, the Frobenius $\sigma_b:=   \Ad(b) \circ \sigma$ of $L\bfG$ fixes the subgroup $L^+\caG_{\bfx}$. Here $L$ and $L^+$ denote the functors of (positive) loops; see Section~\ref{sec:loops}. We also have the corresponding parahoric model $\caG_{\bfx}^{\ad}$ of $\bfG^{\ad}$. Consider the scheme $\dot X_{\bar c,b}^{\caG_{\bfx}}$ defined by the cartesian diagram
\begin{equation}\label{eq:integral_Coxeter_Stcsss_space}
\xymatrix{
\dot X_{\bar c,b}^{\caG_{\bfx}} \ar[r] \ar[d] & L^+({}^c\bfU \cap \bfU^-)_{\bfx}  \ar[d] \\ 
L^+\caG_{\bfx} \ar[r] & L^+\caG_{\bfx}\rlap{,}
}
\end{equation}
where the lower map is $g \mapsto g^{-1}\sigma_b(g)$ and where $({}^c\bfU \cap \bfU^-)_{\bfx}$ is the closure of ${}^c\bfU \cap \bfU^-$ in $\caG_{\bfx}$. We also consider the quotient for the pro-\'etale (equivalently, arc-)topology,
\begin{equation}\label{eq:integral_Coxeter_Stcsss_quot_space}
X_{c,b}^{\caG^{\ad}_{\bfx}} = \dot X_{\bar c,b}^{\caG^{\ad}_{\bfx}} / \bfT_c^{\ad}(k),
\end{equation}
where $\dot X_{\bar c,b}^{\caG^{\ad}_{\bfx}}$ is \eqref{eq:integral_Coxeter_Stcsss_space} applied to the parahoric model $\caG_{\bfx}^{\ad}$ of the adjoint group $\bfG^{\ad}$. Then $X_{c,b}^{\caG^{\ad}_{\bfx}}$ and $\dot X_{\bar c,b}^{\caG_{\bfx}}$ are infinite-dimensional affine schemes; \textit{cf.}~Proposition~\ref{representability_of_integral_DLVs}.

\begin{thm}\label{thm:main_decomposition}
Let $\bfG$ be an unramified group of classical type. Let $c,\bar c$ and $b$ be as above. Then there exist a $\bfG_b(k)$-equivariant isomorphism
\begin{equation}\label{eq:main_iso_1}
X_c(b) = \coprod_{\gamma \in \bfG^{\ad}_b(k)/\caG^{\ad}_{\bfx, b}(\caO_k)} \gamma X_{c,b}^{\caG^{\ad}_{\bfx}}
\end{equation}
and a $\bfG_b(k) \times \bfT_c(k)$-equivariant isomorphism
\begin{equation}\label{eq:main_iso_2}
\dot X_{\bar c}(b) \cong \coprod_{\gamma \in \bfG_b(k)/\caG_{\bfx,b}(\caO_k)} \gamma \dot X_{c,b}^{\caG_{\bfx}}.
\end{equation}
\end{thm}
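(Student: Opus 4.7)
The displays \eqref{eq:main_iso_1} and \eqref{eq:main_iso_2} are tautologically disjoint unions of translates of a fixed integral-level space, so the theorem reduces to producing the claimed equivariant isomorphisms with them. I would establish \eqref{eq:main_iso_2} first and then deduce \eqref{eq:main_iso_1} by taking arc-quotients by $\bfT_c(k)$ and pushing forward along $\bfG\to\bfG^{\rm ad}$.

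The core of the plan is to construct a $\bfG_b(k)\times\bfT_c(k)$-equivariant map
\[
\pi\colon\dot X_{\bar c}(b)\longrightarrow \bfG_b(k)/\caG_{\bfx,b}(\caO_k)
\]
whose fiber over the identity coset is $\dot X_{c,b}^{\caG_\bfx}$. To define $\pi$ on points, I would pick a representative $g \in L\bfG$ of a given point, so that $g^{-1}b\sigma(g)$ lies over the lift $\bar c$; then $g\bfT g^{-1}$ is a $\sigma_b$-stable maximal torus and its apartment $g\cdot\caA_{\bfT,\breve k}$ sits inside $\caB_{\bfG,\breve k}$. Since $b$ lies over the Coxeter element $c$, the affine transformation $b\sigma$ has a unique fixed point $\bfx_g$ on this apartment by \cite[Lemma~7.4]{Springer_74}. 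Its $\bfG_b(k)$-orbit in $\caB_{\bfG,\breve k}^{b\sigma}=\caB_{\bfG_b,k}$ yields $\pi([g])$ as a coset in $\bfG_b(k)/\caG_{\bfx,b}(\caO_k)$; the choice of $\bar c$ rather than $c$ pins the ambiguity down to $\caG_{\bfx,b}(\caO_k)$ rather than to its full stabilizer. The first technical task is to check that this prescription descends from representatives to an honest morphism of arc-sheaves and is equivariant for the two group actions.

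Over the identity coset one may choose a representative with $g\in L^+\caG_\bfx$, so that $g^{-1}\sigma_b(g)\in L^+\caG_\bfx$ and lies over $\bar c$. At this stage I would apply the loop version of Frobenius-twisted Steinberg's cross section announced in the abstract to arrange $g^{-1}\sigma_b(g)\in L^+({}^c\bfU\cap\bfU^-)_\bfx$, matching the Cartesian diagram \eqref{eq:integral_Coxeter_Stcsss_space} and identifying this fiber with $\dot X_{c,b}^{\caG_\bfx}$. Equivariance of $\pi$ transports this identification to every other coset, and disjointness of the images is inherited from the disjointness of the cosets on the right-hand side.

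The main obstacle is surjectivity of $\pi$: every $\sigma_b$-stable torus arising as $g\bfT g^{-1}$ for some point of $\dot X_{\bar c}(b)$ must be $\bfG_b(k)$-conjugate to one coming from $g\in L^+\caG_\bfx$. Here I would invoke the extension of the DeBacker--Reeder classification of rational conjugacy classes of unramified tori to extended pure inner forms that the paper announces. It should provide a bijection between rational conjugacy classes of such tori and $\bfG_b(k)$-orbits of their canonical fixed points in $\caB_{\bfG_b,k}$; for Coxeter tori, this collapses to the single orbit through $\bfx$. The restriction to classical $\bfG$ is presumably essential precisely at this step, since the shape of Coxeter tori in exceptional types and the corresponding DeBacker--Reeder parametrization need not behave uniformly across types. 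Granting this, \eqref{eq:main_iso_2} assembles from the fibration and equivariance of $\pi$, and \eqref{eq:main_iso_1} follows by descent.
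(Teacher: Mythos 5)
Your proposal misses the central difficulty of the proof, and the map $\pi$ you propose is not well defined as stated. For a point $g$ of $\dot X_{\bar c}(b)$ one only has $g^{-1}b\sigma(g)\in \bfU(L)\dot c\,\bfU(L)$ (or, after applying the Steinberg cross section, $g^{-1}\sigma_b(g)\in L({}^c\bfU\cap\bfU^-)\dot c b^{-1}$); this element does \emph{not} lie in $\bfN(\breve k)$, so ${}^g\bfT$ is not $\sigma_b$-stable and $b\sigma$ does not preserve the apartment $g\cdot\caA_{\bfT,\breve k}$ --- there is no ``unique fixed point $\bfx_g$'' to speak of. The correct substitute is the composite $\dot X_{b,b}\hookrightarrow L\bfG\rar L\bfG/L^+\caG_{\bfx}$, and the entire content of the theorem is that this map factors through the \emph{fixed points} $(L\bfG/L^+\caG_{\bfx})^{\sigma_b}=\bfG_b(k)/\caG_{\bfx,b}(\caO_k)$, equivalently that $g^{-1}\sigma_b(g)$ lies in the \emph{integral} subgroup $L^+({}^c\bfU\cap\bfU^-)_{\bfx}$ and not merely in $L({}^c\bfU\cap\bfU^-)$. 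You attribute this integrality to the loop Steinberg cross section, but that result only trades $L({}^c\bfU)$ for $L({}^c\bfU\cap\bfU^-)$; it says nothing about $\varpi$-adic bounds on the coordinates. The integrality statement (Proposition \ref{prop:Xbb_XbbO_equal}) is the technical heart of the paper and is proved by a type-by-type analysis: one converts the equation $b\sigma(g)=gyb$ into a cyclic-vector relation in the isocrystal $(V_0\otimes_k L,\,b\sigma)$ and uses the Newton-polygon estimate of Lemma \ref{lm:isocrystal_lemma} to bound $\ord_\varpi$ of the entries of $y$. This is also precisely where the classical-type and special-Coxeter hypotheses are used, not in the DeBacker--Reeder parametrization as you suggest (the paper in fact expects the theorem for all unramified groups).

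Two further structural points. First, surjectivity of the decomposition is not supplied by the classification of rational conjugacy classes of tori; that classification is used elsewhere (to parametrize the covers $\dot X_{\bar c}(b)$ of $X_c(b)$ and in the reduction to $b$ lying over $\bar c$), whereas surjectivity again comes from the integrality statement plus pro-Lang for $L^+\caG_{\bfx}$. Second, the order of deduction matters: the paper proves \eqref{eq:main_iso_1} first (reducing to absolutely almost simple adjoint groups, establishing \eqref{eq:main_iso_2} there via Corollary \ref{cor:disj_dec_on_U_level}, and then descending the clopen decomposition along the quasi-compact $v$-cover $\dot X_{b,b}\rar X_{c,b}$), and only afterwards deduces \eqref{eq:main_iso_2} for general $\bfG$ via a derived embedding into a group with induced center. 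Your plan of proving \eqref{eq:main_iso_2} for arbitrary $\bfG$ first and then quotienting would have to confront the mismatch between the index sets $\bfG_b(k)/\caG_{\bfx,b}(\caO_k)$ and $\bfG_b^{\rm ad}(k)/\caG^{\rm ad}_{\bfx,b}(\caO_k)$ directly, which is nontrivial for non-adjoint groups.
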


After necessary preparations, Theorem~\ref{thm:main_decomposition} will be proven in Sections~\ref{sec:pDL_on_Coxeter_case} and~\ref{sec:type_by_type}. Let us outline the proof. By a $v$-descent argument, \eqref{eq:main_iso_1} follows from a particular case of \eqref{eq:main_iso_2} for absolutely almost simple adjoint groups. Showing \eqref{eq:main_iso_2} in this case presents the main technical difficulty, which is solved in Section~\ref{sec:type_by_type} by using a special property of the Newton polygons of isocrystals and analyzing each irreducible Dynkin type separately (here the assumption that $c$ is Coxeter becomes crucial). Finally, \eqref{eq:main_iso_2} can be deduced in general from \eqref{eq:main_iso_1}. We expect Theorem~\ref{thm:main_decomposition} to hold for all unramified reductive groups $\bfG$.


At this point, let us stress that $p$-adic Deligne--Lusztig spaces are quite different in nature from affine Deligne--Lusztig varieties (say, at Iwahori level). The latter were  introduced by Rapoport in \cite{Rapoport_05}. They are important because their unions appear as special fibers of Rapoport--Zink spaces. Note, for example, that whereas affine Deligne--Lusztig varieties are (perfect) schemes locally of finite type over $\overline\bF_q$, the $p$-adic Deligne--Lusztig spaces are essentially never of finite type over $\overline\bF_q$. On the other hand, there is, remarkably, some relation between these two types of objects. In \cite{CI_ADLV} it was shown that the inverse limit of a certain system of affine Deligne--Lusztig varieties (of increasing level) for the group ${\bf GL}_n$ is isomorphic to a $p$-adic Deligne--Lusztig space $\dot X_{\bar c}(b)$ for the same group and a Coxeter element $c$. We expect this result to generalize to other groups, especially in light of the similarity of the decomposition of Coxeter-type spaces $X_c(b)$ provided by Theorem~\ref{thm:main_decomposition} and a very similar-looking decomposition result of He--Nie--Yu \cite[Theorem~1.1(3)]{HeNY_22} for affine Deligne--Lusztig varieties attached to elements of the affine Weyl group with finite Coxeter part.


Let us also note that Theorem~\ref{thm:main_decomposition} implies \cite[Conjecture~1.1]{Ivanov_DL_indrep} in the following cases.

\begin{cor}\label{cor:schemes_they_are}
If\, $\bfG$ is an unramified group of classical type, $c \in W$ Coxeter, $\bar c \in \overline F_c\,/\ker\bar\kappa_c$ arbitrary and $b$ basic, then $X_c(b)$ and $\dot X_{\bar c}(b)$ are disjoint unions of affine schemes.
\end{cor}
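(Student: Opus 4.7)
The plan is to deduce the corollary directly from Theorem~\ref{thm:main_decomposition}, since that theorem already expresses $X_c(b)$ and $\dot X_{\bar c}(b)$ as coproducts of translates of the integral-level Coxeter spaces, which are affine schemes by Proposition~\ref{representability_of_integral_DLVs}.

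First I would invoke Proposition~\ref{prop:simplifying_assumptions} to reduce to the standing hypotheses of the theorem. Up to a $\bfG_b(k)$-equivariant isomorphism we may replace $c$ by a special Coxeter element and $b$ by a lift of $c$, so the $X_c(b)$ statement is immediately placed in the setting of~\eqref{eq:main_iso_1}. For the covering $\dot X_{\bar c}(b)$, Proposition~\ref{prop:simplifying_assumptions} gives either $\dot X_{\bar c}(b) = \varnothing$ (in which case there is nothing to prove, the empty scheme being a trivial disjoint union of affine schemes) or, after a further $\bfG_b(k) \times \bfT_c(k)$-equivariant isomorphism, the stronger condition that $b$ lies over $\bar c$, again matching the setting of~\eqref{eq:main_iso_2}. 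Being a disjoint union of affine schemes is invariant under such isomorphisms, so this reduction is harmless.

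With the simplifying assumptions in place, I would then read off the decompositions \eqref{eq:main_iso_1} and \eqref{eq:main_iso_2} from Theorem~\ref{thm:main_decomposition}. By Proposition~\ref{representability_of_integral_DLVs} the base pieces $X_{c,b}^{\caG^{\rm ad}_{\bfx}}$ and $\dot X_{\bar c,b}^{\caG_{\bfx}}$ are (infinite-dimensional) affine schemes. Each $\gamma$-translate appearing in the coproduct is obtained from the base piece by left multiplication by $\gamma$ inside $L\bfG$, which is an automorphism of the ambient loop space; consequently every component of the coproduct is again an affine scheme, and the whole thing is a disjoint union of affine schemes, as required.

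I do not expect any substantive obstacle here: the entire content of the corollary is packaged inside Theorem~\ref{thm:main_decomposition} together with Propositions~\ref{prop:simplifying_assumptions} and~\ref{representability_of_integral_DLVs}. The only mild subtlety worth flagging in a careful write-up is the separate bookkeeping for a possibly empty $\dot X_{\bar c}(b)$ and the check that left translation by a $k$-rational element genuinely transports one affine sub-scheme of $L\bfG$ to another.
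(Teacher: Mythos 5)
Your proposal is correct and follows exactly the route the paper intends: reduce via Proposition~\ref{prop:simplifying_assumptions} (and the discussion in \S\ref{sec:intro_main}) to the standing hypotheses of Theorem~\ref{thm:main_decomposition}, then read off the disjoint decompositions \eqref{eq:main_iso_1} and \eqref{eq:main_iso_2}, whose pieces are $\gamma$-translates of the affine schemes of Proposition~\ref{representability_of_integral_DLVs}. The paper treats the corollary as immediate from these ingredients, so there is nothing further to add.
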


\subsection{Quasi-split case}\label{sec:quasisplit_statement} If $b$ is $\sigma$-conjugate to $1$, Theorem~\ref{thm:main_decomposition} admits a clearer reformulation. Let $c \in W$ be Coxeter and $\bar c \in \overline F_c\,/\ker\bar\kappa_c$. If $\dot X_{\bar c}(1) \neq \varnothing$, then a (\textit{i.e.}, any) lift $\dot c \in \bfG(\breve k)$ of $\bar c$ is $\sigma$-conjugate to $1$ (by Corollary~\ref{cor:choice_b_cox_over_barw}). Moreover, choosing $\dot c$ appropriately, we may additionally assume that the (unique and automatically hyperspecial) fixed point $\bfx = \bfx_{\dot c}$ of $\dot c \sigma$ on $\caA_{\bfT,\breve k}$ lies in $\caA_{\bfT,k} = \caA_{\bfT,\breve k}^{\langle \sigma \rangle}$ (\textit{cf.}~Section~\ref{sec:quasisplit_proof}). In this situation we have the integral level analogues $X_c^{\caG_{\bfx}}(1)$ and $\dot X_{\dot c}^{\caG_{\bfx}}(1)$ of $X_c(1)$ and $\dot X_{\bar c}(1)$, which were introduced in \cite[Definition~4.1.1]{DudasI_20}; see Section~\ref{sec:review_some_pDL}. In this situation we have the following corollary, proven in Section~\ref{sec:quasisplit_proof}.


\begin{cor}\label{cor:main_decomposition_b1_case}
With notation as above, there are $\bfG(k)$- $($resp.\ $\bfG(k) \times \bfT_c(k)$-$)$equivariant isomorphisms
\[
X_c(1) \cong \coprod_{\gamma \in \bfG^{\ad}(k)/\caG^{\ad}_{\bfx,1}(\caO_k)} \gamma X_c^{\caG_\bfx}(1) \quad \text{ and } \quad \dot X_{\bar c}(1) \cong \coprod_{\gamma \in \bfG(k)/\caG_{\bfx,1}(\caO_k)} \gamma \dot X_{\dot c}^{\caG_\bfx}(1). 
\]
\end{cor}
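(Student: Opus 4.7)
The strategy is to deduce the corollary from Theorem \ref{thm:main_decomposition} via a $\sigma$-conjugation that replaces the parameter $b=1$ by a suitable lift $\dot c \in F_c(\breve k)$ of $\bar c$ that is $\sigma$-conjugate to $1$ and whose unique $\dot c\sigma$-fixed point $\bfx \in \caA_{\bfT, \breve k}$ lies in $\caA_{\bfT, k}$. Such a choice exists by Corollary \ref{cor:choice_b_cox_over_barw} and the remarks preceding the statement. Since $\bfx$ is then simultaneously $\sigma$- and $\dot c\sigma$-fixed, the parahoric $\caG_\bfx$ descends both to a model $\caG_{\bfx, 1}$ of $\bfG$ and to a model $\caG_{\bfx, \dot c}$ of $\bfG_{\dot c}$. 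Moreover $\dot c$ stabilizes $\bfx$, so $\dot c \in \caG_\bfx(\caO_{\breve k})$, and Lang--Steinberg applied to the pro-algebraic group $L^+\caG_\bfx$ furnishes an element $g \in \caG_\bfx(\caO_{\breve k})$ with $g^{-1}\sigma(g) = \dot c$.

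Left multiplication by $g$ then gives $\bfG(k)$- (respectively $\bfG(k)\times\bfT_c(k)$-) equivariant isomorphisms $X_c(1) \cong X_c(\dot c)$ and $\dot X_{\bar c}(1) \cong \dot X_{\bar c}(\dot c)$, under which the induced conjugation $h \mapsto ghg^{-1}$ identifies $\bfG(k) \cong \bfG_{\dot c}(k)$ and --- because $g \in \caG_\bfx(\caO_{\breve k})$ normalizes this pro-group --- also $\caG_{\bfx, 1}(\caO_k) \cong \caG_{\bfx, \dot c}(\caO_k)$. Applying Theorem \ref{thm:main_decomposition} to $X_c(\dot c)$ and $\dot X_{\bar c}(\dot c)$, where $\dot c$ serves as a lift of $\bar c$ meeting the simplifying assumptions of \S\ref{sec:intro_main}, and transporting the resulting decompositions back along $g^{-1}$, yields the desired coproducts indexed by $\bfG^{\rm ad}(k)/\caG^{\rm ad}_{\bfx, 1}(\caO_k)$ and $\bfG(k)/\caG_{\bfx, 1}(\caO_k)$.

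The remaining task is to identify the integral factors $X_{c, \dot c}^{\caG^{\rm ad}_\bfx}$ and $\dot X_{\bar c, \dot c}^{\caG_\bfx}$ appearing on the right of Theorem \ref{thm:main_decomposition} with their $b=1$ counterparts $X_c^{\caG_\bfx}(1)$ and $\dot X_{\dot c}^{\caG_\bfx}(1)$ from \cite[Def.~4.1.1]{DudasI_20}. Unwinding the cartesian diagram \eqref{eq:integral_Coxeter_Stcsss_space}, $\sigma$-conjugation by $g$ converts the Lang map for the twisted Frobenius $\sigma_{\dot c}$ into the Lang map for $\sigma$ together with a right-translation by $\dot c$, matching the shape of the definition in \cite[Def.~4.1.1]{DudasI_20}. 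This comparison of two formally different cartesian descriptions of the Coxeter-type integral Deligne--Lusztig space is the only step that is not purely mechanical, and I expect it to be the main obstacle; it should amount to a short but careful exercise in convention-tracking once the definitions of \S\ref{sec:review_some_pDL} are spelled out.
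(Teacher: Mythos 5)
Your reduction is the same as the paper's: choose $\dot c$ lifting $\bar c$ with $\bfx=\bfx_{\dot c}\in\caA_{\bfT,k}$, produce $h\in\caG_\bfx(\caO_{\breve k})$ by the pro-version of Lang's theorem with $h^{-1}\dot c\sigma(h)=1$, transport the problem to $X_c(\dot c)$, apply Theorem \ref{thm:main_decomposition}, and then match the integral pieces. Two remarks. First, the existence of $\dot c$ with the required properties is not quite a consequence of Corollary \ref{cor:choice_b_cox_over_barw} alone: that corollary produces some $b_1\in[1]\cap F_c(\breve k)$ over $\bar c$, but to get the fixed point into $\caA_{\bfT,k}$ \emph{and} keep $\dot c$ lying over the prescribed class $\bar c$ one has to run the comparison between the two parametrizations of $\fT(\bfG,c)/\Ad\bfG(k)$ (hyperspecial vertices of $\caB_{\bfG,k}$ versus $[1]\cap\overline F_c/\ker\bar\kappa_c$, Proposition \ref{prop:conj_classes_of_tori_basic_case}) and adjust $\dot c$ by an element of $C_W(c\sigma)$. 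This is a real step, not part of the "notation as above".

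The more serious issue is the last paragraph. The identification of $X_{c,\dot c}^{\caG_\bfx^{\rm ad}}$ with $X_c^{\caG_\bfx}(1)$ is \emph{not} convention-tracking, and the translation-by-$h$ trick you describe does not by itself produce a space of the shape of \cite[Def.~4.1.1]{DudasI_20}. After conjugating away the twist you are left with the condition $g^{-1}\sigma(g)\in L^+({}^c\bfU\cap\bfU^-)_\bfx\cdot L^+\caT\cdot\dot c$ modulo $L^+\caT$, whereas $X_c^{\caG_\bfx}(1)$ is cut out by $g^{-1}\sigma(g)\in L^+\bfB_\bfx\cdot\dot c\cdot L^+\bfB_\bfx$ modulo $L^+\bfB_\bfx$. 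Bridging these requires two nontrivial inputs that your sketch omits: (a) the \emph{integral} loop Steinberg cross section, i.e.\ the surjectivity of $\alpha_{b,\bfx}\colon L^+({}^c\bfU\cap\bfU)_\bfx\times L^+({}^c\bfU\cap\bfU^-)_\bfx\to L^+({}^c\bfU)_\bfx$ from the last part of Proposition \ref{prop:loop_Steinbergs_twisted_crosssection}, which is what lets you absorb the extra $L^+({}^c\bfU\cap\bfU)_\bfx$-factor and pass between the cross-section slice and the full integral Bruhat cell; and (b) a Lang-type argument for $L^+\caT$ (as in the proof of Proposition \ref{prop:Xwb_other_description}, using that $\beta_c=0$ in the adjoint case) to replace the quotient by the profinite group $\bfT_c(k)$ in the definition \eqref{eq:integral_Coxeter_Stcsss_quot_space} by the quotient by the loop group $L^+\caT$. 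Without (a) in particular the two cartesian descriptions genuinely differ, so as written your argument has a gap exactly at the point you flagged.
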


\subsection{Two related results}\label{sec:intro_further_topics} In the rest of the introduction we briefly discuss two further themes of this article. First, we review in Section~\ref{sec:conj_classes_tori}, following DeBacker and Reeder \cite{deBacker_06,deBackerR_09,Reeder_11}, the parametrization of rational conjugacy classes of unramified maximal $k$-tori in pure inner forms of $\bfG$. Moreover, in Sections~\ref{sec:rationa_conj_classes_ext_pure_forms}--\ref{sec:fibers_LFw_Fw} we extend their discussion to all extended pure inner forms and give a description of rational conjugacy classes in terms of $\overline F_w\,/\ker\bar\kappa_w$, the discrete parameter space for the spaces $\dot X_{\bar w}(b)$ over $X_w(b)$. In Section~\ref{sec:Coxeter_tori_case} we  investigate in detail the Coxeter case , where (under some mild assumptions on $\bfG$) it turns out that the set of different covers $\dot X_{\bar c}(b)$ of $X_c(b)$ for $b$ basic is in canonical bijection with the set of rational conjugacy classes of unramified Coxeter tori in the inner form $\bfG_b$ of $\bfG$; see Corollary~\ref{cor:tori_and_covers}. Thus the $p$-adic Deligne--Lusztig space $X_c(b)$ ``can see'' the difference between the various rational conjugacy classes of tori whose stable class is given by $c$.

Secondly, in Section~\ref{sec:variant_steinberg_crosssection} we show a loop version of the twisted Steinberg cross section. Steinberg's original result \cite[Proposition~8.9]{Steinberg_65} states roughly that if $\dot c \in F_c(\breve k)$ is any lift of a Coxeter element $c \in W$, any ${}^c\bfU \cap \bfU$-orbit on ${}^c\bfU$ for the action given by $g(u) = g^{-1}u\Ad(\dot c)(g)$ contains precisely one point in ${}^c\bfU \cap \bfU^-$ ($k$ can be replaced by any field here). In \cite[Section~3.14(c)]{HeL_12} this is generalized in various respects, in particular to the case where $k$ is replaced by a finite field $\bF_q$ and the action is Frobenius-twisted, \textit{i.e.}, given by $g(u) = g^{-1}u\sigma_{\dot c}(g)$, where $\sigma$ is the (geometric) Frobenius over $\bF_q$ and $\sigma_{\dot c} = \Ad(\dot c) \circ \sigma$. This can be used to prove affineness of classical Deligne--Lusztig varieties attached to Coxeter elements (for example, by combining \cite[Corollary~1.12]{DeligneL_76} and \cite[Section~4.3]{HeL_12}). In our setup, with $k$ non-archimedean local, (to perform a reduction step in the proof of Theorem~\ref{thm:main_decomposition}) we need the same result for the action of $L({}^c\bfU \cap \bfU)$ on $L({}^c\bfU)$. A part of the proof in \cite{HeL_12} can be adapted, but at some point in \cite{HeL_12} the Ax--Grothendieck theorem\footnote{It states, for an endomorphism $X \rar X$ of a scheme of finite type over an algebraically closed field $\Omega$, that if $X(\Omega) \rar X(\Omega)$ is injective, then it is also surjective.} is used, which is impossible in our setting (as $L\bfU$ is not even a scheme). Therefore, we carry through a different argument, at least for classical groups and special Coxeter elements. 


%

\subsection*{Outline} In Section~\ref{sec:notation_and_setup} we introduce more notation, prove a fact about integral $p$-adic Deligne--Lusztig spaces and review the Borovoi fundamental group and $\sigma$-conjugacy classes. In Section~\ref{sec:conj_classes_tori} we review (and extend) some results on rational/stable conjugacy classes of unramified maximal tori of $\bfG$. In Section~\ref{sec:stuff_on_torsors} we review and extend some results from \cite{Ivanov_DL_indrep} about the maps $\dot X_{\bar w}(b) \rar X_w(b)$. In Section~\ref{sec:variant_steinberg_crosssection} we prove loop (and integral loop) versions of the twisted Steinberg cross section. This is used in Section~\ref{sec:pDL_on_Coxeter_case} to replace $\dot X_{\bar c}(b)$ (and its integral analogue) by a more tractable object. The main part of the article consists of Sections~\ref{sec:pDL_on_Coxeter_case} and~\ref{sec:type_by_type}, where Theorem~\ref{thm:main_decomposition} is proven.

\subsection*{Acknowledgements} I want to thank Johannes Ansch\"utz, Charlotte Chan, Lucas Mann and Peter Scholze for several discussions related to the present work. Especially, I want to thank Charlotte Chan, with whom I initially started to work on Lusztig’s conjecture, and who, in particular, pointed out the reference \cite{HeL_12} to me. I thank Michael Borovoi for pointing out the fact used in the beginning of Section~\ref{sec:proof_main_cover_general_case} to me  and an anonymous referee for making various helpful suggestions.

\section{Setup and preliminaries} \label{sec:notation_and_setup}

Recall that $\bfG$ is an unramified group over the local field $k$. We use the notation from Section~\ref{sec:nota}.

\subsection{More notation}\label{sec:further_notation} Given a group $G$, $g \in G$ and any subset $H \subseteq G$, we write ${}^gH = gHg^{-1}$. For an abelian group $A$, we write $A_{\tors}$ for its torsion subgroup.

Let $p$ be the residue characteristic of $k$. For a ring $R$ of characteristic $p$, denote by $\Perf_R$ the category of perfect algebras over $R$. Denote by $\varpi$ a uniformizer of $k$ and by $\bF_q$ (resp.\ $\obF$) the residue field of $k$ (resp.\ $\breve k$). 

For $R$ in $\Perf_{\bF_q}$, let $\bW(R)$ be the unique (up to unique isomorphism) $\varpi$-adically complete and separated $\caO_k$-algebra in which $\varpi$ is not a zero-divisor and which satisfies $\bW(R)\,/ \varpi\bW(R) = R$. In other words, if $W(R)$ denotes the $p$-typical Witt-vectors of $k$, then $\bW(R) = W(R) \otimes_{W(\bF_q)} \caO_k$ if $\charac k = 0$ and $\bW(R) = R[\![\varpi]\!]$ if $\charac k = p$. Denote by $\phi = \phi_R$ the Frobenius automorphisms of $\bW(R)$, $\bW(R)[1/\varpi]$, induced by $x \mapsto x^q$ on~$R$ (this notation is only used in the proof of Proposition~\ref{prop:loop_Steinbergs_twisted_crosssection} and  Section~\ref{sec:type_by_type}). We have $\bW(\bF_q)[1/\varpi] = k$ and $\bW(\obF)[1/\varpi] = \breve k$ (so that $\phi_{\obF} = \sigma$). 

Denote by $k^{\alg}$ a fixed algebraic closure of $k$ containing the maximal algebraic subextension of $\breve k/k$. Denote by $\langle \sigma \rangle \cong \hat\bZ$ the group of continuous automorphisms of $\breve k/k$.

\subsubsection{Loop functors}\label{sec:loops}
For a $\breve k$-scheme $X$, let 
\[
LX \colon \Perf_{\obF} \longrar \Sets, \quad R \longmapsto X\left(\bW(R)\left[1/\varpi^{-1}\right]\right)
\] 
be the loop space of $X$. If $X$ is affine of finite type over $\breve k$, then $LX$ is representable by an ind-scheme. For an $\caO_{\breve k}$-scheme $\caX$, let 
\[
L^+\caX \colon \Perf_{\obF} \longrar \Sets, \quad R \longmapsto \caX(\bW(R))
\]
be the space of positive loops of $\caX$. Moreover, if $n \geq 1$, let
\[
L^+_n\caX \colon \Perf_{\obF} \longrar \Sets,\quad R \longmapsto \caX(\bW(R)/\varpi^n\bW(R))
\]
be the truncated version of $L^+\caX$. By \cite[Proposition-Definition~6.2]{BertapelleG_18}, $L^+_n\caX$ is a scheme (resp.\ affine scheme) for arbitrary (resp.\ affine) $\caX$, and if $\caX$ is of finite type, then $L^+\caX$ is too. Moreover, $L^+\caX = \prolim_n L^+_n\caX$, along affine transition maps (\textit{cf.} \cite[Proposition~8.5]{BertapelleG_18}). Thus, if $\caX$ is a scheme (resp.\ affine scheme), then $L^+\caX$ is too. 

If $X$ is the base change to $\breve k$ of a $k$-scheme $X_0$, then the functor  $LX$ carries a natural automorphism, the geometric Frobenius, denoted by $\sigma_{X_0}$ (or simply by $\sigma$ if $X_0$ is clear from context). Similarly, we have the geometric Frobenius $\sigma = \sigma_{\caX_0}$ on $L^+\caX$ if the $\caO_{\breve k}$-scheme $\caX$ is the base change of an $\caO_k$-scheme $\caX_0$.

\subsubsection{Groups}\label{sec:notation_groups} 
We write $\bfG^{\ad}$, resp.\ $\bfG^{\sssc}$, for the adjoint group, resp.\ the simply connected cover of the derived group, of $\bfG$. If $\bfH \subseteq \bfG$ is a closed subgroup of $\bfG$, denote by $\bfH^{\ad}$, resp.\ $\bfH^{\sssc}$, its image in $\bfG^{\ad}$, resp.\ its preimage in $\bfG^{\sssc}$. We identify the Weyl groups of $\bfG^{\ad}$ and $\bfG^{\sssc}$ with $W$. We denote by $\bfZ$ the center of $\bfG$.

Write $X_\ast(\bfT)$, resp.\ $X^\ast(\bfT)$, for the free $\bZ$-module of cocharacters, resp.\ characters, of $\bfT$. Denote by $\Phi \subseteq X^\ast(\bfT)$, resp.\ $\Phi^\vee \subseteq X_\ast(\bfT)$, the set of roots, resp.\ coroots, of $\bfG$; denote by $\Phi^+$, resp.\ $\Delta$, the set of positive, resp.\ simple, roots, determined by $\bfB$; denote by $s_{\alpha} \in W$ the reflection attached to $\alpha \in \Phi$. For $\xi \in X_\ast(\bfT)$, denote by $\varpi^\xi$ the image in $\bfT(\breve k)$ of $\varpi$ under $\xi$.

Write $\bfN$ for the normalizer of $\bfT$. We have the reductive $\caO_k$-model $\caT$ of $\bfT$, and $\caT(\caO_{\breve k})$ is the maximal compact subgroup of $\bfT(\breve k)$. The quotient $\widetilde W = \bfN(\breve k)/\caT(\caO_{\breve k})$ is the \emph{extended affine Weyl group} of $\bfT$ in $\bfG$. It is naturally an extension $\widetilde W = W \ltimes X_\ast(\bfT)$,
whose splittings (resp.\ $\sigma$-equivariant splittings) $W \rar \widetilde W$ correspond to hyperspecial points of $\caA_{\bfG,\breve k}$ (resp.\ of $\caA_{\bfG,\breve k}^{\langle \sigma \rangle}$). 

\subsubsection{Twisting Frobenius}\label{sec:twisting_Frobenius} For $b \in \bfG(\breve k)$, we have the $k$-group $\bfG_b$, which is defined as the functorial $\sigma$-centralizer of $b$; \textit{cf.}~\cite[1.12]{RapoportZ_96} or \cite[3.3, Appendix~A]{Kottwitz_97}. Then $\bfG_b(k) = \{g \in \bfG(\breve k) \colon g^{-1}b\sigma(g) = b \}$. If $b$ is basic (\textit{cf.}~Section~\ref{sec:newton_and_sigma_classes}), then $\bfG_b$ is an inner form of $\bfG$, we may identify $\bfG_b(\breve k) = \bfG(\breve k)$ and we denote by $\sigma_b := \Ad(b) \circ \sigma$ the twisted Frobenius on $\bfG(\breve k)$. We also write $\sigma_b$ for the corresponding geometric Frobenius on $L\bfG_b = L\bfG$.

Suppose that the basic element $b$ lies in $\bfN(\breve k)$ and that $\bfx \in \caA_{\bfT,\breve k}$ is fixed by the action of $b\sigma$. Let $\caG_{\bfx}$ be the corresponding parahoric model of $\bfG$ over $\caO_{\breve k}$. Then $\sigma_b$ fixes $\caG_{\bfx}(\caO_{\breve k}) \subseteq \bfG(\breve k)$, and $L^+\caG_{\bfx} \subseteq L\bfG$. Thus $\caG_{\bfx}$ descends to an $\caO_k$-model of $\bfG_b$, which we denote by $\caG_{\bfx,b}$. We have $L^+\caG_{\bfx,b} \otimes_{\bF_q} \obF = L^+\caG_{\bfx}$.

Similarly, for $w \in W$, we have the (outer) $\breve k/k$-form $\bfT_w$ of $\bfT$, satisfying $\bfT_w(k) = \{t \in \bfT(\breve k) \colon \Ad\,w(\sigma(t)) = t\}$. We may identify $\bfT_w(\breve k) = \bfT(\breve k)$, and we denote by $\sigma_w := \Ad(w)\circ \sigma$ the twisted Frobenius on this group, as well as on $L\bfT$.

\subsubsection{Topologies} We work with two topologies on $\Perf_{\bF_p}$: the pro-\'etale topology, \textit{cf.} \cite{BhattS_15}, and the arc-topology, \textit{cf.}  \cite{BhattM_18}. Recall that a map $S' \rar S$ of qcqs schemes is an arc-cover if any immediate specialization in $S$ lifts to $S'$. By \cite[Theorem~5.16]{BhattM_18}, the arc-topology on perfect qcqs $\bF_p$-schemes coincides with the canonical topology. Thus any perfect qcqs scheme is an arc-sheaf. In particular, for any qcqs scheme $\caX/\caO_k$, $L^+\caX$ is an arc-sheaf. For any quasi-projective scheme $X/k$, $LX$ is an arc-sheaf by \cite[Theorem~A]{Ivanov_DL_indrep}. 

\subsubsection{Locally profinite groups}\label{sec:loc_prof_gps} As in \cite[Section~4]{Ivanov_DL_indrep}, we regard a locally profinite group $H$ as a (representable) functor
\[ 
S \longmapsto C^0(|S|, H) 
\]
on $\Perf_{\obF}$. In particular, $\bfG_b(k)$ and $\bfT_w(k)$ are such functors. If $H$ is profinite, this functor is even represented by an affine scheme.

\subsubsection{Galois cohomology}\label{sec:Gal_coh} For the definition and standard facts about non-abelian cohomology, we refer to \cite{Serre_Galois_Cohomology}. If $M$ is a discrete $\langle \sigma \rangle$-group, where $\langle \sigma \rangle$ is as in Section~\ref{sec:further_notation}, we write $H^1(\sigma, M)$ for the corresponding first cohomology set. We identify a continuous $1$-cocycle $a \colon \langle \sigma \rangle \rar M$ with $a(\sigma) \in M$ and write $[a(\sigma)] \in H^1(\sigma, M)$ for the corresponding cohomology class.

We will write $H^1(\sigma,\bfG)$, resp.\ $H^1(\sigma_w,\bfT)$, instead of $H^1(\sigma, \bfG(\breve k))$, resp.\ $H^1(\sigma,\bfT_w(\breve k))$. By a theorem of Steinberg \cite[Theorem~III.1']{Serre_Galois_Cohomology}, resp.\ its extension \cite[Section~8.6]{BorelS_68} if $\charac k >0 $, we have $H^1(\sigma,\bfG) = H^1(k,\bfG)$ and $H^1(\sigma_w,\bfT) = H^1(k,\bfT_w)$ via inflation along $\Gal_k \tar \langle \sigma \rangle$. We will always use the notation $H^1(\sigma,\bfG)$ and $H^1(\sigma_w,\bfT)$ without further comment.

\subsection{Review of some $\boldsymbol{p}$-adic Deligne--Lusztig spaces}\label{sec:review_some_pDL}

\subsubsection{Spaces $\boldsymbol{X_w(b)}$ and $\boldsymbol{\dot X_{\dot w}(b)}$}\label{sec:review_DL_general}
Let $w\in W$, $\dot w \in \bfN(\breve k)$ be a lift of $w$ and $b \in \bfG(\breve k)$. Denote by $\caO(w) \subseteq (\bfG/\bfB)^2$ (resp.\ $\caO(\dot w) \subseteq \bfG/\bfU$) the $\bfG$-orbit corresponding to $w$ (resp.\ $\dot w$) under the Bruhat decomposition.  The \emph{$p$-adic Deligne--Lusztig spaces} $X_w(b)$ and $\dot X_{\dot w}(b)$ were defined in \cite[Definition~8.3]{Ivanov_DL_indrep} by the cartesian diagrams of functors on $\Perf_{\obF}$

\centerline{\begin{tabular}{cc}
\begin{minipage}{2in}
\begin{displaymath}
\leftline{
\xymatrix{
X_w(b) \ar[r] \ar[d] & L\caO(w) \ar[d]\\
L(G/B) \ar[r]^-{(\id,b\sigma)} & L(G/B) \times L(G/B) 
}
}
\end{displaymath}
\end{minipage}
& \qquad\qquad  and \qquad\qquad
\begin{minipage}{2in}
\begin{displaymath}
\leftline{
\xymatrix{
\dot X_{\dot w}(b) \ar[r] \ar[d] & L\dot \caO(\dot w) \ar[d]\\
L(G/U) \ar[r]^-{(\id,b\sigma)} & L(G/U) \times L(G/U)\rlap{,} 
}
}
\end{displaymath}
\end{minipage}
\end{tabular}
}
\noindent where $b\sigma \colon L(G/B) \isor L(G/B)$ and $b$ acts by left multiplication. Then $X_w(b)$ and $\dot X_{\dot w}(b)$ are sheaves for the arc-topology (see \cite{BhattM_18}) on $\Perf_{\obF}$; \textit{cf.} \cite[Corollary~8.4]{Ivanov_DL_indrep}. There is a natural action of $\bfG_b(k)$ on $X_w(b)$ and of $\bfG_b(k) \times \bfT_w(k)$ on $\dot X_w(b)$.

\subsubsection{Spaces $\boldsymbol{\dot X_{\bar c,b}^{\caG_{\bfx}}}$ and $\boldsymbol{X_{c,b}^{\caG_{\bfx}^{\ad}}}$} 
These spaces were introduced in \eqref{eq:integral_Coxeter_Stcsss_space} and \eqref{eq:integral_Coxeter_Stcsss_quot_space}. Here we prove their representability.

\begin{lm}\label{lm:isom_for_replete_topos}
Let $\{H_n\}_{n \geq 1}$ be an inverse system of finite groups, acting compatibly and freely on an inverse system $\{X_n\}_{n\geq 1}$ of pro-\'etale $($resp.\ arc-$)$sheaves on $\Perf_{\obF}$. Write $H = \prolim_{n} H_n$ and $X = \prolim_n X_n$. The natural map $X/H \rar \prolim_n X_n/H_n$ is an isomorphism, where the quotients are formed for the pro-\'etale $($resp.\ arc-$)$topology.  
\end{lm}

\begin{proof}
Consider the case of pro-\'etale topology. The existence of the map is easy. For the injectivity, it is enough to show that the map from the presheaf quotient $R \mapsto X(R)/H(R)$ to $\prolim_n X_n/H_n$ is injective. Let $R \in \Perf_{\obF}$. Suppose $s_1,s_2 \in X(R)$ map to the same element in $(\prolim_n X_n/H_n)(R) = \prolim_n (X_n/H_n)(R)$. Then for each $n$, the images $s_{1,n}$, $s_{2,n}$ of $s_1,s_2$ in $X_n(R)/H_n(R)$ coincide (as any quotient presheaf is automatically separated). Thus for each $n \geq 1$, $S_n = \{h_n \in H_n(R)\colon h_ns_{1,n} = s_{2,n}\} \neq \varnothing$. This set forms an inverse system. By the freeness of the action, we have $\#S_n = 1$, and it follows that $\prolim_n S_n \neq \varnothing$. Then $h \in \prolim_n S_n \subseteq H(R)$ satisfies $hs_1 = s_2$. This proves the injectivity. For the surjectivity, let $s_n \in X_n(R)/H_n(R)$ be a compatible collection of elements. For each $n$, find some pro-\'etale cover $R\rar R_n$ such that $s_n|_{R_n}$ lifts to $\tilde s_n \in X_n(R_n)$. As $R \rar R'=\dirlim_n R_n$ is still a pro-\'etale cover, \textit{cf.} \cite[proof of Lemma~4.1.8]{BhattS_15}, we may replace each $R_n$ with $R'$ and $\tilde s_n$ with $\tilde s_n|_{R'}$. Now, we construct an element $\tilde s\in X(R')$ mapping to $(s_n)_{n\geq 1}$. Starting with $\tilde s_1$ if $n=1$, and working by induction on $n$, suppose that $(\tilde s_i)_{i=1}^n$ is compatible. As the system $(s_n)_{n\geq 1}$ is compatible, we can find some $h_{n+1} \in H_{n+1}(R')$ such that $h_{n+1}\tilde s_{n+1}$ maps to $\tilde s_n$ under $X_{n+1}(R') \rar X_n(R')$. Then replace $\tilde s_{n+1}$ by $h_{n+1}\tilde s_{n+1}$, so that $(\tilde s_i)_{i=1}^{n+1}$ becomes compatible. We conclude by induction. The argument for the arc-topology is the same, using \cite[Corollary~2.18]{BhattM_18} instead of \cite[Lemma~4.1.8]{BhattS_15}
\end{proof}

For $n \geq 1$, let $\dot X_{\bar c,b,n}^{\caG_{\bfx}}$ be defined by the diagram \eqref{eq:integral_Coxeter_Stcsss_space} with $L^+$  replaced by $L^+_n$ everywhere. Then all $\dot X_{\bar c,b,n}^{\caG_{\bfx}}$ are affine schemes and $\dot X_{\bar c,b}^{\caG_{\bfx}} = \prolim_n \dot X_{\bar c,b,n}^{\caG_{\bfx}}$. We also have the truncated version of \eqref{eq:integral_Coxeter_Stcsss_quot_space}: let $\caT$ be the closure of $\bfT$ in $\caG_{\bfx}$ and $\caT_c$ the $\caO_k$-torus, arising by twisting the Frobenius on $\caT$ by $c$. Then put $X_{c,b,n}^{\caG_{\bfx}^{\ad}} := \dot X_{\bar c,b,n}^{\caG_{\bfx}^{\ad}}/T_n$, where $T_n := \caT_c^{\ad}(\caO_k/\varpi^n)$.  

\begin{prop}\label{representability_of_integral_DLVs}
The schemes $\dot X_{\bar c,b}^{\caG_{\bfx}}$ and $X_{c,b}^{\caG_{\bfx}^{\ad}}$ are affine and perfect. Also, $X_{c,b}^{\caG_{\bfx}^{\ad}} = \dot X_{\bar c,b}^{\caG_{\bfx}^{\ad}}/\bfT_c(k)$ remains unchanged if one takes the quotient with respect to the arc-topology instead of the pro-\'etale topology. Moreover, $X_{c,b}^{\caG_{\bfx}^{\ad}} = \prolim_n X_{c,b,n}^{\caG_{\bfx}^{\ad}}$.
\end{prop}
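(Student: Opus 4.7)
I would structure the proof around the already-proved Lemma~\ref{lm:isom_for_replete_topos}, reducing everything to the truncated versions $\dot X_{\bar c,b,n}^{\caG_{\bfx}}$ and $X_{c,b,n}^{\caG_{\bfx}^{\rm ad}}$, where representability is concrete.

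\textbf{Step 1 (representability of $\dot X_{\bar c,b}^{\caG_{\bfx}}$).} For each $n \geq 1$, the scheme $\dot X_{\bar c,b,n}^{\caG_{\bfx}}$ is defined by a cartesian diagram in which all three outer vertices are affine $\obF$-schemes of finite type, by \cite[Prop.-Def.~6.2]{BertapelleG_18} applied to $\caG_{\bfx}$ and to the closure $({}^c\bfU \cap \bfU^-)_{\bfx}$. Hence $\dot X_{\bar c,b,n}^{\caG_{\bfx}}$ is an affine scheme of finite type. Since fiber products commute with cofiltered limits and since $L^+\caG_{\bfx} = \prolim_n L^+_n \caG_{\bfx}$ (and likewise for the embedded unipotent piece) along affine transition morphisms, one has $\dot X_{\bar c,b}^{\caG_{\bfx}} = \prolim_n \dot X_{\bar c,b,n}^{\caG_{\bfx}}$ as functors, and the transition maps inherit affineness. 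A cofiltered limit of affine schemes along affine transition maps is an affine perfect scheme, which handles the first space.

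\textbf{Step 2 (finite-level quotients).} The group $T_n = \caT_c^{\rm ad}(\caO_k/\varpi^n)$ is a finite abstract group acting on the affine finite-type scheme $\dot X_{\bar c,b,n}^{\caG_{\bfx}^{\rm ad}}$; taking invariants on the coordinate ring produces an affine scheme $X_{c,b,n}^{\caG_{\bfx}^{\rm ad}} = \dot X_{\bar c,b,n}^{\caG_{\bfx}^{\rm ad}} / T_n$. Because the action is free (see Step 4), this scheme quotient represents the quotient both for the pro-\'etale and for the arc-topology, and the transition maps $X_{c,b,n+1}^{\caG_{\bfx}^{\rm ad}} \to X_{c,b,n}^{\caG_{\bfx}^{\rm ad}}$ remain affine.

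\textbf{Step 3 (passage to the limit).} Since $c$ is Coxeter, the $k$-torus $\bfT_c^{\rm ad}$ is anisotropic (the Coxeter element has no fixed coweight), so $\bfT_c^{\rm ad}(k) = \caT_c^{\rm ad}(\caO_k) = \prolim_n T_n$. Lemma~\ref{lm:isom_for_replete_topos}, applied to the inverse system $\{\dot X_{\bar c,b,n}^{\caG_{\bfx}^{\rm ad}}\}$ with the free action of $\{T_n\}$, yields canonically for both the pro-\'etale and the arc-topology
\[
\dot X_{\bar c,b}^{\caG_{\bfx}^{\rm ad}} / \bfT_c^{\rm ad}(k) \;\xrightarrow{\sim}\; \prolim_n \dot X_{\bar c,b,n}^{\caG_{\bfx}^{\rm ad}} / T_n \;=\; \prolim_n X_{c,b,n}^{\caG_{\bfx}^{\rm ad}}.
\]
The left-hand side is by definition $X_{c,b}^{\caG_{\bfx}^{\rm ad}}$; so the two topologies produce the same quotient, and the limit on the right is again an affine perfect scheme by Step 2.

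\textbf{Main obstacle.} The only non-formal input is the freeness of the $T_n$-action on $\dot X_{\bar c,b,n}^{\caG_{\bfx}^{\rm ad}}$, required both for forming the scheme quotient in Step 2 and for invoking Lemma~\ref{lm:isom_for_replete_topos}. I would deduce this from the freeness of the $\bfT_c^{\rm ad}(k)$-action on $\dot X_{\bar c}(b)$ (a property of the torsor structure recalled in \S\ref{sec:stuff_on_torsors}, cf.\ \cite{Ivanov_DL_indrep}) together with anisotropy: any nontrivial stabilizer at some finite level would lift, via $\bfT_c^{\rm ad}(k) = \prolim_n T_n$ and pro-\'etale local surjectivity of the transition maps of $\{\dot X_{\bar c,b,n}^{\caG_{\bfx}^{\rm ad}}\}$, to a nontrivial stabilizer on $\dot X_{\bar c,b}^{\caG_{\bfx}^{\rm ad}} \subseteq \dot X_{\bar c}(b)$, contradicting the generic freeness.
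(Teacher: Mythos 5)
Your proposal follows the same route as the paper's proof: reduce to the truncations, form the finite-level quotients by $T_n$ as affine schemes using freeness of the action, and pass to the limit with Lemma~\ref{lm:isom_for_replete_topos} (the paper invokes \cite[Tag 07S7]{StacksProject} and the fact that a free action of a finite group makes the fppf quotient an \'etale torsor, which is the same content as your invariant-ring construction). Steps 1--3 are all fine, including the observation that $\bfT_c^{\rm ad}(k)=\caT_c^{\rm ad}(\caO_k)=\prolim_n T_n$ by anisotropy of a Coxeter torus.

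The one genuine flaw is your argument for the ``main obstacle,'' the freeness of the $T_n$-action on $\dot X_{\bar c,b,n}^{\caG_{\bfx}^{\rm ad}}$. Lifting a nontrivial $t_n\in T_n$ with $x_nt_n=x_n$ to some $t\in\bfT_c^{\rm ad}(k)$ and lifting $x_n$ to a point $x$ at infinite level does not produce a fixed point upstairs: $x$ and $xt$ are merely two lifts of $x_n$, and nothing forces them to coincide, so no contradiction with freeness at infinite level is obtained. Fortunately the fact you need is immediate and requires no descent from infinite level: $\dot X_{\bar c,b,n}^{\caG_{\bfx}^{\rm ad}}$ is a subfunctor of the group $L^+_n\caG_{\bfx}^{\rm ad}$ and $T_n$ acts by right translation $g\mapsto gt$, so $gt=g$ forces $t=1$ by cancellation in the group. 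Replacing your lifting argument by this one-line observation makes the proof complete and essentially identical to the paper's.
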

\begin{proof}
The claim for $\dot X_{\bar c,b}^{\caG_{\bfx}}$ follows from the definition and the affineness of $L^+\caX$ for an affine scheme $\caX/\caO_{\breve k}$; \textit{cf.}~Section~\ref{sec:loops}. Suppose that $\bfG$ is adjoint. For $n \geq 1$, let $T_n = \caT_c^{\ad}(\caO_k/\varpi^n)$ be as above. Note that the finite group $T_n$ acts freely on the affine scheme $\dot X_{\bar c,b,n}^{\caG}$ and we have $X_{c,b}^{\caG} = (\dot X_{\bar c,b}^{\caG}/T)_{\proet} = \prolim_n (\dot X_{\bar c,b,n}^{\caG}/T_n)_{\proet}$ by Lemma~\ref{lm:isom_for_replete_topos}. By \cite[Tag 07S7]{StacksProject}, the fppf-quotient $(\dot X_{\bar c,b,n}^{\caG}/T_n)_{\fppf}$ is a scheme, which is even affine (since $\dot X_{\bar c,b,n}^{\caG}$ is), and $\dot X_{\bar c,b,n}^{\caG} \rar (\dot X_{\bar c,b,n}^{\caG}/T_n)_{\fppf}$ is an fppf-torsor. As $T_n$ is finite and acts freely on $\dot X_{\bar c,b,n}^{\caG}$, this is even an \'etale torsor. From this, it follows that $(\dot X_{\bar c,b,n}^{\caG}/T_n)_{\fppf} = (\dot X_{\bar c,b,n}^{\caG}/T_n)_{\et}$, the quotient for the \'etale topology. In particular, the latter is an affine perfect scheme, hence already a pro-\'etale and even an arc-sheaf. This implies that the sheafification of the quotient for the arc- and pro-\'etale topologies agree, as well as the fact that $X_{c,b}^{\caG}$, being the inverse limit of affine perfect schemes, is itself one. 
\end{proof}

\begin{rem}
The map $g \mapsto g^{-1}\sigma_b(g) \colon L^+\caG_{\bf x} \rar L^+\caG_{\bf x}$ is pro-\'etale and---by Lang's theorem applied to all truncations---also surjective. Thus $\dot X_{\bar c,b}^{\caG_{\bfx}}$ and $X_{c,b}^{\caG_{\bfx}^{\ad}}$ and their truncations are always non-empty. Also, it follows that $\dot X_{\bar c,b}^{\caG_{\bfx}}$ and $X_{c,b}^{\caG_{\bfx}^{\ad}}$ are infinite-dimensional. The truncations are perfections of finite-dimensional schemes. More precisely, as the Lang map is \'etale, we have
\[ \dim \dot X_{\bar c,b,n}^{\caG_{\bfx}} = \dim X_{c,b,n}^{\caG_{\bfx}^{\ad}} = \dim L^+_n({}^c\bfU \cap \bfU^-)_{\bf x} = \ell(c)n,
\]
where $\ell(c)$ denotes the length of the Coxeter element $c \in W$.
\end{rem}

\begin{conj}
The scheme $X_{c,b}^{\caG_{\bfx}^{\ad}}$ is an $\bA^\infty$-bundle over a classical Deligne--Lusztig variety attached to the reductive quotient of the special fiber of $\caG_{\bfx}$.
\end{conj}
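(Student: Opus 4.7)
The plan is to exploit the congruence filtration $L^+\caG_\bfx = \prolim_n L^+_n\caG_\bfx$. By Proposition \ref{representability_of_integral_DLVs} we have $X^{\caG_\bfx^{\rm ad}}_{c,b} = \prolim_n X^{\caG_\bfx^{\rm ad}}_{c,b,n}$, so it is enough to establish two things: (i) the bottom level $X_{c,b,1}^{\caG_\bfx^{\rm ad}}$ is (the perfection of) a classical Deligne--Lusztig variety for the reductive quotient $\overline{\caG}_\bfx^{\rm red}$ of the special fiber of $\caG_\bfx$ equipped with its induced Frobenius $\bar\sigma_b$; (ii) each transition map $X_{c,b,n+1}^{\caG_\bfx^{\rm ad}} \to X_{c,b,n}^{\caG_\bfx^{\rm ad}}$ is a Zariski-locally trivial $\bA^{\ell(c)}$-bundle.

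For (i), reduction modulo $\varpi$ intertwines the Lang map $g \mapsto g^{-1}\sigma_b(g)$ on $L^+\caG_\bfx^{\rm ad}$ with its analogue on the reductive quotient, and sends $L^+_1({}^c\bfU \cap \bfU^-)_\bfx$ onto the corresponding intersection ${}^{\bar c}\bar\bfU \cap \bar\bfU^-$ inside $\overline{\caG}_\bfx^{\rm red}$. Since the $T_1 = \caT_c^{\rm ad}(\bF_q)$-quotient commutes with this reduction, one obtains the desired identification with a classical Deligne--Lusztig variety, provided the induced element $\bar c$ is Coxeter in the Weyl group of $\overline{\caG}_\bfx^{\rm red}$ with respect to $\bar\sigma_b$. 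This last point should follow from the fact that $\bfx$ is the unique $\sigma_b$-fixed point on $\caA_{\bfT,\breve k}$ combined with the type-by-type information developed in \S\ref{sec:type_by_type}.

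For (ii), the congruence kernel $K_n = \ker(L^+_{n+1}\caG_\bfx \to L^+_n\caG_\bfx)$ is canonically isomorphic as a perfect $\obF$-group scheme to the Lie algebra $\bar{\mathfrak{g}}_\bfx$ of $\overline{\caG}_\bfx$, and the Lang map restricted to $K_n$ reduces to a twisted Artin--Schreier operator of the form $X \mapsto -X + \Ad(\bar u^{-1})\bar\sigma_b(X)$ at a chosen base point $\bar u$. The fiber of the transition map is then cut out of $K_n$ by the requirement that this image lie in $\bar{\mathfrak{u}}^{\bar c} := {}^{\bar c}\bar{\mathfrak{u}} \cap \bar{\mathfrak{u}}^-$; a dimension count, using surjectivity of twisted Artin--Schreier on the quotient $\bar{\mathfrak{g}}_\bfx / \bar{\mathfrak{u}}^{\bar c}$ together with the expected dimension $\ell(c)$ from Remark following Proposition \ref{representability_of_integral_DLVs}, identifies each fiber with (the perfection of) $\bA^{\ell(c)}$. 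The $T_n$-action is free and compatible with the tower, so the same description descends after quotienting.

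The main obstacle will be upgrading this fiberwise ``$\bA^{\ell(c)}$'' statement to a genuine Zariski-local triviality of each transition. The natural tool is (a truncated form of) the loop Steinberg cross section proved in \S\ref{sec:variant_steinberg_crosssection}, which should produce compatible sections of the Lang map on $L^+_n({}^c\bfU \cap \bfU^-)_\bfx$ and simultaneously trivialize the whole tower, yielding the asserted $\bA^\infty$-bundle structure. A secondary subtlety is pinning down the precise Coxeter element on the reductive quotient when $\bfx$ is not hyperspecial (since then $c$ need not belong to the smaller Weyl group of $\overline{\caG}_\bfx^{\rm red}$); in such cases a slight reformulation of the conjecture, in terms of an appropriate Levi subgroup of $\overline{\caG}_\bfx^{\rm red}$, may be necessary.
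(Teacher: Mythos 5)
First, a point of order: the statement you are addressing is stated in the paper as a \emph{conjecture} --- no proof is given there beyond the remark that it can be checked by hand for $\bfG = {\bf GL}_n$ via \cite{CI_ADLV} --- so there is no argument of the paper to compare against, and your outline must stand on its own. As it stands, it has two genuine gaps, the first of which is your step (i). In the setting of the conjecture $\bfx$ is the unique fixed point of $b\sigma$ on $\caA_{\bfT,\breve k}$, and for a nontrivial basic class this point is typically \emph{not} hyperspecial (e.g.\ for ${\bf GL}_n$ and $b=\dot c_\kappa$ with $\gcd(\kappa,n)=1$ it is the barycenter of an alcove, so $\caG_\bfx$ is an Iwahori). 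Then the reductive quotient of the special fiber has a strictly smaller root system, $c$ does not lie in its Weyl group, and the special fiber of $({}^c\bfU\cap\bfU^-)_\bfx$ maps to that reductive quotient with positive-dimensional kernel. Consequently $X_{c,b,1}^{\caG_{\bfx}^{\rm ad}}$ is \emph{not} a classical Deligne--Lusztig variety of the reductive quotient: in the Iwahori example the latter is finite while $\dim X_{c,b,1}^{\caG_{\bfx}^{\rm ad}} = \ell(c)$. What you defer as a ``secondary subtlety'' is therefore the heart of the matter: already the bottom layer must be exhibited as an affine-space bundle over the classical variety, which requires identifying the correct twisted Coxeter element downstairs and a Steinberg-type factorization of the special fiber of $({}^c\bfU\cap\bfU^-)_\bfx$; none of this is supplied.

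The second gap is in step (ii). The operator $L(X) = -\Ad(\bar u^{-1})X + \bar\sigma_b(X)$ on the congruence quotient is surjective with \emph{finite} kernel $K$ of order $q^{\dim\bfG}$, so the fiber of $\dot X^{\caG_{\bfx}}_{\bar c,b,n+1}\to\dot X^{\caG_{\bfx}}_{\bar c,b,n}$ over a geometric point is $L^{-1}(\mathfrak{n}-Y)$ for the $\ell(c)$-dimensional subspace $\mathfrak{n} = \mathrm{Lie}$ of the special fiber of $({}^c\bfU\cap\bfU^-)_\bfx$: this is a $K$-torsor over $\bA^{\ell(c)}$, i.e.\ a disjoint union of $[K:K\cap L^{-1}(\mathfrak{n})^{\circ}]$ copies of $\bA^{\ell(c)}$, and in general that index exceeds $1$ (take $\mathfrak{n}=0$ to see the extreme case). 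A dimension count cannot detect this, and passing to the $T_{n+1}$-quotient removes only a factor of size $\#\ker(T_{n+1}\to T_n)$, so the connectedness of the fibers of $X_{c,b,n+1}^{\caG_{\bfx}^{\rm ad}}\to X_{c,b,n}^{\caG_{\bfx}^{\rm ad}}$ still requires an argument --- this is precisely where the explicit computation for ${\bf GL}_n$ in \cite{CI_ADLV} does real work. Finally, Proposition \ref{prop:loop_Steinbergs_twisted_crosssection} concerns the map $(x,y)\mapsto x^{-1}y\sigma_b(x)$ on unipotent loop groups; it does not by itself produce sections of the Lang map $g\mapsto g^{-1}\sigma_b(g)$ on $L^+_n\caG_\bfx$, so the asserted trivialization of the tower does not follow from it without further input. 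In short, your strategy is the expected one, but the proposal does not close the conjecture.
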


This is easily verified by hand for $\bfG = {\bf GL}_n$, using the explicit description in \cite{CI_ADLV}.

\subsubsection{Spaces $\boldsymbol{X_w^{\caG}(1)}$, $\boldsymbol{\dot X_{\dot w}^{\caG}(1)}$} 
These spaces only appear in Sections~\ref{sec:quasisplit_statement} and~\ref{sec:quasisplit_proof}. Suppose $\bfx \in \caA_{\bfT,\breve k}^{\langle\sigma\rangle}$ is a hyperspecial point, and let $\caG = \caG_{\bfx}$ be the corresponding reductive $\caO_{\breve k}$-model of $\bfG$. The reductive $\caO_k$-group $\caG_{\bfx, 1}$ is quasi-split, and the closure $\bfB_{\bfx}$ of $\bfB$ in $\caG$ is an $\caO_k$-rational Borel subgroup. Moreover, $\caG$ has a Bruhat decomposition, in the sense that $(\caG/\bfB_{\bfx})^2 \otimes_{\caO_k} \caO_{\breve k}$ contains locally closed subschemes $\caO(w)$ ($w \in W$), flat over $\caO_{\breve k}$, which fiberwise induce the usual Bruhat decomposition. If $\bfN_{\bfx}, \bfU_{\bfx}$ are the closures of $\bfN,\bfU$ in $\caG$, one similarly has $\caO(\dot w) \subseteq (\caG/\bfU_{\bfx})^2 \otimes_{\caO_k} \caO_{\breve k}$ ($\dot w \in \bfN_{\bfx}(\caO_{\breve k})$). Taking $b=1$, $\dot w \in \bfN_{\bfx}(\caO_{\breve k})$ and replacing $\bfG,\bfB,\bfU$ with $\caG,\bfB_{\bfx},\bfU_{\bfx}$ and $L$ with $L^+$ everywhere in the diagrams in Section~\ref{sec:review_DL_general}, we obtain integral $p$-adic Deligne--Lusztig spaces $X_w^{\caG}(1)$ and $\dot X_{\dot w}^{\caG}(1)$, as introduced in \cite[Definition~4.1.1]{DudasI_20}. They are acted on by $\caG_{\bfx,1}(\caO_k)$ and $\caG_{\bfx,1}(\caO_k) \times \caT(\caO_k)$, respectively.

\subsection{Fundamental group and the Kottwitz map}\label{sec:sheaf_Kottwitz_map}

The natural map $\bfT^{\sssc} \rar \bfT$ induces an injection $X_\ast(\bfT^{\sssc}) \rar X_\ast(\bfT)$, and we identify $X_\ast(\bfT^{\sssc})$ with its image. The fundamental group of $\bfG$ in the sense of Borovoi \cite{Borovoi_98} is the group 
\[
\pi_1(\bfG) = X_\ast(\bfT)/X_\ast(\bfT^{\sssc}). 
\] 
It is independent of the choice of $\bfT$ and functorial in $\bfG$, the induced $W$-action on it is trivial, and it admits a Frobenius action, which we denote by $\sigma$. Kottwitz constructed a homomorphism $\tilde\kappa_\bfG \colon \bfG(\breve k) \rar \pi_1(\bfG)$; see \cite[Section~7]{Kottwitz_97} and \cite[Section~2.a.2]{PappasR_08}. Reformulated, a classical result of Kottwitz \cite[Section~6]{Kottwitz_84} states that $\tilde\kappa_\bfG$ induces an isomorphism $H^1(\sigma,\bfG) \cong \pi_1(\bfG)_{\langle\sigma\rangle, {\tors}}$. For tori, we have $\pi_1(-) = X_\ast(-)$. For any $w\in W$, any $k$-rational embedding $\bfT_w \rar \bfG$ induces the map
\[
X_\ast(\bfT)_{\langle \sigma_w \rangle, {\tors}} = H^1(\sigma_w, \bfT) \longrar H^1(\sigma,\bfG) = \pi_1(\bfG)_{\langle\sigma\rangle,{\tors}}, 
\]
which is obtained from $X_\ast(\bfT) \rar \pi_1(\bfG)$ by taking $\sigma_w$- (resp.\ $\sigma$-)invariants and passing to the torsion subgroup. From the definition of $\pi_1(\bfG)$ we deduce an exact sequence
\begin{equation}\label{eq:def_of_beta_w}
X_\ast(\bfT^{\sssc})_{\langle\sigma_w\rangle} \stackrel{\beta_w}{\longrar}  X_\ast(\bfT)_{\langle\sigma_w\rangle} \longrar \pi_1(\bfG)_{\langle \sigma\rangle} \longrar 0.
\end{equation}
In particular, 
\begin{equation}\label{eq:ker_of_coho}
\ker\left(H^1(\sigma_w, \bfT) \longrar H^1(\sigma,\bfG)\right) = \im\left(\beta_w\right)_{\tors}
\end{equation}

\begin{rem}\label{rem:beta_w_0_adjoint_elliptic}
If $\bfG$ is adjoint and $w$ Coxeter, then $\beta_w = 0$ and (equivalently) $X_\ast(\bfT)_{\langle \sigma_w \rangle} \cong \pi_1(\bfG)_{\langle \sigma  \rangle}$. For split groups, this follows from \cite[Corollary~8.3]{Springer_74}. In the non-split case, note that there is some $n\geq 1$ such that $\sigma_w^n = \sigma_{w_{\spl}}$ as automorphisms of $X_{\ast}(\bfT)$ and $X_{\ast}(\bfT^{\Sc})$, where $w_{\spl}$ is some (non-twisted) Coxeter element in $W$ and $\sigma_{w_{\spl}} := \Ad(w_{\spl}) \circ \sigma^n$ is formed with respect to the base change of $\bfG$ to the unramified extension of $k$ of degree $n$. Then, as automorphisms of $X_{\ast}(\bfT)$ and of $X_{\ast}(\bfT^{\Sc})$, $\sigma_{w_{\spl}} - 1$ factors through $\sigma_w - 1$, so that we have a surjection $X_\ast(\bfT)_{\langle\sigma_w\rangle} \tar X_\ast(\bfT)_{\langle\sigma_{w_{\spl}} \rangle}$ and a similar surjection for $X_\ast(\bfT^{\Sc})$. These surjections form a commutative square with the maps $\beta_w$ and $\beta_{w_{\spl}}$ (as both are induced by the inclusion $X_\ast(\bfT^{\Sc}) \rar X_\ast(\bfT)$). As $\beta_{w_{\spl}} = 0$ by the split case, it follows that also $\beta_w = 0$.

\end{rem}

By \cite[Section~5]{PappasR_08} we may reinterpret $\tilde\kappa_\bfG$ as a map of ind-schemes $\tilde\kappa_\bfG \colon L\bfG \rar \pi_1(\bfG)$, where we regard $\pi_1(\bfG)$ as a discrete perfect scheme via Section~\ref{sec:loc_prof_gps}. Passing to $\sigma$-coinvariants gives a surjective map
\begin{equation}\label{eq:bar_Kottwitz_map}
\kappa_\bfG \colon L\bfG \longrar \pi_1(\bfG)_{\langle \sigma \rangle}.
\end{equation}

Finally, let us note that for $w \in W$, we have the exact sequence
\begin{equation}\label{eq:torus_sequence_kottwitz_map}
1 \longrar \bfT_w(k) \longrar L\bfT \stackrel{\sigma_w - 1}{\longrar} L\bfT \stackrel{\kappa_w}{\longrar} X_\ast(\bfT)_{\langle \sigma_w \rangle} \longrar 1,
\end{equation}
where we write $\kappa_w$ instead of $\kappa_{\bfT_w}$. Moreover, $\kappa_w$ factors through the natural projection $\bar\kappa_w \colon X_\ast(\bfT) \rar X_\ast(\bfT)_{\langle \sigma_w\rangle}$.

\subsection{Newton map and $\boldsymbol{\sigma}$-conjugacy classes}\label{sec:newton_and_sigma_classes}

Let $\ff \in \Perf_{\bF_q}$ be an algebraically closed field, so that $L = \bW(\ff)[1/\varpi]$ is an extension of $\breve k$. Then $L/k$ is equipped with the Frobenius $\sigma$ lifting that of $\ff/\bF_q$ and $L^\sigma = k$. In \cite{Kottwitz_85} the set 
\[ 
B(\bfG) = H^1(\sigma^\bZ, \bfG(L)) \cong \bfG(L)/\Ad\nolimits_\sigma \bfG(L) 
\] 
of $\sigma$-conjugacy classes of $\bfG(L)$ in considered, where $x, y$ are $\sigma$-conjugate if there is some $g \in \bfG(L)$ with $g^{-1}x\sigma(g) = y$. Denote the $\sigma$-conjugacy class of $b \in \bfG(L)$ by $[b]$ (or $[b]_\bfG$ if 
ambiguity is possible). The set $B(\bfG)$ is independent of the choice of $\ff$. To a $\sigma$-conjugacy class $[b]$, one can attach the \emph{Newton point} $\nu_b \in (W\backslash X_\ast(\bfT)_{\bQ})^{\langle \sigma \rangle}$. Moreover, the Kottwitz map $\kappa_\bfG(\ff)$ from \eqref{eq:bar_Kottwitz_map} factors through a map $\kappa_\bfG \colon B(\bfG) \tar \pi(\bfG)_{\langle \sigma \rangle}$. Then the map 
\begin{equation}\label{eq:newton_and_kottwitz}
(\nu, \kappa_\bfG) \colon B(\bfG) \longrar (W\backslash X_\ast(\bfT)_{\bQ})^{\langle \sigma \rangle} \times \pi_1(\bfG)_{\langle \sigma \rangle}
\end{equation}
is injective. An element $b\in \bfG(L)$ is called \emph{basic} if $\nu_b$ is central in $\bfG$. We denote by $B(\bfG)_{\bas} \subseteq B(\bfG)$ the subset of basic classes. There is a canonical injection $H^1(\sigma,\bfG) \har B(\bfG)_{\bas}$. In \cite{Kottwitz_85}, the isomorphism $H^1(\sigma,\bfG) \cong \pi_1(\bfG)_{\langle\sigma\rangle, {\tors}}$ was extended to the natural commutative diagram
\begin{equation}\label{eq:H1_and_basic_and_fundamental_group}
\xymatrix{
& H^1(\sigma,\bfG) \ar[r]^(.45){\sim} \ar@{^(->}[d] & \pi_1(\bfG)_{\langle \sigma \rangle,\tors} \ar@{^(->}[d] \\
& B(\bfG)_{\bas} \ar[r]^{\sim} & \pi_1(\bfG)_{\langle \sigma \rangle},}
\end{equation}
where the lower map is the restriction of $\kappa_{\bfG}$ to $B(\bfG)_{\bas}$.

We recall how to compute the Newton point $\nu_b$ of $b \in \bfN(L)$; see \textit{e.g.} \cite[Remark~2.3(iii)]{RapoportV_14}. Let $\tilde w \in \widetilde W$ be the image of $b$ under $\bfN(L) \tar \widetilde W$. There is an integer $d > 0$ such that $\sigma^d$ acts trivially on $\widetilde W$ and such that $\mu = \tilde w \sigma(\tilde w) \dots \sigma^{d-1}(\tilde w) \in X_\ast(\bfT)$. Then $\nu_b$  is the image of $d^{-1}\mu$ in $W\backslash X_\ast(\bfT)_\bQ$. In particular, $\nu_b$ only depends on $\tilde w$, and it makes sense to define $\nu_{\tilde w} = \nu_b$. Moreover, we call $\tilde w$ basic if $\nu_{\tilde w}$ is central.

Finally, we recall that any $\sigma$-conjugacy class has a representative in $\bfN(\breve k)$ and that the induced surjection $\bfN(\breve k) \tar B(\bfG)$ factors through $\widetilde W \tar B(\bfG)$; \textit{cf.}~\cite[Corollary 7.2.2]{GortzHKR_10} for split groups and \cite[Theorem~3.7]{He_14} in general. In particular, the Kottwitz homomorphism induces a map $\bar\kappa_\bfG \colon \widetilde W \rar \pi_1(\bfG)_{\langle \sigma \rangle}$.

\section{Rational and stable conjugacy classes of unramified tori}\label{sec:conj_classes_tori}

Recall that two maximal $k$-tori $\bfT_1,\bfT_2$ in $\bfG$ are \emph{stably conjugate} if there exists a $g \in \bfG(k^{\alg})$ such that ${}^g(\bfT_1(k)) = \bfT_2(k)$. If two maximal $k$-tori are rationally conjugate, they are also stably conjugate. Thus any stable conjugacy class breaks up into a (finite) union of rational conjugacy classes. We review from \cite{deBacker_06,deBackerR_09,Reeder_11} how this happens for unramified tori of pure inner forms of $\bfG$. Then we investigate the case of \emph{extended} pure inner forms of $\bfG$. We wish to point out that the description from \cite{deBacker_06} in terms of the Bruhat--Tits building covers this case, but we are interested in a slightly different description, which is closely related to the spaces $\dot X_{\bar w}(b)$.

\subsection{Stable conjugacy classes} \label{sec:stable_classes_tori}

For $b \in \bfN(\breve k)$, the twisted Frobenius $\sigma_b = \Ad\,b \circ \sigma$ of $\bfG(\breve k)$ induces one on  $W$. Now suppose  $b \in \bfN(\breve k)$ is basic. By \cite[Lemma 4.3.1]{deBacker_06}, there is a natural injection from the set of stable conjugacy classes of unramified maximal $k$-tori in $\bfG_b$ into $H^1(\sigma_b,W)$, which is even a bijection when $\bfG_b$ is quasi-split (\textit{e.g.}, $[b] = [1]$). Identifying $H^1(\sigma_b,W)\isor H^1(\sigma,W)$ via the natural twisting map $x \mapsto xb$ on cocycles (using \cite[Proposition~I.35\,bis]{Serre_Galois_Cohomology}), we get
\begin{equation}\label{eq:stable_classes}
\{\text{stable conjugacy classes of $k$-tori in $\bfG_b$}\} \longhar H^1(\sigma,W),
\end{equation}
mapping the class of $\bfT (\subseteq \bfG_b)$ to the image of $b$ in $H^1(\sigma,W)$.

\begin{Def}\label{def:stable_conj_classes_tori}
For $b\in \bfN(\breve k)$ basic and $w \in W$, denote by $\fT(\bfG_b, w)$ the \emph{stable conjugacy class} of all unramified maximal $k$-tori in $\bfG_b$, which corresponds to $[w] \in H^1(\sigma,W)$ under \eqref{eq:stable_classes}.
\end{Def}

The elements of $\fT(\bfG_b,w)$ are precisely the images of $k$-embeddings $\bfT_w \rar \bfG_b$. 

\begin{rem}\label{rem:shape_of_tori_in_a_stable class}
We recall the construction from the proof of \cite[Lemma 4.3.1]{deBacker_06}. Any unramified maximal $k$-torus of $\bfG_b$ is of the form ${}^g\bfT$ for some $g \in \bfG_b(\breve k)$. Moreover, ${}^g\bfT$ is $k$-rational if and only if  $\sigma_b({}^g\bfT) = {}^g\bfT$, which is equivalent to $g^{-1}b\sigma(g) \in \bfN(\breve k)$. In that case, if $w \in W$ is the image of $g^{-1}b\sigma(g)$, then ${}^g\bfT \in \fT(\bfG_b,w)$. Replacing $g$ by $g\dot v$ for any lift $\dot v\in \bfN(\breve k)$ of some $v \in W$ has the effect of replacing $w$ by $v^{-1}w\sigma(v)$.
\end{rem}

\subsection{Rational conjugacy classes for pure inner forms} \label{sec:rational_classes_pure_forms}

It is easy to see that the surjection $\bfN(\breve k) \tar \widetilde W$ induces an isomorphism\footnote{Indeed, one can use \cite[Corollary~2~to~Proposition~I.39 and Corollary~to~Proposition~I.41]{Serre_Galois_Cohomology} along with the pro-version of Lang's theorem for the group $\caT(\caO_{\breve k}) = \ker(\bfN(\breve k) \tar \widetilde W)$ with varying rational structures given by $\sigma_b$ ($b\in \bfN(\breve k)$).} $H^1(\sigma, \bfN(\breve k)) \cong H^1(\sigma, \widetilde W)$.

The following is a consequence of \cite[Proposition~6.1]{Reeder_11};\footnote{In fact, \cite{Reeder_11} works with $\Gal_k$-cohomology and \emph{all} maximal $k$-tori. To deduce our statements, one can run literally the same arguments with $\langle \sigma \rangle$ replacing $\Gal_k$ and unramified $k$-tori replacing all $k$-tori.} see also \cite[Section~2]{deBackerR_09} and \cite[Section~4.3]{deBacker_06}. Suppose that $[b] \in B(\bfG)_{\bas}$ comes from a class $\xi \in H^1(\sigma,\bfG)$ (\textit{cf.}~\eqref{eq:H1_and_basic_and_fundamental_group}), \textit{i.e.}, $\bfG_b$ is a pure inner form of $\bfG$. Then rational conjugacy classes of unramified maximal $k$-tori in $\bfG_b$ are in bijection with preimages of $\xi$ under the natural map $H^1(\sigma,\widetilde W) \isor H^1(\sigma, \bfN(\breve k)) \rar H^1(\sigma,\bfG)$; \textit{cf.}~\cite[Lemma 6.2]{Reeder_11} (and use twisting). There are maps
\begin{equation}\label{eq:stabe_rational_classes}
H^1(\sigma,W) \stackrel{\;q_0}{\longleftarrow} H^1(\sigma,\widetilde W) \stackrel{\pi_0}{\longrightarrow} H^1(\sigma,\bfG).
\end{equation}
By Section~\ref{sec:stable_classes_tori}, given $[w] \in H^1(\sigma, W)$, represented by $w \in W$, and $\xi \in H^1(\sigma,\bfG)$, represented by $b \in \bfN(\breve k)$, the set $\fT(\bfG_b,w)/\Ad \bfG_b(k)$ of rational conjugacy classes in $\fT(\bfG_b,w)$ is in bijection with $\pi_0^{-1}(\xi) \cap q_0^{-1}([w])$. In particular, $[w]$ is in the image of \eqref{eq:stable_classes} if and only if $\pi_0^{-1}(\xi) \cap q_0^{-1}([w]) \neq \varnothing$.

\subsection{Rational conjugacy classes for extended pure inner forms}
\label{sec:rationa_conj_classes_ext_pure_forms}

We wish to extend \eqref{eq:stabe_rational_classes} to all extended pure inner forms. We have $X_\ast(\bfZ) \subseteq X_\ast(\bfT) \subseteq \widetilde W$. Define 
\[
\widetilde Z^1(\sigma, \widetilde W) = \left\{ \tilde w \in \widetilde W\colon \nu_{\tilde w} \in X_\ast(\bfZ)_{\bQ} \right\},
\]
where $\nu_{\tilde w}$ is as in Section~\ref{sec:newton_and_sigma_classes}. In other words, $\widetilde Z^1(\sigma,\widetilde W)$ is precisely the set of all basic elements in $\widetilde W$ in the sense of Section~\ref{sec:newton_and_sigma_classes}. Define an equivalence relation on $\widetilde Z^1(\sigma, \widetilde W)$ by $x \sim y$ if and only if there exists a $g \in \widetilde W$ such that $y = g^{-1} x \sigma(g)$. Put 
\[
\widetilde H^1(\sigma,\widetilde W) = \widetilde Z^1(\sigma, \widetilde W)/\sim.
\]
Note that we have $H^1(\sigma, \widetilde W)\subseteq \widetilde H^1(\sigma, \widetilde W)$ and $Z^1(\sigma,\widetilde W)\subseteq \widetilde Z^1(\sigma, \widetilde W)$ is precisely the set of those $\tilde w$ with $\nu_{\tilde w} = 0$. Recalling from \eqref{eq:H1_and_basic_and_fundamental_group} that $H^1(\sigma,\bfG) \subseteq B(\bfG)_{\bas}$, we have the following lemma.

\begin{lm}
There is a natural surjection $\pi \colon \widetilde H^1(\sigma, \widetilde W) \rar B(\bfG)_{\bas}$ extending the surjection $\pi_0 \colon H^1(\sigma, \widetilde W) \rar H^1(\sigma,\bfG)$. 
\end{lm}
\begin{proof}
Suppose $\tilde w_1 \sim \tilde w_2 \in \widetilde Z^1(\sigma,\widetilde W)$, \textit{i.e.}, $\tilde w_2 = g^{-1}\tilde w_1 \sigma(g)$. Then obviously $\bar\kappa_{\bfG}(\tilde w_1) = \bar\kappa_{\bfG}(\tilde w_2) \in \pi_1(\bfG)_{\langle \sigma \rangle}$, and an easy computation shows that $\nu_{\tilde w_1} = \nu_{\tilde w_2}$. Thus some (\textit{i.e.}, any) lifts to $\bfG(\breve k)$ of $\tilde w_1$, $\tilde w_2$ are $\sigma$-conjugate. This gives a map $\widetilde H^1(\sigma,\widetilde W) \rar B(\bfG)$ whose image is contained in $B(\bfG)_{\bas}$ as each $\tilde w \in \widetilde Z^1(\sigma,\widetilde W)$ is basic. The surjectivity is clear.
\end{proof}

The projection $\widetilde W \tar W$ induces a map $q \colon \widetilde H^1(\sigma, \widetilde W) \rar H^1(\sigma, W)$. Altogether, we have the diagram
\begin{equation}\label{eq:diagram_pi_q_extended}
H^1(\sigma,W) \stackrel{q}{\longleftarrow} \widetilde H^1(\sigma,\widetilde W) \stackrel{\pi}{\longrightarrow} B(\bfG)_{\bas}
\end{equation}
extending \eqref{eq:stabe_rational_classes}. DeBacker and Reeder's parametrization from Section~\ref{sec:rational_classes_pure_forms} admits the following completely analogous generalization to all extended pure inner forms.

\begin{prop}\label{prop:rational_stable_classes_extended_pure_inner_forms}
Given $[w] \in H^1(\sigma, W)$ and $[b] \in B(\bfG)_{\bas}$, we have a natural bijection $\fT(\bfG_b,w)/\Ad \bfG_b(k) \cong \pi^{-1}([b]) \cap q^{-1}([w])$. In particular, $[w]$ is in the image of \eqref{eq:stable_classes} if and only if $\pi^{-1}([b]) \cap q^{-1}([w]) \neq \varnothing$.
\end{prop}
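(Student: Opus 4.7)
\medskip

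\noindent\textbf{Proof plan.} The strategy is to imitate the pure inner form argument recalled in \S\ref{sec:rational_classes_pure_forms} (following \cite[Lemma~6.2]{Reeder_11} and Remark~\ref{rem:shape_of_tori_in_a_stable class}), with $H^1(\sigma,\widetilde W)$ replaced by the enlargement $\widetilde H^1(\sigma,\widetilde W)$ and $H^1(\sigma,\bfG)$ replaced by $B(\bfG)_{\rm bas}$. Fix once and for all a representative $b \in \bfN(\breve k)$ of $[b]$ (possible by \S\ref{sec:newton_and_sigma_classes}). I will construct mutually inverse maps between $\fT(\bfG_b,w)/\Ad\bfG_b(k)$ and $\pi^{-1}([b])\cap q^{-1}([w])$.

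\smallskip

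\noindent\textbf{Forward map.} Given a torus in $\fT(\bfG_b,w)$, write it as ${}^g\bfT$ for some $g \in \bfG(\breve k)$; by Remark~\ref{rem:shape_of_tori_in_a_stable class}, $\dot w := g^{-1}b\sigma(g) \in \bfN(\breve k)$ and its image $w' \in W$ represents $[w]$. Let $\tilde w \in \widetilde W$ denote the image of $\dot w$. Then $\tilde w$ is basic, since any lift to $\bfG(\breve k)$ is $\sigma$-conjugate to $b$ and hence has the same Newton point; thus $\tilde w \in \widetilde Z^1(\sigma,\widetilde W)$. I assign to ${}^g\bfT$ the class $[\tilde w] \in \widetilde H^1(\sigma,\widetilde W)$. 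Clearly $\pi([\tilde w]) = [b]$ and $q([\tilde w]) = [w]$. For well-definedness, replacing $g$ by $hg$ with $h \in \bfG_b(k)$ leaves $\dot w$ unchanged, while replacing $g$ by $g\dot v$ with $\dot v \in \bfN(\breve k)$ lifting $\tilde v \in \widetilde W$ replaces $\tilde w$ by $\tilde v^{-1}\tilde w\sigma(\tilde v)$, giving the equivalence relation defining $\widetilde H^1$.

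\smallskip

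\noindent\textbf{Inverse map.} Given $[\tilde w] \in \pi^{-1}([b]) \cap q^{-1}([w])$, pick any lift $\dot w \in \bfN(\breve k)$. Since $\pi([\tilde w]) = [b]$ in $B(\bfG)_{\rm bas}$, there exists $g \in \bfG(\breve k)$ with $g^{-1}b\sigma(g) = \dot w$. The torus ${}^g\bfT$ is then $k$-rational in $\bfG_b$, and its stable class lies in $\fT(\bfG_b,w)$ by construction. The content is to verify that different choices (of lift $\dot w$, of $g$, of representative of $[\tilde w]$) yield $\bfG_b(k)$-conjugate tori: given two data $(g_i,\dot w_i)$, $i=1,2$, with $\tilde w_1 = \tilde v^{-1}\tilde w_2 \sigma(\tilde v)$ in $\widetilde W$ for some $\tilde v$, I want to exhibit $h \in \bfG_b(k)$ with ${}^h({}^{g_1}\bfT) = {}^{g_2}\bfT$. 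Setting $h = g_2\dot v g_1^{-1}$ for a suitably chosen lift $\dot v \in \bfN(\breve k)$ of $\tilde v$, a direct computation shows
\[
h^{-1}b\sigma(h) = g_1\left(\dot v^{-1}\dot w_2 \sigma(\dot v)\right)\sigma(g_1)^{-1} \dot w_1^{-1}\cdot b,
\]
so $h \in \bfG_b(k)$ precisely when $\dot v^{-1}\dot w_2\sigma(\dot v) = \dot w_1$ in $\bfN(\breve k)$ (not merely in $\widetilde W$).

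\smallskip

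\noindent\textbf{Main obstacle.} The crux is therefore to lift the equivalence relation from $\widetilde W$ to $\bfN(\breve k)$: given $\tilde v \in \widetilde W$ with $\tilde v^{-1}\tilde w_2\sigma(\tilde v) = \tilde w_1$ and arbitrary lifts $\dot w_i$ of $\tilde w_i$, modify $\dot v$ within its fiber $\caT(\caO_{\breve k})$-coset to achieve equality already in $\bfN(\breve k)$. The discrepancy between an arbitrary initial lift and the desired one is an element $t \in \caT(\caO_{\breve k})$, and unwinding the $\bfN$-commutation identities the required correction amounts to solving an equation of the shape $(\sigma_{\tilde w_1}-1)(t') = t$ inside $\caT(\caO_{\breve k})$. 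This is a Lang-type surjectivity statement for the pro-finite torus $\caT(\caO_{\breve k})$ with its twisted Frobenius, which is precisely the same ingredient used in the footnote at the start of \S\ref{sec:rational_classes_pure_forms} to identify $H^1(\sigma,\bfN(\breve k)) \cong H^1(\sigma,\widetilde W)$; adapted to the present twist it gives what is needed. Once this lifting lemma is in hand, mutual inverseness of the two constructions is a direct verification, completing the bijection. The final assertion about the image of \eqref{eq:stable_classes} then follows by unwinding the definition of $\fT(\bfG_b,w)$.
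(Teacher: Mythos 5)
Your proof is correct and takes essentially the same route as the paper's (the twisting computation of Reeder's Lemma~6.2 adapted to $\widetilde H^1(\sigma,\widetilde W)$, with the passage from $\widetilde W$ to $\bfN(\breve k)$ handled by twisted pro-Lang for $\caT(\caO_{\breve k})$ — exactly the step the paper compresses into ``one immediately checks''). The only blemish is a transcription slip in your displayed formula: one computes $h^{-1}b\sigma(h) = g_1\left(\dot v^{-1}\dot w_2\sigma(\dot v)\dot w_1^{-1}\right)g_1^{-1}\,b$, not the expression you wrote, but the criterion you extract from it ($h \in \bfG_b(k)$ iff $\dot v^{-1}\dot w_2\sigma(\dot v) = \dot w_1$ in $\bfN(\breve k)$) is the correct one.
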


\begin{proof}
Any unramified maximal $k$-torus in $\bfG_b$ is of the form ${}^g\bfT$ with $g^{-1}b\sigma(g) \in \bfN(\breve k)$; see Remark~\ref{rem:shape_of_tori_in_a_stable class}. Let $\tilde w_g \in \widetilde W$ be the image of $g^{-1}b\sigma(g)$. Then $\nu_{\tilde w_g} = \nu_b$, and hence $\tilde w_g \in \widetilde Z^1(\sigma, \widetilde W)$ and $\pi([\tilde w_g]) = [b]$. Let ${}^{g_i}\bfT$ ($i=1,2$) be two such tori with corresponding elements $\tilde w_i \in \widetilde Z^1(\sigma, \widetilde W)$. Suppose that the ${}^{g_i}\bfT$ are rationally conjugate in $\bfG_b$, \textit{i.e.}, there is some $h \in \bfG_b(k)$ with ${}^{g_1}\bfT = {}^{hg_2}\bfT$. Then $g_1^{-1}h g_2 \in \bfN(\breve k)$, and letting $\tilde v \in \widetilde W$ be its image, we compute
\begin{equation}\label{eq:rational_conj_aux_eq}
\tilde v \tilde w_2 \sigma(\tilde v)^{-1} = g_1^{-1}h g_2 g_2^{-1} b \sigma(g_2) \sigma(g_2^{-1} h^{-1}g_1) = g_1^{-1} h b \sigma(h^{-1}) \sigma(g_1) = g_1^{-1} b \sigma(g_1) = \tilde w_1;
\end{equation}
\textit{i.e.}, $\tilde w_1$ and $\tilde w_2$ define the same class in $\widetilde H^1(\sigma,\widetilde W)$. Conversely, given $[\tilde w]\in \tilde\pi^{-1}([b])$, we can lift $\tilde w$ to some $\dot w \in \bfN(\breve k)$, which necessarily belongs to $[b]$. If $\dot w = g^{-1}b\sigma(g)$, one immediately checks that $\tilde w \mapsto {}^g\bfT$ induces an inverse to the above map. This shows that rational conjugacy classes of unramified tori in $\bfG_b$ are in bijection with $\pi^{-1}([b])$, establishing the analogue of \cite[Lemma 6.2]{Reeder_11}. The rest of the proof (the analogue of \cite[Lemma 6.4]{Reeder_11}) is done analogously.\qedhere
\end{proof}

\subsection{A description of $\boldsymbol{\pi^{-1}([b]) \cap q^{-1}([w])}$} \label{sec:fibers_LFw_Fw}

Fix $w \in W$ and $[b] \in B(\bfG)_{\bas}$. Put
\[
F_w \eqdef \text{fiber of $\bfN \rar W$ over $w$} \quad \text{and} \quad \overline F_w \eqdef \text{fiber of $\widetilde W \rar W$ over $w$}.
\]
We regard $F_w$ as a $\breve k$-scheme and $\overline F_w$ as a discrete $\obF$-scheme (or, sometimes, as a discrete set). Then $F_w$ is a (trivial) $\bfT_{\breve k}$-torsor and $\overline F_w$ is a (trivial) $X_\ast(\bfT)$-torsor. The action of $\bfT_{\breve k}$ on $F_w$ is given by $t,\tilde w \mapsto t\tilde w$, and the action of $X_\ast(\bfT)$ on $\overline F_w$ is induced by this. Recall the maps $\kappa_w \colon L\bfT \rar X_\ast(\bfT)_{\langle \sigma_w\rangle}$ and $\bar\kappa_w \colon X_\ast(\bfT) \rar X_\ast(\bfT)_{\langle \sigma_w\rangle}$ from Section~\ref{sec:sheaf_Kottwitz_map}.

\begin{lm}\label{lm:pass_to_ext_affine_Weylgroup}
The natural surjection $F_w(\breve k) \tar \overline F_w$ induces an isomorphism $LF_w\,/ \ker\kappa_w \cong \overline F_w \,/ \ker\overline\kappa_w $. This discrete set is a $($trivial\,$)$ $X_\ast(\bfT)_{\langle \sigma_w \rangle}$-torsor.
\end{lm}

\begin{proof} As $LF_w$ (resp.\ $\overline F_w$) is a trivial $L\bfT$-torsor (resp.\ $X_\ast(\bfT)$-torsor), everything follows from $L\bfT/\ker\kappa_w = X_\ast(\bfT)\,/\ker\bar\kappa_w = X_\ast(\bfT)_{\langle \sigma_w \rangle}$. \qedhere
\end{proof}

Observe that for $\tilde w \in \overline F_w$, the values of $\nu_{\tilde w}$ and $\bar\kappa_\bfG(\tilde w)$ remain constant if $\tilde w$ is multiplied by an element in $\ker\bar\kappa_w$. Indeed, for $\bar\kappa_w$ this is clear, and for $\nu$ this follows from the description in Section~\ref{sec:newton_and_sigma_classes}. In particular, $\bar\kappa_{\bfG}|_{\overline F_w}$ induces a map
\begin{equation}\label{eq:barbar_kappa_w}
\bar\kappa^w \colon \overline F_w\,/\ker\bar\kappa_w \longrar \pi_1(\bfG)_{\langle \sigma \rangle}. 
\end{equation}
Moreover, from Section~\ref{sec:newton_and_sigma_classes} we immediately deduce the following lemma.

\begin{lm}\label{lm:Newton_and_kottwitz_map_on_Fwmodker}
The map \eqref{eq:newton_and_kottwitz} factors through an injection $(\nu,\bar\kappa^w) \colon \overline F_w\,/\ker\bar\kappa_w \rar (W\backslash X_\ast(\bfT)_{\bQ})^{\langle \sigma \rangle} \times \pi_1(\bfG)_{\langle \sigma \rangle}$. 
\end{lm}

Fix $w \in W$ and $[b] \in B(\bfG)_{\bas}$. Put 
\[
[b] \cap \overline F_w \eqdef {\im}\left([b] \cap F_w(\breve k) \har F_w(\breve k)\tar \overline F_w\right).
\]
Then $[b] \cap \overline F_w \subseteq [b] \cap \widetilde Z^1(\sigma, \widetilde W)$, inducing a map $[b] \cap \overline F_w \rar \pi^{-1}([b]) \subseteq \widetilde H^1(\sigma, \widetilde W)$. This obviously factors through a map $[b] \cap \overline F_w \rar \pi^{-1}([b]) \cap q^{-1}([w])$. By Lemma~\ref{lm:Newton_and_kottwitz_map_on_Fwmodker}, $[b] \cap \overline F_w$ is stable under the action of $\ker\bar\kappa_w$, and we have the (finite) quotient set $[b] \cap \overline F_w\, / \ker\bar\kappa_w$, consisting of all $\bar w \in \overline F_w\,/\ker\bar\kappa_w$ satisfying $\nu_{\bar w} = \nu_b$ and $\bar\kappa^w(\bar w) = \kappa_\bfG(b)$.

{\samepage 
\begin{prop}\label{prop:conj_classes_of_tori_basic_case} \leavevmode
\begin{enumerate}
\item\label{cctbc-1} The quotient $[b] \cap \overline F_w\,/ \ker\bar\kappa_w$ is either empty or a $\ker(H^1(\sigma_w,\bfT) \rar H^1(\sigma, \bfG)) = \im(\beta_w)_{\tors}$-torsor $($\textit{cf.}~\eqref{eq:ker_of_coho}$)$. It is empty if and only if\, $\fT(\bfG_b,w) = \varnothing$. 
\item\label{cctbc-2} The centralizer $C_W(w\sigma) \subseteq W$ of\, $w\sigma \in W \rtimes \langle \sigma \rangle$ acts on $[b] \cap \overline F_w\,/\ker\bar\kappa_w$ in a natural way, and there are canonical isomorphisms 
\[
\left([b] \cap \overline F_w\,/\ker\bar\kappa_w \right) \slash C_W(w\sigma) \isorlong  \pi^{-1}([b]) \cap q^{-1}([w]) \cong \fT(\bfG_b,w)\,/\Ad \bfG_b(k).
\]
\end{enumerate}
\end{prop}
}

\begin{proof}
We have $\fT(\bfG_b,w)\neq \varnothing \,\LRar\, \exists g \in \bfG(\breve k) \text{ such that } g^{-1}b\sigma(g) \in F_w(\breve k) \,\LRar\, [b] \cap \overline F_w \neq \varnothing$,
where the first equivalence follows from Remark~\ref{rem:shape_of_tori_in_a_stable class}. To show the first claim of~\eqref{cctbc-1}, we may assume that there is an element $\tilde w_1 \in [b] \cap \overline F_w \neq \varnothing$. Recall that $\overline F_w$ is a $X_\ast(\bfT)$-torsor. 

\begin{lm}\label{lm:torsor_in_part_i_of_rat_conj_classes_prop} For $\tau \in X_\ast(\bfT)$, we have $\tau\tilde w_1 \in [b] \cap \overline F_w \LRar \bar\kappa_w(\tau) \in \ker(H^1(\sigma_w, \bfT) \rar H^1(\sigma, \bfG))$. 
\end{lm}

\begin{proof}
We have $\tau\tilde w_1 \in [b] \cap \overline F_w$ if and only if $\bar\kappa_\bfG(\tau) = 0$ (as $\kappa_\bfG$ is a group homomorphism) and $\nu_{\tau \tilde w_1} = \nu_{\tilde w_1}$. Now, $\bar\kappa_\bfG(\tau) = 0$ means that the image $\bar\kappa_w(\tau) \in X_\ast(\bfT)_{\langle \sigma_w \rangle}$ maps to $0 \in \pi_1(\bfG)_{\langle \sigma \rangle}$, which is equivalent to $\bar\kappa_w(\tau) \in \im(\beta_w)$, with $\beta_w$ as in \eqref{eq:def_of_beta_w}. Now we use the algorithm to compute $\nu$ from Section~\ref{sec:newton_and_sigma_classes}. Note that the integer $d$ can be chosen the same for $\tilde w_1$ and $\tau\tilde w_1$. Then let $\mu_{\tilde w_1}$ and $\mu_{\tau \tilde w_1}$ be as described in Section~\ref{sec:newton_and_sigma_classes}. Then the equality $\nu_{\tau \tilde w_1} = \nu_{\tilde w_1}$ is equivalent to the existence of some $v \in W$ such that the equality
\[
\sum_{i=0}^{d-1} (w\sigma)^i(\tau) = v(\mu_{\tilde w_1}) - \mu_{\tilde w_1}
\]
holds in $X_\ast(\bfT)$. But $\tilde w_1$ is basic, so $\mu_{\tilde w_1}$ is central; \textit{i.e.}, the right-hand side is zero. Thus $\tau\tilde{w}_1 \in [b] \cap \overline F_w$ if and only if $\bar\kappa_w(\tau) \in \im(\beta_w)$ and $\sum_{i=0}^{d-1} (w\sigma)^i(\tau) = 0$. By \cite[Lemma 2.4.1]{deBackerR_09}, the latter condition is equivalent to $\tau \in \bar\kappa_w^{-1}(X_\ast(\bfT)_{\langle \sigma_w \rangle, \tors}) = \bar\kappa_w^{-1}(H^1(\sigma_w,\bfT))$. 
\end{proof}

By Lemma~\ref{lm:torsor_in_part_i_of_rat_conj_classes_prop}, $[b] \cap \overline F_w$ (being non-empty) is a torsor under the preimage in $X_\ast(\bfT)$ of $\ker(H^1(\sigma, \bfT_w) \rar H^1(\sigma, \bfG)) \subseteq X_\ast(\bfT)_{\langle\sigma_w\rangle}$. This preimage contains $\ker\bar\kappa_w$, and factoring it out, we arrive at the claim. This finishes the proof of~\eqref{cctbc-1}.

In~\eqref{cctbc-2}, it is clear that we have a map $[b] \cap \overline F_w \rar \pi^{-1}([b]) \cap q^{-1}([w])$, and one checks that it factors through $[b] \cap \overline F_w\,/\ker\bar\kappa_w$. Moreover, surjectivity of this map amounts to the tautological fact that any element in $[w]\subseteq W$ is $\sigma$-conjugate to $w$. Next, let us describe the natural action of $C_W(w\sigma)$ on $[b] \cap \overline F_w\,/\ker\bar\kappa_w$. Let $\bar w\in [b] \cap \overline F_w\,/\ker\bar\kappa_w$ with preimage $\tilde w \in [b]\cap \overline F_w$. Let $v \in C_W(w\sigma)$, and let $\tilde v \in \widetilde W$ be any preimage. Then the action of $v$ sends $\bar w$ to the image of $\tilde v^{-1} \tilde w \sigma(\tilde v)$ in $[b]\cap \overline F_w\,/\ker\bar\kappa_w$. This is independent of the choice of~$\tilde v$, as if $\tilde v'$ is another choice, putting $\tau = \tilde v^{-1}\tilde v' \in X_\ast(\bfT)$ gives the element $\tilde v^{\prime -1} \tilde w \sigma(\tilde v') = \tau^{-1}\tilde v^{-1}\tilde w \sigma(\tilde v)\sigma(\tau)$, which has the same image in $[b]\cap \overline F_w\,/\ker\bar\kappa_w$. Similarly, one shows that it is independent of the choice of~$\tilde w$. Now, $\bar w' \in [b] \cap \overline F_w\,/\ker\bar\kappa_w$ has the same image as $\bar w$ in $\pi^{-1}([b]) \cap q^{-1}([w])$ if and only if there is some $\tilde v \in \widetilde W$ with $\tilde v^{-1}\tilde w \sigma(\tilde v) = \tilde w'$ for some lift $\tilde w' \in \overline F_w$ of $\bar w'$. By the definition of the $C_W(w\sigma)$-action, this is equivalent to the fact that $\bar w$ and $\bar w'$ lie in the same $C_W(w\sigma)$-orbit. Thus $C_W(w\sigma)$-orbits in $[b] \cap \overline F_w\,/\ker\bar\kappa_w$ coincide with fibers of the surjection onto $\pi^{-1}([b]) \cap q^{-1}([w])$, and we are done.\qedhere
\end{proof}

\begin{rem} Let us explicate the correspondence in Proposition~\ref{prop:conj_classes_of_tori_basic_case}\eqref{cctbc-2}. If $\bar w \in [b] \cap \overline F_w\,/\ker\bar\kappa_w$ represents a class on the left side, $\dot w \in [b] \cap F_w(\breve k)$ is a (\textit{i.e.}, any) lift of $\bar w$, and $g \in \bfG(\breve k)$ satisfies $g^{-1}b \sigma(g) = \dot w$, then the map ${\Ad(g)} \circ \iota \colon \bfT_w \rar \bfG_b$ is $k$-rational, its image ${}^g \bfT \subseteq \bfG_b$ lies in $\fT(\bfG_b,w)$ and the $\bfG_b(k)$-conjugacy class of ${}^g\bfT$ is the image of $\bar{w}$ under the above map.
\end{rem}

\subsection{The case of Coxeter tori}\label{sec:Coxeter_tori_case}

We keep the setup of Sections~\ref{sec:rationa_conj_classes_ext_pure_forms} and~\ref{sec:fibers_LFw_Fw}. Furthermore, throughout Section~\ref{sec:Coxeter_tori_case} we assume that $w=c \in W$ \emph{is a Coxeter element}. Recall $\beta_c \colon X_\ast(\bfT^{\sssc})_{\langle\sigma_c\rangle} \rar X_\ast(\bfT)_{\langle\sigma_c\rangle}$ from \eqref{eq:def_of_beta_w}.

{\samepage 
\begin{prop}\label{cor:rational_classes_Coxeter_case} Let $[b] \in B(\bfG)_{\bas}$ and $c \in W$ be Coxeter. Then the following hold: 
\begin{enumerate}
\item\label{rcCc-1} The image $\im(\beta_c)$ is finite.
\item\label{rcCc-2} We have 
\[ [b] \cap \overline F_c\,/\ker\bar\kappa_c = (\bar\kappa^c)^{-1}(\kappa_\bfG(b)).
\]
This is an $\im(\beta_c)$-torsor. In particular, $[b]\cap \overline F_c \neq \varnothing$.
\end{enumerate}
\end{prop}
}

\begin{proof}
\eqref{rcCc-1} As $c$ Coxeter, $X_\ast(\bfT^{\sssc})^{\langle \sigma_c \rangle} = 0$; hence $X_\ast(\bfT^{\sssc})_{\langle \sigma_c \rangle}$ is finite. Hence $\im(\beta_c)$ is finite. For \eqref{rcCc-2}  we first record the following easy and well-known lemma. 

\begin{lm}\label{lm:Cox_gives_basics}
If $c$ is Coxeter, all elements in $\overline F_c$ are basic.
\end{lm}

\begin{proof}
Let $\tilde c \in \overline F_c$. If $\mu$ is as in the algorithm to compute $\nu_b$ from Section~\ref{sec:newton_and_sigma_classes}, we see that $\mu \in X_\ast(\bfT)^{\langle\sigma_c\rangle}_{\bQ} \subseteq X_\ast(\bfZ)_{\bQ}$,  the last inclusion being true as $c$ is Coxeter.
\end{proof}

Now note that $\bar \kappa_\bfG|_{\overline F_c} \colon \overline F_c \rar \pi_1(\bfG)_{\langle \sigma \rangle}$ is surjective. This holds as $\overline F_c$ is a $X_\ast(\bfT)$-torsor, $\kappa_{\bfG}$ is a group homomorphism and $X_\ast(\bfT) \rar \pi_1(\bfG)_{\langle \sigma \rangle}$ is surjective.  Taking any preimage $b'$ of $\kappa_\bfG(b)$ in $\overline F_c$, Lemma~\ref{lm:Cox_gives_basics} shows that $b'$ is basic. Moreover, we know that $\kappa_\bfG$ induces a bijection $B(\bfG)_{\bas} \rar \pi_1(\bfG)_{\langle \sigma \rangle}$. It follows that $b' \in [b]$; \textit{i.e.}, $[b] \cap \overline F_c \neq \varnothing$. Moreover, as this holds for any $b'$ in the preimage, we even get the equality claimed in~\eqref{rcCc-2}. Now, $(\bar\kappa^c)^{-1}(\bar\kappa_\bfG(b))$ is a $\ker(X_\ast(\bfT_{\langle \sigma_c \rangle} \rar \pi_1(\bfG)_{\langle \sigma \rangle}) = \im(\beta_c)$-torsor. 
\end{proof}

\begin{rem}\label{rem:basic_represented_over_Coxeter}
It follows from Proposition~\ref{cor:rational_classes_Coxeter_case}\eqref{rcCc-2} that for any $[b] \in B(\bfG)_{\bas}$ and any Coxeter element $c$, there exists a representative of $[b]$ in $F_c(\breve k)$. This is a converse to Lemma~\ref{lm:Cox_gives_basics}.
\end{rem}

\begin{prop}\label{prop:Cox_tori_parametrization}
Let $c \in W$ be Coxeter. Suppose one of the following holds:
\begin{enumerate}
\item\label{Ctp-1} We have $[b]=1$.
\item\label{Ctp-2} We have $\bfG = \prod_{i=1}^s \bfG_i$, where for each $i$, $\bfG_i = {\Res}_{k_i/k}\bfG_i'$ for a finite unramified extension $k_i/k$ and an absolutely almost simple $k_i$-group $\bfG'_i$. 
\end{enumerate}
Then the $C_W(c\sigma)$-action on $[b] \cap \overline F_c\,/\ker\bar\kappa_c$ is trivial, and there is a natural bijection 
\[ [b] \cap \overline F_c\,/\ker\bar\kappa_c \isorlong \fT(\bfG_b,c)\,/\Ad\bfG_b(k).\] 
\end{prop}

\begin{proof}
Choose some $\sigma$-equivariant splitting $\iota \colon W \rar \widetilde W$ (which exists, as $\bfG$ is unramified), and denote its image by $\dot W$. Let $\dot c \in \overline F_c \cap \dot W$. Let $\tau \in X_\ast(\bfT)$ be such that $\tau \dot c \in [b] \cap \overline F_c$. Let $v \in C_W(c\sigma)$, and let $\dot v = \iota(v)$. The action of $v$ sends the image of $\tau\dot c$ in $[b]\cap \overline F_c\,/\ker\bar\kappa_c$ to the image in $[b]\cap \overline F_c\,/\ker\bar\kappa_c$ of the element $\dot v^{-1}\tau \dot c \sigma(\dot v)$.  As $\iota$ is $\sigma$-equivariant, $\dot v \in C_{\dot W}(\dot c\sigma)$, so $\dot v^{-1}\tau \dot c \sigma(\dot v) = \dot v^{-1}\tau \dot v\dot c = \Ad(v)^{-1}(\tau)\dot c$; \textit{i.e.}, we must show (replacing $v$ with $v^{-1}$) that $(\Ad\,v - \id)(\tau) = \Ad\,v(\tau) - \tau \in \ker\bar\kappa_c$. 

In case~\eqref{Ctp-1}, we have $[1] \cap \overline F_c = X_\ast(\bfT^{\sssc})\cdot \dot c$. Thus, as $\ker\bar\kappa_c^{\sssc} \subseteq \ker\bar\kappa_c$, it suffices to show that we have $(\Ad\,v - \id)(X_\ast(\bfT^{\sssc})) \subseteq \ker\bar\kappa_c^{\sssc}$. In other words, we are reduced to the case that $\bfG$ is simply connected, which is covered by~\eqref{Ctp-2}. 

In case~\eqref{Ctp-2}, one is immediately reduced to the case that $\bfG = \Res_{k'/k}\bfG'$ with $k'/k$ a finite subextension of $\breve k/k$ and $\bfG'$ an absolutely almost simple $k'$-group. Let $d = [k'\colon k]$. Write $X_\ast(\bfT) = \bigoplus_{i=1}^d X_\ast(\bfT')$ and $W = \prod_{i=1}^d W'$, with $\sigma$-action given by $(v_1,\dots,v_d) \mapsto (\sigma'(v_d),v_1,\dots, v_{d-1})$ on both objects, where $\sigma'$ is the $\breve k/k'$-Frobenius for $\bfG'$. If $c,c_1 \in W$ are $\sigma$-conjugate, the statement claimed in the proposition holds for $c$ if and only if it holds for $c_1$. Thus we may assume that $c = (c',1,\dots, 1)$, where $c' \in W'$ is a Coxeter element in~$W'$. Suppose we know the claim for $\bfG',c'$; \textit{i.e.}, for any $v' \in C_{W'}(c'\sigma')$, $(\Ad v' - \id)(X_\ast(\bfT')) \subseteq (\sigma'_{c'} - \id)(X_\ast(\bfT'))$. We have 
\begin{align*} \ker\bar\kappa_c &= (\sigma_c - \id)(X_\ast(\bfT)) = \left\{ \sigma'_{c'}(\alpha_d) - \alpha_1, \alpha_1 - \alpha_2,\dots,\alpha_{r-1}-\alpha_d  \colon \alpha_i \in X_\ast(\bfT') \text{ for } 1\leq i \leq d\right\} \\ 
&=\left\{\left(\sigma'_{c'}(\alpha_r) - \alpha_d - \sum_{i=1}^{d-1}\beta_i,\beta_1, \dots, \beta_{d-1}\right) \colon \beta_i,\alpha_d \in X_\ast(\bfT') \text{ for } 1\leq i \leq d-1 \right\}
\end{align*}
The inclusion $W' \rar W$ into the first factor induces an isomorphism $C_{W'}(c'\sigma') \isor C_W(c\sigma)$, $v' \mapsto (v',\dots,v') = v$. For $(e_i)_{i=1}^d \in X_\ast(\bfT)$, we have $(\Ad v - \id)((e_i)_i) = ((\Ad v' - \id)(e_i))_{i=1}^d$. This lies in $\ker\bar\kappa_c$ as, setting $\beta_i := (\Ad v' - \id)(e_{i+1})$ for $1\leq i \leq d-1$, it suffices to find an $\alpha_d \in X_\ast(\bfT')$ such that $\sigma'_{c'}(\alpha_d) - \alpha_d = \Ad v'(\sum_{i=1}^d e_i) - \sum_{i=1}^d e_i$, which works by assumption for $\bfG'$.

It remains to check the claim for an absolutely almost simple group $\bfG$ over $k$. Let $d > 0$ be the smallest positive integer such that $\sigma^d$ acts trivially on the Dynkin diagram of $\bfG$. By \cite[Theorem~7.6(v)]{Springer_74}, $C_W(c\sigma)$ is the cyclic subgroup of $W$ generated by $c^{\spl} = c\sigma(c)\dots\sigma^{d-1}(c)$. Thus, it suffices to check that $(\Ad c^{\spl} - \id)(X_\ast(\bfT)) \subseteq \ker\bar\kappa_c = \im(\sigma_c - \id \colon X_\ast(\bfT) \rar X_\ast(\bfT))$. Observe that $\sigma_c(\ker\bar\kappa_c) = \ker\bar\kappa_c$ (as $\sigma_c$ is an automorphism and for any $v \in X_\ast(\bfT)$, $\sigma_c(\sigma_c(v) - v) = \sigma_c(\sigma_c(v)) - \sigma_c(v) \in \ker\bar\kappa_c$). 
Now $\sigma^d$ acts trivially on $X_\ast(\bfT)$, so it suffices to check that for all $\tau \in X_\ast(\bfT)$, $c^{\spl}\sigma^d(\tau) - \tau \in \ker\bar\kappa_c$, or equivalently, $\sigma_c^d(v) - v \in \ker\bar\kappa_c$. We now prove that $\sigma_c^i(v) - v \in \ker\bar\kappa_c$ for all $i>0$. For $i=1$, this holds as $\ker\bar\kappa_c = \im(\sigma_c - 1)$. For $i>1$, we have $\sigma_c^i(v) - v = \sigma_c^i(v) - \sigma_c^{i-1}(v) + \sigma_c^{i-1}(v) - v = \sigma_c^{i-1}(\sigma_c(v) - v) + \sigma_c^{i-1}(v) - v$. Now we have $\sigma_c^{i-1}(v) - v\in \ker\bar\kappa_c$ by induction on $i$, and $\sigma_c^{i-1}(\sigma_c(v) - v) \in \sigma_c^{i-1}(\ker\bar\kappa_c) = \ker\bar\kappa_c$. \qedhere
\end{proof}

Proposition~\ref{prop:Cox_tori_parametrization} becomes false without any assumptions on $\bfG$, $b$, as the next example shows. 

\begin{ex} Suppose $\charac k \neq 2$. Let $\bfG' = {\bf SL}_2 \times {\bf SL}_2$ and $\bfG = \bfG'/\mu_2$, embedded diagonally. Let $\bfT' \subseteq \bfG'$ be the product of diagonal tori in ${\bf SL}_2$ and $\bfT$ its image in $\bfG$. Let $\varepsilon_1 = ((1,-1), (0,0))$, $\varepsilon_2 = ((0,0),(1,-1))$ be a $\bZ$-basis of $X_\ast(\bfT')$. We may identify $X_\ast(\bfT) = \{\frac{a_1 \varepsilon_1 + a_2\varepsilon_2}{2} \in X_\ast(\bfT')_\bQ \colon a_1,a_2 \in \bZ, a_1 \equiv a_2 \mod 2\}$. Let $s$ be the image of $\left(\begin{smallmatrix}0&1\\-1&0\end{smallmatrix}\right)$ in the Weyl group of the diagonal torus in ${\bf SL}_2$, so that $c= (s,s)$ represents a Coxeter element in $W$. Let $\dot c_0 = \left(\left(\begin{smallmatrix}0&1\\-1&0\end{smallmatrix}\right),\left(\begin{smallmatrix}0&1\\-1&0\end{smallmatrix}\right)\right) \in F_c(\breve k)$ be a hyperspecial lift, and let $\bar c_0$ be its image in $\overline F_c$. We have $C_W(c) = W \cong \bZ/2\bZ \times \bZ/2\bZ$. Note that $\tau_1 = (\varpi^{\frac{1}{2}},-\varpi^{-\frac{1}{2}}),(\varpi^{\frac{1}{2}},-\varpi^{-\frac{1}{2}}) \in \bfG(k)$ by Galois descent. The action of the Frobenius $\sigma$ on all objects below is trivial, so we ignore it. We have $\pi_1(\bfG) = X_\ast(\bfT)\,/X_\ast(\bfT') \cong \bZ/2\bZ$, the non-trivial element being the class of $\frac{\varepsilon_1+\varepsilon_2}{2}$. It is represented by the basic element $b_1 = \dot c_0\tau_1$. We have $\ker\bar\kappa_c = \bZ \cdot 2\varepsilon_1 + \bZ \cdot (\varepsilon_2 - \varepsilon_1) \subseteq X_\ast(\bfT)$.

\subsubsection*{\it Case~$b = 1$}
We have $[1] \cap \overline F_c = X_\ast(\bfT') \cdot \bar c_0$. As $X_\ast(\bfT')\,/\ker\bar\kappa_c = \{\bar 0, \bar \varepsilon_1\} \cong \bZ/2\bZ$, we have $[1]\cap \overline F_c\,/\ker\bar\kappa_c = (X_\ast(\bfT')\,/\ker\bar\kappa_c)\cdot \bar c_0 = \{\bar0, \bar\varepsilon_1 \} \cdot \bar c_0$. Now it is immediate to check that the action of $C_W(c)$ from Proposition~\ref{prop:conj_classes_of_tori_basic_case} on this set is trivial. For example, $\Ad(s,1)$ send $\varepsilon_i \in X_\ast(\bfT')$ to $(-1)^i\varepsilon_i$ ($i \in 1,2$), so $\Ad(s,1)(\varepsilon_i) - \varepsilon_i \in \ker\bar\kappa_c$. This is in agreement with Proposition~\ref{prop:Cox_tori_parametrization} and also with the Bruhat--Tits picture: $\bfG^{\ad} \cong {\bf PGL}_2 \times {\bf PGL}_2$, so an alcove of $\caB_{\bfG,k}$ is a square whose vertices are hyperspecial of type $(\alpha,\beta)$ with $\alpha,\beta \in \bZ/2\bZ$ the parity of the valuation of the determinant of the lattice representing a vertex of $\caB_{{\bf PGL}_2}$. The action of $\tau_1 \in \bfG(k)$ identifies the vertices $(0,0) \leftrightarrow (1,1)$ and $(0,1) \leftrightarrow (1,0)$, so that there are two $\bfG(k)$-orbits in hyperspecial vertices of a base alcove and hence (by \cite{deBacker_06}) two rational conjugacy classes of Coxeter tori. 

\subsubsection*{\it Case~$b = b_1$}
We have $[b_1] \cap \overline F_c = (X_\ast(\bfT) \sm X_\ast(\bfT')) \cdot \bar c_0$. As $(X_\ast(\bfT) \sm X_\ast(\bfT'))\,/\ker\bar\kappa_c = \{ \frac{\bar\varepsilon_1 + \bar\varepsilon_2}{2}, \frac{\bar\varepsilon_1 + 3\bar\varepsilon_2}{2}\}$, we have $[b_1] \cap \overline F_c\,/\ker\bar\kappa_c = \{ \frac{\bar\varepsilon_1 + \bar\varepsilon_2}{2}, \frac{\bar\varepsilon_1 + 3\bar\varepsilon_2}{2}\}\cdot \dot c_0$. The action of $(s,1) \in C_W(c)$ identifies the two elements in $[b_1] \cap \overline F_c\,/\ker\bar\kappa_c$ as $\frac{\varepsilon_1 + 3\varepsilon_2}{2} - \Ad(s,1)(\frac{\varepsilon_1 + \varepsilon_2}{2}) = \frac{\varepsilon_1 + 3\varepsilon_2}{2} - \frac{-\varepsilon_1 + \varepsilon_2}{2} = \varepsilon_1 + \varepsilon_2 \in \ker\bar\kappa_c$; \textit{i.e.}, the $C_W(c)$-action is non-trivial. This agrees with Bruhat--Tits theory as the adjoint building of $\bfG_{b_1}$ over $k$ is a point (it is the barycenter of an alcove of $\caB_{\bfG,k}$), so that there is just one $\bfG_{b_1}(k)$-conjugacy class of Coxeter tori.
\end{ex}

\section{The maps \texorpdfstring{$\boldsymbol{\dot X_{\bar w}(b) \rar X_w(b)}$}{from Xdot\textunderscore{(bar w)}(b) to X\textunderscore{w}(b)}}\label{sec:stuff_on_torsors}

Here we review and extend some results from \cite[Section~11]{Ivanov_DL_indrep}.
By \cite[Lemma 11.5]{Ivanov_DL_indrep} the space $\dot X_{\dot w}(b)$ depends---up to a $\bfG_b(k) \times \bfT_w(k)$-equivariant isomorphism---only on the image of $\dot w$ in the discrete set $LF_w\,/\ker\kappa_w$ and not on $\dot w$. As the concrete choice of $\dot w$ never plays an important role, we make the following simplifying definition. Recall from Lemma~\ref{lm:pass_to_ext_affine_Weylgroup} that $LF_w\,/\ker\kappa_w = \overline F_w\,/\ker\bar\kappa_w$.

\begin{Def}\label{conv:dotXwb_indep}
For $b \in \bfG(\breve k)$ and $\bar w \in \overline F_w\,/\ker\bar\kappa_w$, we denote by $\dot X_{\bar w}(b)$ the space $\dot X_{\dot w}(b)$ for an arbitrary lift $\dot w \in F_w(\breve k)$ of $\bar w$.
\end{Def}

\begin{prop}[\protect{\textit{cf.} \cite[Propositions~11.4 and 11.9]{Ivanov_DL_indrep}}] \label{prop:from_DL_unram_class_torsors} 
Let $w \in W$ and $b \in \bfG(\breve k)$. There is a canonical map of arc-sheaves
\[
\alpha_{w,b} \colon X_w(b) \longrar \overline F_w\,/\ker\overline \kappa_w
\]
defining a clopen decomposition $X_w(b) = \coprod_{\bar w \in \overline F_w \,/ \ker\overline\kappa_w} X_w(b)_{\bar w }$, where $X_w(b)_{\bar w} := \alpha_{w,b}^{-1}(\{\bar w\})$. For $\bar w \in \overline F_w\,/\ker\bar\kappa_w$, $\dot X_{\bar w}(b) \rar X_w(b)_{\bar w}$ is a pro-\'etale $\bfT_w(k)$-torsor. In particular, $\dot X_{\bar w}(b) \neq \varnothing$ if and only if\, $\bar w \in \im (\alpha_{w,b})$.
\end{prop}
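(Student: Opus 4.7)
My plan is to define $\alpha_{w,b}$ from the refined Bruhat datum carried by a point of $X_w(b)$, and then to identify its fibers with the bases of the torsors $\dot X_{\bar w}(b) \rar X_w(b)_{\bar w}$.

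I would start by fixing a lift $\dot w_0 \in \bfN(\breve k)$ of $w$, so that the refined Bruhat decomposition gives a $\breve k$-scheme isomorphism
\[
L\bfU \times LF_w \times L(\bfU \cap {}^{\dot w_0^{-1}}\bfU^-) \cong L(\bfB\dot w_0\bfB), \qquad (u_1,\dot w,u_2) \mapsto u_1\dot w u_2,
\]
with $LF_w = L\bfT \cdot \dot w_0$. Representing an $R$-point of $X_w(b)$ arc-locally by some $g \in L\bfG(R')$ with $g^{-1}b\sigma(g) \in L(\bfB\dot w_0\bfB)(R')$, I would extract the middle factor $\dot w \in LF_w(R')$ and put $\alpha_{w,b}(g) := [\dot w] \in LF_w/\ker\kappa_w = \overline F_w/\ker\bar\kappa_w$ (using Lemma~\ref{lm:pass_to_ext_affine_Weylgroup}). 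To see that this descends to $X_w(b)$, replace $g$ by $gtu$ with $t \in L\bfT$, $u \in L\bfU$: using the identity $t^{-1}\dot w \sigma(t) = (t^{-1}\sigma_w(t))\dot w$ (a consequence of $\dot w \sigma(t)\dot w^{-1} = \sigma_w(t)$) and absorbing the $\sigma(u)$-contribution via the unique factorization $L\bfU = L(\bfU \cap {}^{\dot w_0^{-1}}\bfU) \cdot L(\bfU \cap {}^{\dot w_0^{-1}}\bfU^-)$ together with $\dot w L(\bfU \cap {}^{\dot w_0^{-1}}\bfU)\dot w^{-1} \subseteq L\bfU$, one computes that the new $\dot w$ differs from the old by left multiplication by $t^{-1}\sigma_w(t) \in \im(\sigma_w - 1) = \ker\kappa_w$ (by \eqref{eq:torus_sequence_kottwitz_map}). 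Since the target is a discrete arc-sheaf, $\alpha_{w,b}$ is a morphism of arc-sheaves, and the fibers $X_w(b)_{\bar w}$ form a clopen decomposition.

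For the torsor statement, I would exploit that $L(\bfG/\bfU) \rar L(\bfG/\bfB)$ is an $L\bfT$-torsor, so that choosing a lift $gU$ of $gB$ amounts to pinning down the refined Bruhat datum $\dot w \in LF_w$ on the nose rather than modulo $\ker\kappa_w$. For a fixed lift $\dot w \in F_w(\breve k)$ of $\bar w$, the condition $g^{-1}b\sigma(g) \in L(\bfU\dot w\bfU)$ cuts out precisely those lifts lying above $X_w(b)_{\bar w}$, and the fiber of $\dot X_{\bar w}(b) \rar X_w(b)_{\bar w}$ over a given point is a $\bfT_w(k)$-torsor because, by \eqref{eq:torus_sequence_kottwitz_map}, the map $\sigma_w - 1 \colon L\bfT \rar L\bfT$ is a pro-\'etale surjection with kernel $\bfT_w(k)$ (applying Lang level-by-level to the truncations of $L\bfT$). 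The final ``in particular'' follows, since a pro-\'etale $\bfT_w(k)$-torsor is nonempty iff its base is.

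The main obstacle is the refined Bruhat bookkeeping in the descent of $\alpha_{w,b}$ from arc-local representatives $g \in L\bfG$ to $X_w(b)$; once this is in place, the clopen decomposition and the torsor statement are structural consequences of the exactness of \eqref{eq:torus_sequence_kottwitz_map}.
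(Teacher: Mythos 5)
Your argument is correct and essentially reconstructs the proof that the paper outsources: the paper's own proof of Proposition \ref{prop:from_DL_unram_class_torsors} is a one-line citation of \cite[Prop.~11.4 and 11.9]{Ivanov_DL_indrep} combined with Lemma \ref{lm:pass_to_ext_affine_Weylgroup}, and the construction there is the same one you give --- extract the $LF_w$-factor from the refined Bruhat decomposition of $g^{-1}b\sigma(g)$, check it is well defined modulo $\ker\kappa_w$, and get the torsor structure from the exact sequence \eqref{eq:torus_sequence_kottwitz_map}. One small imprecision: $\sigma_w-1\colon L\bfT\rar L\bfT$ is not surjective (its cokernel is $X_\ast(\bfT)_{\langle\sigma_w\rangle}$); what your fiber computation actually uses, and what \eqref{eq:torus_sequence_kottwitz_map} provides, is that $L\bfT\rar\ker\kappa_w$, $t\mapsto t^{-1}\sigma_w(t)$, is a pro-\'etale $\bfT_w(k)$-torsor, so that the equation $t^{-1}\sigma_w(t)=\dot w\dot w'^{-1}$ is solvable pro-\'etale locally exactly when $\dot w\dot w'^{-1}\in\ker\kappa_w$.
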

\begin{proof}
Combine \cite[Propositions~11.4 and 11.9]{Ivanov_DL_indrep} with Lemma~\ref{lm:pass_to_ext_affine_Weylgroup}.
\end{proof}

We have the following emptyness criterion for $\dot X_{\bar w}(b)$, \textit{i.e.}, a constraint on the image of $\alpha_{w,b}$. Recall the map $\bar\kappa^w \colon \overline F_w\,/\ker\bar\kappa_w \rar \pi_1(\bfG)_{\langle \sigma \rangle}$ from \eqref{eq:barbar_kappa_w}.

\begin{prop}\label{lm:nonemptiness_condition}
Let $b \in \bfG(\breve k)$ and $\bar w \in \overline F_w\,/\ker\bar\kappa_w$. If\, $\dot X_{\bar w}(b) \neq \varnothing$, then $\kappa_{\bfG}(b) = \bar\kappa^w(\bar w)$. In particular, $\alpha_{w,b}$ factors through a map $\alpha_{w,b} \colon X_w(b) \rar (\bar\kappa^w)^{-1}(\kappa_\bfG(b))$.
\end{prop}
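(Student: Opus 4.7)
The plan is to chase the nonemptiness hypothesis through the cartesian diagram defining $\dot X_{\dot w}(b)$ in \S\ref{sec:review_DL_general}, and then apply the Kottwitz homomorphism. Fix a lift $\dot w \in F_w(\breve k)$ of $\bar w$, so that $\dot X_{\bar w}(b) = \dot X_{\dot w}(b)$ by Definition \ref{conv:dotXwb_indep}, and assume $\dot X_{\dot w}(b)(R) \neq \varnothing$ for some nonempty $R \in \Perf_{\obF}$.

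First, I would extract a concrete identity in $L\bfG$. Unwinding the cartesian square produces $gU \in L(\bfG/\bfU)(R)$ with $(gU, b\sigma(g)U) \in L\dot\caO(\dot w)(R)$. Since $\dot\caO(\dot w)$ is the $\bfG$-orbit of $(e\bfU, \dot w\bfU)$, the stabilizer of $e\bfU$ in $\bfG$ is $\bfU$, and $L\bfG \rar L(\bfG/\bfU)$ is pro-\'etale locally surjective, after passing to a suitable pro-\'etale cover $R' \rar R$ I can find $g \in L\bfG(R')$ and $u_1, u_2 \in L\bfU(R')$ with
\[
g^{-1}b\sigma(g) \;=\; u_1\, \dot w\, u_2.
\]

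Second, I would apply the Kottwitz map $\kappa_\bfG \colon L\bfG \rar \pi_1(\bfG)_{\langle\sigma\rangle}$ of \eqref{eq:bar_Kottwitz_map} to this identity. On the right hand side, the restriction of $\tilde\kappa_\bfG$ to the connected ind-scheme $L\bfU$ lands in the discrete group $\pi_1(\bfG)$ and is therefore trivial, so $\kappa_\bfG(u_1 \dot w u_2) = \kappa_\bfG(\dot w)$. On the left, the $\sigma$-equivariance of $\tilde\kappa_\bfG$ gives $\tilde\kappa_\bfG(g^{-1}b\sigma(g)) = -\tilde\kappa_\bfG(g) + \tilde\kappa_\bfG(b) + \sigma(\tilde\kappa_\bfG(g))$ in $\pi_1(\bfG)$, and after passing to $\sigma$-coinvariants the $g$-contributions cancel to leave $\kappa_\bfG(b)$. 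Combining these, $\kappa_\bfG(b) = \kappa_\bfG(\dot w)$ in $\pi_1(\bfG)_{\langle\sigma\rangle}$, and $\kappa_\bfG(\dot w) = \bar\kappa^w(\bar w)$ by construction of $\bar\kappa^w$ in \eqref{eq:barbar_kappa_w}. The factorization of $\alpha_{w,b}$ is then immediate, since by Proposition \ref{prop:from_DL_unram_class_torsors} the fibers $X_w(b)_{\bar w}$ and $\dot X_{\bar w}(b)$ have the same emptyness behavior (the torsor $\bfT_w(k)$ being nonempty).

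The main obstacle is the first step: turning a generic $R$-point of the cartesian diagram into the explicit loop-group identity $g^{-1}b\sigma(g) = u_1\dot w u_2$. This is essentially a point-set manipulation for the loop functor applied to $\bfG \rar \bfG/\bfU$ and requires a careful descent through pro-\'etale covers; once performed, the Kottwitz computation is entirely formal.
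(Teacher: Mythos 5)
Your proposal is correct and follows essentially the same route as the paper: reduce to a point where $g^{-1}b\sigma(g) \in \bfU \dot w \bfU$, observe that $\kappa_\bfG$ kills $\bfU$ and that the $g$-contributions cancel in $\sigma$-coinvariants, then invoke Proposition \ref{prop:from_DL_unram_class_torsors} for the factorization. The only cosmetic difference is that the paper sidesteps your ``main obstacle'' by testing on an algebraically closed field $\ff$ rather than a general $R$ (and justifies $\kappa_\bfG(\bfU)=1$ via Iwahori subgroups rather than connectedness of $L\bfU$), but both variants are fine.
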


\begin{proof}
Let $\dot w \in F_w(\breve k)$ be a lift of $\bar w$. As $\dot X_{\dot w}(b) \neq \varnothing$, there is some algebraically closed field $\ff \in \Perf_{\obF}$ such that $\dot X_{\dot w}(b)(\ff) \neq \varnothing$. Let $\bar g \in \dot X_{\dot w}(b)(\ff)$, and let $g \in \bfG(L)$ be a lift of $\bar g$. Then $g^{-1}b\sigma(g) \in \bfU(L) \dot w \bfU(L)$. We clearly have $\kappa_\bfG(g^{-1}b \sigma(g)) = \kappa_\bfG(b)$. Furthermore, any element of $\bfU(L)$ can be conjugated to some Iwahori subgroup, and the Kottwitz map is trivial on Iwahori subgroups. Thus $\kappa_\bfG(\bfU) = 1$.
Thus $\kappa_\bfG(\bfU(L) \dot w \bfU(L)) = \kappa_\bfG(\dot w)$, and the first claim follows. The last claim follows from the first and Proposition~\ref{prop:from_DL_unram_class_torsors}.
\end{proof}

Recall the map $\beta_w \colon X_\ast(\bfT^{\sssc})_{\langle \sigma_w \rangle} \rar X_{\ast}(\bfT)_{\langle \sigma_w \rangle}$ from \eqref{eq:def_of_beta_w}. We have the following diagram with compatible simply transitive actions:
\begin{equation}\label{eq:diagram_actions_easy}
\xymatrix{
(\bar\kappa^w)^{-1}(\kappa_{\bfG}(b)) \ar@{^(->}[r]               &  \overline F_w\,/\ker\bar\kappa_w \ar@{->>}[r]^{\bar\kappa^w} & \pi_1(\bfG)_{\langle \sigma\rangle } \\
\im(\beta_w) \ar@{^(->}[r]  \ar@{}[u]^{\circlearrowleft}   & X_\ast(\bfT)_{\langle \sigma_w \rangle} \ar@{->>}[r] \ar@{}[u]^{\circlearrowleft}  & \pi_1(\bfG)_{\langle \sigma\rangle }\rlap{,} \ar@{}[u]^{\circlearrowleft} 
}
\end{equation}
where the actions are induced by the left multiplication action of $X_\ast(\bfT)$ on $\overline F_w$.

\begin{cor}\label{cor:condition_when_only_one_torsor}
If\, $\beta_w = 0$, then $\alpha_{w,b}$ factors through a point, \textit{i.e.}, there is at most one $\bar w \in \overline F_w\,/\ker\bar\kappa_w$ such that $\dot X_{\bar w}(b) \neq \varnothing$. In this case, $X_w(b) \neq \varnothing$ if and only if  $\dot X_{\bar w}(b) \neq \varnothing$ and $\dot X_{\bar w}(b) \rar X_w(b)$ is a pro-\'etale $\bfT_w(k)$-torsor. 
\end{cor}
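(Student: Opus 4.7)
The plan is to deduce everything by combining Proposition \ref{prop:from_DL_unram_class_torsors}, Proposition \ref{lm:nonemptiness_condition} and the compatible actions recorded in diagram \eqref{eq:diagram_actions_easy}. First I would observe that in \eqref{eq:diagram_actions_easy} the right vertical map is the identity on $\pi_1(\bfG)_{\langle\sigma\rangle}$ and the bottom horizontal map is surjective by definition of $\pi_1(\bfG)$; combined with the fact that $X_\ast(\bfT)_{\langle\sigma_w\rangle}$ acts simply transitively on $\overline F_w/\ker\bar\kappa_w$, this forces $\bar\kappa^w$ to be surjective and shows that each fiber of $\bar\kappa^w$ is a torsor under $\im(\beta_w)$. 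So in particular $(\bar\kappa^w)^{-1}(\kappa_\bfG(b))$ is a non-empty $\im(\beta_w)$-torsor.

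Under the hypothesis $\beta_w = 0$, this fiber reduces to a single element $\{\bar w\}$. Proposition \ref{lm:nonemptiness_condition} then says that $\alpha_{w,b}$ factors through $(\bar\kappa^w)^{-1}(\kappa_\bfG(b)) = \{\bar w\}$, so the clopen decomposition from Proposition \ref{prop:from_DL_unram_class_torsors} collapses to $X_w(b) = X_w(b)_{\bar w}$ and $X_w(b)_{\bar w'} = \varnothing$ for every $\bar w' \neq \bar w$. Since $\dot X_{\bar w'}(b) \rar X_w(b)_{\bar w'}$ is a $\bfT_w(k)$-torsor by Proposition \ref{prop:from_DL_unram_class_torsors}, emptiness of the base forces $\dot X_{\bar w'}(b) = \varnothing$ for $\bar w' \neq \bar w$, establishing the uniqueness claim.

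For the remaining statements I would apply Proposition \ref{prop:from_DL_unram_class_torsors} to the unique $\bar w$: the map $\dot X_{\bar w}(b) \rar X_w(b) = X_w(b)_{\bar w}$ is a pro-\'etale $\bfT_w(k)$-torsor, and since a torsor has non-empty total space iff its base is non-empty, we obtain $X_w(b) \neq \varnothing \Leftrightarrow \dot X_{\bar w}(b) \neq \varnothing$. There is no real obstacle here: the corollary is essentially a direct unpacking of the previously established diagram and torsor statements, and the only slightly substantive input is the surjectivity of $\bar\kappa^w$, which is read off from the right column of \eqref{eq:diagram_actions_easy}.
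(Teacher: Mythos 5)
Your argument is correct and is exactly the reasoning the paper intends (the corollary is stated without proof as an immediate consequence of Proposition \ref{prop:from_DL_unram_class_torsors}, Proposition \ref{lm:nonemptiness_condition}, the exact sequence \eqref{eq:def_of_beta_w} and the compatible actions in \eqref{eq:diagram_actions_easy}): with $\beta_w=0$ the fiber $(\bar\kappa^w)^{-1}(\kappa_\bfG(b))$ is a singleton, the clopen decomposition collapses, and the torsor statement transfers. The only cosmetic imprecision is your reference to a ``right vertical map'' in \eqref{eq:diagram_actions_easy}, which records compatible group actions rather than vertical morphisms, but the equivariance of $\bar\kappa^w$ that you actually use is exactly what the diagram asserts.
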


\begin{cor}\label{cor:adjoint_Case_torsors}
If\, $\bfG$ is of adjoint type and $w$ is Coxeter, the conclusions of Corollary~\ref{cor:condition_when_only_one_torsor} hold. Moreover, in this case, the map $\dot X_{\bar w}(b) \rar X_w(b)$ is quasi-compact.
\end{cor}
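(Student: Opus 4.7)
My plan is to split the statement into two parts. The first assertion (the conclusions of Corollary \ref{cor:condition_when_only_one_torsor} hold) is an immediate consequence of two earlier results, while the quasi-compactness claim rests on one additional input: the anisotropy of $\bfT_w$.

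For the first assertion, I would simply combine Remark \ref{rem:beta_w_0_adjoint_elliptic}, which gives $\beta_w = 0$ precisely when $\bfG$ is adjoint and $w$ is Coxeter, with Corollary \ref{cor:condition_when_only_one_torsor}. There is nothing else to do, since the latter delivers everything in one go: uniqueness of the non-empty $\dot X_{\bar w}(b)$, the biconditional $X_w(b)\neq\varnothing \LRar \dot X_{\bar w}(b)\neq\varnothing$, and the pro-\'etale $\bfT_w(k)$-torsor structure.

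For quasi-compactness of $\dot X_{\bar w}(b) \rar X_w(b)$, my first step is to verify that $\bfT_w$ is anisotropic over $k$, so that $\bfT_w(k)$ is profinite. Since $\bfG$ is adjoint we have $X_\ast(\bfT)_\bQ = X_\ast(\bfT^{\sssc})_\bQ$, and since $w$ is Coxeter we have $X_\ast(\bfT^{\sssc})^{\langle \sigma_w \rangle} = 0$; together these force $X_\ast(\bfT)^{\langle \sigma_w \rangle} = 0$, so $\bfT_w$ is anisotropic and $\bfT_w(k)$ is compact, hence profinite. Proposition \ref{prop:from_DL_unram_class_torsors} identifies the map with a pro-\'etale $\bfT_w(k)$-torsor. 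Writing $\bfT_w(k) = \varprojlim_n \bfT_w(k)/U_n$ along a cofinal system of open normal subgroups of finite index, the torsor becomes a cofiltered inverse limit, along finite \'etale (hence affine) transition maps, of the finite \'etale torsors $\dot X_{\bar w}(b)/U_n \rar X_w(b)$; each of these is quasi-compact over $X_w(b)$, and quasi-compactness is preserved under such a limit.

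The only point requiring a little care -- not really an obstacle -- is that $X_w(b)$ is merely an arc-sheaf, so ``quasi-compact morphism'' must be read in the appropriate generalised sense. A perhaps cleaner equivalent formulation is to note that $\bfT_w(k)$ is qcqs as an affine profinite scheme (see \S\ref{sec:loc_prof_gps}); the pro-\'etale torsor is then pro-\'etale-locally trivial with qcqs fibre, and quasi-compactness descends along the pro-\'etale cover on which the torsor is trivialised.
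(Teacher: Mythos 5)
Your proposal is correct and follows essentially the same route as the paper: the first assertion is obtained by combining Remark \ref{rem:beta_w_0_adjoint_elliptic} with Corollary \ref{cor:condition_when_only_one_torsor}, and quasi-compactness follows because the map is a torsor under the profinite group $\bfT_w(k)$. The only difference is that you spell out why $\bfT_w(k)$ is profinite (anisotropy of $\bfT_w$ from adjointness plus the Coxeter condition) and how quasi-compactness follows from the torsor structure, details the paper leaves implicit.
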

\begin{proof}
The first claim follows from Corollary~\ref{cor:condition_when_only_one_torsor} and Remark~\ref{rem:beta_w_0_adjoint_elliptic}.
The last claim follows as $\dot X_{\bar w}(b) \rar X_w(b)$ is a torsor under the profinite group $\bfT_w(k)$.
\end{proof}

In the case that $b$ is basic and $w = c$ is Coxeter, the situation simplifies as follows.

\begin{cor}\label{cor:choice_b_cox_over_barw}
Suppose $b$ is basic and $c$ is Coxeter. For any $\bar c \in \overline F_c\,/\ker\bar\kappa_c$ such that $X_{\bar c}(b) \neq \varnothing$, there exists a $b_1 \in [b] \cap F_c(\breve k)$ lying over $\bar c$, and there is a $\bfG_b(k) \times \bfT_c(k)$-equivariant isomorphism $\dot X_{\bar c}(b) \cong \dot X_{\bar c}(b_1)$.
\end{cor}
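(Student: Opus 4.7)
The plan is to combine the non-emptiness criterion of Proposition~\ref{lm:nonemptiness_condition} with the explicit description of $[b] \cap \overline F_c/\ker\bar\kappa_c$ for Coxeter $c$ in Proposition~\ref{cor:rational_classes_Coxeter_case}(ii) to produce the element $b_1$, and then to transport the $p$-adic Deligne--Lusztig space by a $\sigma$-conjugating element.

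First I would extract the Kottwitz condition. From $\dot X_{\bar c}(b) \neq \varnothing$, Proposition~\ref{lm:nonemptiness_condition} forces the identity $\bar\kappa^c(\bar c) = \kappa_\bfG(b)$ in $\pi_1(\bfG)_{\langle\sigma\rangle}$. Next, since $c$ is Coxeter and $b$ is basic, Proposition~\ref{cor:rational_classes_Coxeter_case}(ii) identifies $[b] \cap \overline F_c/\ker\bar\kappa_c$ with the fiber $(\bar\kappa^c)^{-1}(\kappa_\bfG(b))$, so $\bar c$ lies in this set. Unwinding, there exists a lift $b_1 \in F_c(\breve k)$ of $\bar c$ that is $\sigma$-conjugate to $b$; this yields the first assertion of the corollary.

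Having produced $b_1$, I construct the isomorphism by hand. Pick $g \in \bfG(\breve k)$ with $g^{-1}b\sigma(g) = b_1$. Left multiplication by $g^{-1}$ on $L\bfG$ descends to a $\bfG$-equivariant automorphism of $L(\bfG/\bfU)$ which intertwines $b\sigma$ with $b_1\sigma$, i.e.\ one has $(g^{-1}\,\cdot)\circ(b\sigma) = (b_1\sigma)\circ(g^{-1}\,\cdot)$, and which preserves the Bruhat stratum $L\caO(b_1)$. Comparing the two cartesian diagrams defining $\dot X_{b_1}(b)$ and $\dot X_{b_1}(b_1)$ from \S\ref{sec:review_DL_general}, this produces an isomorphism $\dot X_{b_1}(b) \stackrel{\sim}{\longrightarrow} \dot X_{b_1}(b_1)$. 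Since $b_1$ is itself a lift of $\bar c$, Definition~\ref{conv:dotXwb_indep} (and the independence of lift from \cite[Lemma 11.5]{Ivanov_DL_indrep}) identifies the source with $\dot X_{\bar c}(b)$ and the target with $\dot X_{\bar c}(b_1)$.

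Finally, I would check equivariance. The $\bfT_c(k)$-action is by right multiplication on $L(\bfG/\bfU)$, so commutes with left multiplication by $g^{-1}$. The group $\bfG_b(k) = \{h \in \bfG(\breve k) : h^{-1}b\sigma(h) = b\}$ acts by left multiplication, and conjugation by $g^{-1}$ identifies it with $\bfG_{b_1}(k) = \bfG_b(k)$ (as both are the same inner form, with identification determined by $g$); equivariance for this action then follows formally. I do not expect any serious obstacle: the statement is essentially a direct assembly of Propositions~\ref{lm:nonemptiness_condition} and~\ref{cor:rational_classes_Coxeter_case}(ii) together with the standard twisting isomorphism for $\dot X_{\dot w}(b)$. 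The only subtlety is the verification that the construction does not depend on auxiliary choices (the lift of $\bar c$ and the element $g$), which reduces to the independence statement in \cite[Lemma 11.5]{Ivanov_DL_indrep} and the observation that two choices of $g$ differ by left multiplication by an element of $\bfG_b(k)$.
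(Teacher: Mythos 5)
Your argument is correct and coincides with the paper's own proof: the existence of $b_1$ is extracted exactly as you do, from Proposition~\ref{lm:nonemptiness_condition} combined with the identification $[b]\cap\overline F_c/\ker\bar\kappa_c=(\bar\kappa^c)^{-1}(\kappa_\bfG(b))$ of Proposition~\ref{cor:rational_classes_Coxeter_case}(ii), and the isomorphism is the same left-translation by a $\sigma$-conjugating element (the paper simply invokes \cite[Rem.~8.7]{Ivanov_DL_indrep} for the twisting step you carry out by hand).
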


\begin{proof}
By Proposition~\ref{lm:nonemptiness_condition}, $\dot X_{\bar c}(b) \neq \varnothing$ forces $\bar c \in (\bar\kappa^c)^{-1}(\bar\kappa_{\bfG}(b)) = [b] \cap \overline F_c \,/ \ker\bar\kappa_c$, where the equality holds by Proposition~\ref{cor:rational_classes_Coxeter_case}. This implies the existence of $b_1$. For any $g \in \bfG(\breve k)$ such that $g^{-1}b\sigma(g) = b_1$, $x \mapsto gx$ induces an equivariant isomorphism $\dot X_{\bar c}(b) \cong \dot X_{\bar c}(b_1)$ by \cite[Remark~8.7]{Ivanov_DL_indrep}.
\end{proof}

Let us also record the following consequence.

\begin{cor}\label{cor:tori_and_covers}
Suppose $b$ is basic and $c$ is Coxeter. Then $\alpha_{c,b}$ factors through a surjection 
\[
\alpha_{c,b} \colon X_c(b) \longrar [b] \cap \overline F_c\,/\ker\bar\kappa_c.
\]
There is a canonical surjection from the set of different non-empty pieces $X_c(b)_{\bar c} \subseteq X_c(b)$ to the set $\fT(\bfG_b,c)\,/\Ad\bfG_b(k)$ of rational conjugacy classes of unramified Coxeter tori in $\bfG_b$. 

In particular, if\, $\bfG, b$ satisfy the assumptions of Proposition~\ref{prop:Cox_tori_parametrization}, then this surjection is a $1$-to-$1$ correspondence.
\end{cor}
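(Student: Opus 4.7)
The plan is to combine three previously established facts: the emptiness criterion of Proposition~\ref{lm:nonemptiness_condition}, the identification $(\bar\kappa^c)^{-1}(\kappa_\bfG(b)) = [b] \cap \overline F_c/\ker\bar\kappa_c$ furnished by Proposition~\ref{cor:rational_classes_Coxeter_case}(ii), and the parametrization of rational conjugacy classes of tori provided by Proposition~\ref{prop:conj_classes_of_tori_basic_case}(ii).

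First I would verify the factorization of $\alpha_{c,b}$. By Proposition~\ref{lm:nonemptiness_condition}, $\alpha_{c,b}$ lands in $(\bar\kappa^c)^{-1}(\kappa_\bfG(b))$, which by Proposition~\ref{cor:rational_classes_Coxeter_case}(ii) coincides with $[b] \cap \overline F_c/\ker\bar\kappa_c$. To show surjectivity of $\alpha_{c,b}$ onto this set, I fix $\bar c$ in it; by Proposition~\ref{cor:rational_classes_Coxeter_case}(ii) I can lift $\bar c$ to some $b_1 \in [b] \cap F_c(\breve k)$. The element $g = 1$ then defines a point of $X_c(b_1)$, since $1^{-1}b_1\sigma(1) = b_1 \in F_c(\breve k) \subseteq \bfB c \bfB$, and by construction $\alpha_{c,b_1}$ sends it to $\bar c$, so $X_c(b_1)_{\bar c} \neq \varnothing$. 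Since $b_1 \in [b]$, left-translation by any $g \in \bfG(\breve k)$ with $g^{-1}b\sigma(g) = b_1$ yields a $\bfG_b(k)$-equivariant isomorphism $X_c(b) \cong X_c(b_1)$ respecting the decomposition (cf.~\cite[Rem.~8.7]{Ivanov_DL_indrep}), whence $X_c(b)_{\bar c} \neq \varnothing$ and $\bar c \in \im(\alpha_{c,b})$.

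For the second surjection, the first part identifies the set of non-empty pieces of $X_c(b)$ with $[b] \cap \overline F_c/\ker\bar\kappa_c$. Composing the map onto this set with the canonical quotient
\[
[b] \cap \overline F_c/\ker\bar\kappa_c \twoheadrightarrow \left([b] \cap \overline F_c/\ker\bar\kappa_c\right)/C_W(c\sigma) \stackrel{\sim}{\longrightarrow} \fT(\bfG_b,c)/\Ad\bfG_b(k),
\]
where the isomorphism comes from Proposition~\ref{prop:conj_classes_of_tori_basic_case}(ii) (matching source sets via Proposition~\ref{cor:rational_classes_Coxeter_case}(ii)), gives the claimed surjection. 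Under the hypotheses of Proposition~\ref{prop:Cox_tori_parametrization}, the $C_W(c\sigma)$-action on $[b] \cap \overline F_c/\ker\bar\kappa_c$ is trivial, so the composite is actually a bijection. There is no genuine obstacle: the only mildly delicate point is the transfer of non-emptiness from $X_c(b_1)$ to $X_c(b)$ via the left-translation isomorphism, which is permitted precisely because $b$ and $b_1$ are $\sigma$-conjugate; everything else amounts to bookkeeping of the identifications already in place.
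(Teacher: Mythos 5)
Your proof is correct, and for the second and third assertions it coincides with the paper's: both reduce them to Propositions~\ref{prop:conj_classes_of_tori_basic_case}, \ref{cor:rational_classes_Coxeter_case} and \ref{prop:Cox_tori_parametrization} via the composite $[b]\cap\overline F_c/\ker\bar\kappa_c \twoheadrightarrow ([b]\cap\overline F_c/\ker\bar\kappa_c)/C_W(c\sigma) \cong \fT(\bfG_b,c)/\Ad\bfG_b(k)$.

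Where you genuinely diverge is the surjectivity of $\alpha_{c,b}$. The paper deduces it from Theorem~\ref{thm:main_decomposition} (together with the non-emptiness of the integral spaces $\dot X_{\bar c,b}^{\caG_\bfx}$ coming from Lang's theorem), i.e.\ from the main result of the article, which is both much heavier and only established for classical groups. You instead exhibit, for each $\bar c\in[b]\cap\overline F_c/\ker\bar\kappa_c$, a lift $b_1\in[b]\cap F_c(\breve k)$ (possible by Proposition~\ref{cor:rational_classes_Coxeter_case}(ii)), observe that the identity coset gives a point of $X_c(b_1)_{\bar c}$ since $b_1$ itself lies in the Bruhat cell of $c$, and transport non-emptiness back to $X_c(b)$ by the $\sigma$-conjugation isomorphism of \cite[Rem.~8.7]{Ivanov_DL_indrep}, which visibly commutes with $\alpha$. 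This is both more elementary and more general: it makes the surjectivity (hence the corollary) independent of the classicality hypothesis and of the machinery of \S\ref{sec:pDL_on_Coxeter_case}--\ref{sec:type_by_type}, whereas the paper's ``(for example)'' leaves that gap implicit. The only point to be slightly careful about — and you flag it — is that the left-translation isomorphism $X_c(b)\cong X_c(b_1)$ identifies $\tilde x^{-1}b_1\sigma(\tilde x)$ with $(g\tilde x)^{-1}b\sigma(g\tilde x)$ and therefore preserves the labels of the clopen pieces; this is immediate from the definition of $\alpha_{c,b}$.
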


\begin{proof}
The surjectivity of the map $\alpha_{c,b}$ in the corollary follows (for example) from Theorem~\ref{thm:main_decomposition}. The rest immediately follows from Propositions~\ref{prop:conj_classes_of_tori_basic_case},~\ref{cor:rational_classes_Coxeter_case} and~\ref{prop:Cox_tori_parametrization}
\end{proof}

We finish this section with the following remark on the non-basic case.

\begin{rem}\label{rem:Cox_nonbasic_case}
If $[b]$ is non-basic and $c$ Coxeter, then $[b] \cap \overline F_c \,/\ker\bar\kappa_c = \varnothing$ by Lemma~\ref{lm:Cox_gives_basics}. Hence---although we still have $\im(\beta_c)_{\tors} = \im(\beta_c)$ just as in Proposition~\ref{cor:rational_classes_Coxeter_case}---we cannot conclude that $[b] \cap \overline F_c\,/\ker\kappa_c = (\bar\kappa^c)^{-1}(\kappa_\bfG(b))$. In particular, our arguments do not imply that $X_c(b)$ (equivalently, all of its covering spaces $\dot X_{\bar c}(b)$ with $\bar c \in \overline F_c\,/\ker\bar\kappa_c$) is empty. In agreement with this, $X_c(b)$ indeed can be non-empty; \textit{cf.} \cite[Theorem~13.2]{Ivanov_DL_indrep}.
\end{rem}

\section{A variant of Steinberg's cross section}\label{sec:variant_steinberg_crosssection}

By the \emph{Dynkin diagram} $\Dyn(\bfG)$ of the unramified group $\bfG$, we mean the Dynkin diagram of the split group $\bfG_{\breve k}$ endowed with the action induced by the Frobenius $\sigma$. There is a unique decomposition into $\sigma$-stable subdiagrams $\Dyn(\bfG) = \coprod_i \Gamma_i$ such that $\sigma$ cyclically permutes $\pi_0(\Gamma_i)$ for each $i$. For each $i$, let $\Gamma_{i,0}$ be a connected component of $\Gamma_i$, so that $\Gamma_i = \coprod_{j=1}^{d_i}\sigma^j(\Gamma_{i,0})$. Then $(\Gamma_{i,0}, \sigma^{d_i})$ is a \emph{connected} Dynkin diagram of a group over $k_{d_i}$, an unramified extension of $k$ of degree $d_i$.

\begin{Def}\label{def:classical_type}
We say that the unramified group $\bfG$ is \emph{of classical type} if each connected component $(\Gamma_{i,0}, \sigma^{d_i})$ as above of $\Dyn(\bfG)$  is of one of the following types: $A_{n-1}$ ($n\geq 2$), $B_m$ ($m\geq 2$), $C_m$ ($m \geq 2$), $D_m$ ($m \geq 4$), ${}^2A_{n-1}$ ($n \geq 3$) or ${}^2D_m$ ($m \geq 4$).
\end{Def}

Although Theorem~\ref{thm:main_decomposition} and its corollaries essentially hold for all Coxeter elements (\textit{cf.} the discussion preceding Theorem~\ref{thm:main_decomposition}), it will be convenient to work with particular ones. Therefore, we introduce the following auxiliary notion.

\begin{Def}\label{def:special_Coxeter} Let $\bfG$ be of classical type. Let $c \in W$ be a Coxeter element. We call $c$ \emph{special} in the following cases:
\begin{itemize}
\item If $\bfG$ is absolutely almost simple, $c$ is special if it is of the form\footnote{The explicit shape of these elements only becomes important when everything boils down to explicit calculations in Section~\ref{sec:type_by_type}. For the sake of better readability, we postpone the explicit description until the relevant notation is introduced.} given in \eqref{eq:special_Coxeter_element_type_A} (resp.\  \eqref{eq:special_Coxeter_type_C}, \eqref{eq:special_Coxeter_type_B}, \eqref{eq:special_Coxeter_type_D}, \eqref{eq:special_Coxeter_element_type_2A}, \eqref{eq:special_Coxeter_element_type_2D}) if $\bfG$ is of type $A_{n-1}$ (resp.\ $C_m$, $B_m$, $D_m$, ${}^2A_{n-1}$, ${}^2D_m$). 
\item If $\bfG = \Res_{k'/k}\bfG'$ for a finite unramified extension $k'/k$ and an absolutely almost simple $k'$-group $\bfG'$, we may identify the Weyl group $W$ of $\bfG$ with $\prod_{i=1}^{[k':k]} W'$, where $W'$ is the Weyl group of $\bfG'$ and $\sigma$ acts cyclically. Then $c$ is special if it is of the form $(c',1,\dots,1)$, where $c'$ is a special Coxeter element of $W'$. 
\item If $\bfG$ is isogeneous to a product $\prod_{i=1}^m \bfG_i$ of almost simple $k$-groups $\bfG_i$, we may identify $W = \prod_{i=1}^m W_i$, where $W_i$ is the Weyl group of $\bfG_i$. Then $c = (c_i)_{i=1}^m$ is special if all $c_i$ are special.
\end{itemize}
\end{Def}

\subsection{A loop version of Steinberg's cross section}\label{sec:steinberg_crosssection_technical}
For classical groups and special Coxeter elements, we now prove the existence of certain Steinberg cross sections in our setup. This is a Frobenius-twisted loop version of \cite[Proposition~8.9]{Steinberg_65} and \cite[Theorem~3.6 and  Section~3.14]{HeL_12}, which will be an important ingredient in the proof of Theorem~\ref{thm:main_decomposition}. 

Let $c\in W$ be a Coxeter element. If $b \in \bfG(\breve k)$ is any lift of $c$, then $b \sigma$ has a unique fixed point $\bfx = \bfx_b$ on $\caA_{\bfT,\breve k}$. Let $\caG_{\bfx}$ denote the corresponding parahoric $\caO_{\breve k}$-model of $\bfG$. 
We have the twisted Frobenius $\sigma_b = \Ad(b)\circ \sigma$ on $L\bfG,L^+\caG_\bfx$.
Let $({}^c\bfU \cap \bfU)_{\bfx}$ be the closure of ${}^c\bfU \cap \bfU$ in $\caG_{\bfx}$. Similarly, we have the closures $({}^c\bfU \cap \bfU^-)_{\bfx}$ and $({}^c\bfU)_{\bfx}$ of ${}^c\bfU \cap \bfU^-$ and ${}^c\bfU$ in $\caG_{\bfx}$. 

\begin{prop}\label{prop:loop_Steinbergs_twisted_crosssection}
Let $\bfG$ be of classical type and $c \in W$ a special Coxeter element. Then 
\[
\alpha_b \colon L({}^c\bfU \cap \bfU) \times L({}^c\bfU \cap \bfU^-) \longrar L({}^c\bfU), \quad (x,y) \longmapsto x^{-1} y \sigma_b(x) 
\]
is an isomorphism. 
The injectivity of $\alpha_b$ holds for arbitrary elements $c \in W$ which are of minimal length in an elliptic $\sigma$-conjugacy class $($\textit{cf.} \cite{HeL_12}\/$)$.

Furthermore, $\alpha_b$ restricts to an isomorphism
\[
\alpha_{b,\bfx} \colon L^+({}^c\bfU \cap \bfU)_{\bfx} \times L^+({}^c\bfU \cap \bfU^-)_{\bfx} \longrar L^+({}^c\bfU)_{\bfx}.
\]
\end{prop}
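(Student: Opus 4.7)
The plan is to treat injectivity and surjectivity separately, and then deduce the integral statement from the loop-level one. The injectivity part is the easier half and works in the stated generality (arbitrary $c$ of minimal length in an elliptic $\sigma$-conjugacy class); surjectivity is where the assumption that $\bfG$ is classical and $c$ is special becomes essential.

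For injectivity, suppose $\alpha_b(x_1,y_1)=\alpha_b(x_2,y_2)$. Setting $z:=x_2x_1^{-1}\in L({}^c\bfU\cap\bfU)$ and rearranging yields an identity of the form $z\,y_1\,\sigma_b(z)^{-1}=y_2$ inside $L({}^c\bfU)$. I would filter $L({}^c\bfU)$ by a height function on ${}^c\Phi^+$ adapted to $c$ (i.e.\ use the ordering coming from a reduced expression of $c$), mimicking \cite[Thm.~3.6 and~\S3.14]{HeL_12}: passing to the associated graded one obtains a linearized equation on each root space of the form $(1-\sigma_b\circ\text{Ad}\,c^{-1})(\xi)=0$. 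Ellipticity plus minimal length means that no nontrivial $\sigma$-invariant vector exists in the relevant $W$-coinvariants, so the only solution is $\xi=0$; induction on height then gives $z=1$ and hence $y_1=y_2$. This entire argument proceeds sheaf-theoretically on $R$-points, so Ax--Grothendieck is never invoked.

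For surjectivity, I would first reduce, using the inductive definition of ``special'' in Definition~\ref{def:special_Coxeter}, to the case of an absolutely almost simple $\bfG$ over $k$ (products and Weil restrictions split both sides of $\alpha_b$). From there the argument becomes an explicit type-by-type calculation for types $A_{n-1}$, $B_m$, $C_m$, $D_m$, ${}^2A_{n-1}$, ${}^2D_m$. In each case one fixes the enumeration of roots in ${}^c\Phi^+$ dictated by the special form of $c$ from \eqref{eq:special_Coxeter_element_type_A}--\eqref{eq:special_Coxeter_element_type_2D}, writes an element $u\in L({}^c\bfU)(R)$ coordinate-wise in root subgroups, and solves the equation $x^{-1}y\,\sigma_b(x)=u$ inductively along the chosen order. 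The special shape of $c$ is designed so that $\sigma_b$ combined with ${}^c\bfU\cap\bfU$-conjugation acts triangularly on the coordinates: each new coordinate of $x$ (respectively $y$) is determined by a polynomial equation in $\bW(R)[1/\varpi]$ involving only previously solved coordinates and those of $u$. The main obstacle is precisely verifying this triangularity, which is why the computation is deferred to \S\ref{sec:type_by_type}.

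For the integral statement $\alpha_{b,\bfx}$, the key observation is that since $b\sigma$ fixes $\bfx$, the twisted Frobenius $\sigma_b$ stabilizes $L^+\caG_{\bfx}$ and hence each of $L^+({}^c\bfU\cap\bfU)_{\bfx}$, $L^+({}^c\bfU\cap\bfU^-)_{\bfx}$, $L^+({}^c\bfU)_{\bfx}$. Injectivity of $\alpha_{b,\bfx}$ is inherited from that of $\alpha_b$. For surjectivity, one reruns the iterative procedure inside the positive loop spaces: the triangular system solving root-by-root for $x_\alpha$ and $y_\alpha$ uses only the ring operations of $\bW(R)$, with no inversion of $\varpi$, because the parahoric $\caG_{\bfx}$ is exactly the one attached to the $b\sigma$-fixed point $\bfx$, so all the relevant structure constants lie in $\caO_{\breve k}$. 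Thus the solution $(x,y)$ automatically lies in the claimed integral loop subspaces, completing the proof.
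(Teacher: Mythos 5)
Your overall architecture (injectivity in the general minimal-length elliptic case, surjectivity by reduction to the absolutely almost simple case followed by a root-by-root induction, and the integral statement by re-running the induction inside $L^+$) matches the paper's. For injectivity the paper is in fact more economical than your sketch: after two changes of variables it invokes \cite[3.14(b)]{HeL_12} verbatim for the ring $A=\bW(R)[1/\varpi]$ with $\chi=\phi_R^{-1}$, so no new filtration or linearization is needed; your appeal to ``no nontrivial $\sigma$-invariant vector in the relevant $W$-coinvariants'' is not how that injectivity is established and would itself need justification.

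The genuine gap is in your surjectivity step. The system obtained by solving $x^{-1}y\sigma_b(x)=u$ root by root is \emph{not} triangular, and the coordinates of $x$ are \emph{not} ``determined by a polynomial equation involving only previously solved coordinates'': writing out the condition that $xu\sigma_b(x)^{-1}$ land in $L({}^c\bfU\cap\bfU^-)$, the unknown coordinate $x_\beta$ is coupled to $\phi(x_{\beta'})$ for the root $\beta'$ with $\sigma(c(\beta'))=\beta$, and this map on the set of roots of ${}^c\bfU\cap\bfU$ has genuine cycles. On each graded quotient one therefore faces Frobenius-difference equations of the form $\xi-\varepsilon\phi^m(\xi)=v$ on $L\bG_a$, which have no solution in $\bW(R)[1/\varpi]$ for general $R$ and are surjective only as maps of pro-\'etale sheaves --- this is exactly the content of Lemmas \ref{lm:loop_twisted_Lang} and \ref{lm:Lang_for_LGa} (pro-Lang when $\varepsilon$ is a unit, a $\varpi$-adic convergence argument otherwise). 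What specialness of $c$ buys, and what is actually verified type by type in \S\ref{sec:type_by_type}, is not triangularity of the full system but the existence of the filtration $\Psi_1\supseteq\dots\supseteq\Psi_r$ of Lemma \ref{lm:filtration_on_roots_Coxeter}, whose conditions (1)--(3) make the twisted-Lang equations decouple on the successive abelian quotients $C_i/C_{i+1}$. Without isolating this Lang-type input your induction does not close; and it is the $L^+$-version of the same input, not merely integrality of structure constants, that makes the argument for $\alpha_{b,\bfx}$ go through.
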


\begin{rem}
It looks very plausible that Proposition~\ref{prop:loop_Steinbergs_twisted_crosssection} holds for arbitrary Coxeter elements $c$. It should be possible to prove this by adapting \cite[Section~3.5, Theorem~3.6 and Section~3.14]{HeL_12} to the present setting. Let us make the relation of Proposition~\ref{prop:loop_Steinbergs_twisted_crosssection} with \cite{HeL_12} more clear. With obvious notation (in particular, $c$ is Coxeter and $b$ any lift of it), \cite[Theorem~ 3.6]{HeL_12} says that $\bfU \times (\bfU \cap {}^b \bfU^-) \rar \bfU b \bfU$, $(u,z) \mapsto uz\sigma(u)^{-1}$ is an isomorphism if $\bfG$ is reductive over a finite field with Frobenius morphism $\sigma$ (and, in  fact, in various other setups). As an easy corollary of this, \cite[Section~3.14]{HeL_12} states that $\alpha_b' \colon (\bfU \cap {}^{\sigma^{-1}(c)^{-1}}\bfU) \times (\bfU \cap {}^c\bfU^-) \rar \bfU$, $(x,y)\mapsto xy \sigma_{b}(x)^{-1}$ is an isomorphism in the same setup. Now the finite field version (without the loop functors) of the map $\alpha_b$ of Proposition~\ref{prop:loop_Steinbergs_twisted_crosssection} is obtained from $\alpha_b'$ by a series of changes of variables (\textit{cf.} the first paragraph of the proof of Proposition~\ref{prop:loop_Steinbergs_twisted_crosssection}), so that $\alpha_b$ is an isomorphism if and only if $\alpha_b'$ is.
\end{rem}

\begin{proof}[Proof of Proposition~\ref{prop:loop_Steinbergs_twisted_crosssection}]
We work with the pro-\'etale topology on $\Perf_{\obF}$ and regard $\alpha_b$ as a map of pro-\'etale sheaves. The injectivity of $\alpha_b$ may be derived from \cite{HeL_12}. First, change the variable by setting $y = \sigma_b(y_1)$. It remains to show that $L(\bfU \cap {}^c\bfU) \times L(\bfU \cap {}^{\sigma^{-1}(c^{-1})}\bfU^-) \rar L({}^c\bfU)$, $x,y_1 \mapsto x^{-1}\sigma_b(y_1 x)$ is injective. Compose this map with the isomorphism $(\cdot)^{-1} \circ \sigma^{-1} \circ \Ad(b) \colon L({}^c\bfU) \rar L\bfU$. It remains to show that this composition, given by $x,y_1 \mapsto x^{-1}y_1^{-1}\sigma^{-1}(b^{-1} x b)$, is injective. Finally, change the variables by putting $x = x_2^{-1}$ and $y_1 = y_2^{-1}$, and set $\tilde{b} := \sigma^{-1}(b)^{-1}$ with image $\tilde c$ in $W$. We are reduced to showing that $L(\bfU \cap {}^{\sigma(\tilde{c})^{-1}}\bfU) \times L(\bfU \cap {}^{\tilde c}\bfU^-) \rar L\bfU$, $x_2,y_2 \mapsto x_2y_2\tilde{b} \sigma^{-1}(x_2)^{-1}\tilde{b}^{-1}$ is injective. This follows from \cite[Section~3.14(b)]{HeL_12} applied to $A = \bW(R)[1/\varpi]$ (for any $R \in \Perf_{\obF}$) and $\chi = \phi_R^{-1}$.

We now show the surjectivity of $\alpha_b$. We may assume that $\bfG$ is of adjoint type. Then $\bfG = \prod_{i=1}^r \bfG_i$ with $\bfG_i$ almost simple $k$-groups, and we are immediately reduced to the case that $\bfG$ is almost simple over $k$. 

\begin{lm}\label{lm:crosssection_reduction_abs_almost_simple}
Let $k_d/k$ be an unramified extension of degree $d \geq 1$, and suppose that $\bfG = \Res_{k_d/k} \bfG'$ for a $k_d$-group $\bfG'$. If the surjectivity claim for $\alpha_b$ of Proposition~\ref{prop:loop_Steinbergs_twisted_crosssection} holds for $\bfG'$, then it holds for $\bfG$.
\end{lm}

\begin{proof}
Identify $\bfG_{\breve k} = \prod_{i=1}^d \bfG'_{\breve k}$ and similarly for the Weyl groups $W$, $W'$ and unipotent radicals $\bfU, \bfU'$ of Borels of $\bfG, \bfG'$, so that the Frobenius $\sigma$ permutes cyclically the components. As $c$ is special, $c = (c',1,\dots,1)$ for a special Coxeter element $c' \in W'$.  
We then have ${}^c\bfU\cap\bfU = ({}^{c'}\bfU' \cap \bfU') \times \bfU' \times \dots \times \bfU'$ and ${}^c\bfU \cap \bfU^- = ({}^{c'}\bfU' \cap \bfU^{\prime -}) \times 1 \times \dots \times 1$. The lift $b$ of $c$ is then given by $(b',\tau_2,\dots,\tau_d)$ for a lift $b'$ of $c'$ to $\bfG'(\breve k)$ and some $\tau_i \in \bfT'(\breve k)$. Then the map $\alpha_b$ from Proposition~\ref{prop:loop_Steinbergs_twisted_crosssection} is given by
\[(x_i)_{i=1}^d, y_1 \longmapsto (x_1^{-1}y_1 \Ad(b')(\sigma'(x_d)), x_2^{-1}\Ad(\tau_2)(x_1), \dots x_d^{-1}\Ad(\tau_2)(x_{d-1})). \]
From this, surjectivity of $\alpha_b$ for $\bfG$ immediately reduces to surjectivity of $\alpha_{b'}$ for $\bfG'$.
\end{proof}

\begin{lm}\label{lm:conjugation_crosssecion_proof}
If $\widetilde\alpha_b \colon L(\bfU \, \cap \, {}^{c^{-1}}\bfU) \times L(\bfU \,\cap\, {}^{c^{-1}}\bfU^-) \rar L\bfU$ denotes the map $\widetilde\alpha_b(x,y) = x^{-1}y\sigma(b x b^{-1})$, then  $\Ad(b^{-1})(\alpha_b(x,y)) = \widetilde\alpha_b(\Ad(b^{-1})(x), \Ad(b^{-1})(y))$.
\end{lm}

\begin{proof}
This follows immediately from a  computation. 
\end{proof}

As $\bfG$ splits over $\breve k$, Lemma~\ref{lm:crosssection_reduction_abs_almost_simple} reduces us to the case that $\bfG$ is absolutely almost simple over $k$.
It suffices to show that the map $\widetilde\alpha_b$ from Lemma~\ref{lm:conjugation_crosssecion_proof} is surjective. Write $A = L(\bfU \cap {}^{c^{-1}}\bfU)$, $B = L(\bfU \cap {}^{c^{-1}}\bfU^-)$, $C = L\bfU$. For $\star \in \{A,B,C\}$, write $\Phi_\star = \{\alpha \in \Phi \colon L\bfU_\alpha \subseteq \star \}$, where $\bfU_\alpha \subseteq \bfG$ denotes the root subgroup corresponding to~$\alpha$.
We call a subset $\Psi \subseteq \Phi$ \emph{closed under addition} if for any $\alpha, \beta \in \Psi$ with $\alpha + \beta \in \Phi$, we have $\alpha + \beta \in \Psi$.

\begin{lm}\label{lm:filtration_on_roots_Coxeter}
There exist a positive integer $r$ and subsets
\[
\Phi_C = \Psi_1 \supseteq \Psi_2 \supseteq \dots \supseteq \Psi_r = \Phi_B 
\]
satisfying the following conditions:
\begin{enumerate}
\item\label{forC-1} For all $1\leq i \leq r$, $\Psi_i$ and $\Psi_i \sm \Psi_r$ are closed under addition.
\item\label{forC-2} For all $1 \leq i < r$ and all $\alpha, \beta \in \Psi_i$, the implication $\alpha + \beta \in \Phi \Rar \alpha + \beta \in \Psi_{i+1}$ holds.
\item\label{forC-3} For all $1 \leq i \leq r$, $\sigma(c(\Psi_i \sm \Psi_r)) \subseteq \Psi_i$.
\end{enumerate}
\end{lm}

\begin{proof} 
We give the desired partitions in Section~\ref{sec:proof_of_filtration_lm_type_An1} (type $A_{n-1}$), Section~\ref{sec:proof_of_crosssec_lemma_type_CB} (types $B_m,C_m$), Section~\ref{sec:proof_of_crosssec_lemma_type_D} (type $D_m$), Section~\ref{sec:proof_of_crosssec_lemma_type_2An1} (type ${}^2A_{n-1}$) and Section~\ref{sec:proof_of_crosssec_lemma_type_2Dm} (type ${}^2D_m$) after the relevant notation is introduced. In each case, it is straightforward to check that the conditions of the lemma hold for the given partition;  we omit this checking.
\end{proof}

\begin{lm}\label{lm:loop_twisted_Lang}
Let $I$ be a finite set not containing $0$. Let $\lambda \colon I \rar I \cup \{0\}$ be a map such that $\lambda^{-1}(i)$ has at most one element if\, $i \in I$. For any $i \in I \cap \lambda(I)$, let $\varepsilon_i \in \breve k^\times$. Consider the map $\delta \colon \prod_{i \in I}L\bG_a \rar \prod_{i \in I}L\bG_a$ given by $\delta((a_i)_{i\in I}) = (b_i)_{i \in I}$, where $b_i = 0$ if $\lambda^{-1}(i) = \varnothing$ and $b_i = \varepsilon_i \phi(a_{\lambda^{-1}(i)})$ otherwise. Then the map $x \mapsto x - \delta(x)$ is surjective for the pro-\'etale topology.
\end{lm}

\begin{proof}
There are some integers $s\geq r\geq 0$ and a disjoint decomposition $I = \coprod_{i=1}^r I_i \dot\cup \coprod_{i=r+1}^sI_i$ such that for $1\leq i \leq r$, $\lambda$ restricts to a map $I_i \rar I_i$ which is a full cycle of length $\#I_i$ (first type), and for $r+1 \leq i \leq s$, $\lambda$ restricts to a map $I_i \rar I_i \dot\cup \{0\}$, which is, after choosing an isomorphism $I_i \cong \{1,\dots,m\}$, given by $\lambda(j) = j-1$ for all $j \in I_i$ (second type). It suffices to show that the restriction of $\delta$ to $\prod_{j \in I_i} L\bG_a$ is surjective. Thus we may assume that $I$ is one of the $I_i$. If $I = \{1,\dots,m\}$ is of the second type, one has the filtration $G^\bullet$ of $G = \prod_{i=1}^m L\bG_a$ by closed subschemes (ind-group schemes) $G_j = \prod_{i=1}^j L\bG_a$, stable by $\delta$, such that $x\mapsto x-\delta(x)$ induces the identity on the associated graded object $\gr\,G^\bullet$. From this, the result easily follows. If $I$ is of the first type, we may assume $I = \bZ/m\bZ$ and $\lambda(i) = i+1$. Replacing $\phi$ with $\phi^m$, one is then immediately reduced to the case that $m=1$ and $\delta \colon L\bG_a \rar L\bG_a$, $\delta(x) =           \varepsilon\phi(x)$ for some $\varepsilon \in \breve k^\times$. Then Lemma~\ref{lm:Lang_for_LGa} finishes the proof.
\end{proof}

\begin{lm}\label{lm:Lang_for_LGa}
Let $\varepsilon \in \breve k$. Then the map $\id - \varepsilon\phi \colon L\bG_a \rar L\bG_a$, $x \mapsto x-\varepsilon\phi(x)$ is surjective for the pro-\'etale topology.
\end{lm}

\begin{proof}
If $\varepsilon \not\in \caO_{\breve k}^\times$, then $\id - \varepsilon\phi$ is an isomorphism. Indeed, if $\ord_{\varpi}(\varepsilon) > 0$, $\sum_{i=0}^\infty (\varepsilon\phi)^i$ is a meaningful endomorphism of $L\bG_a$ (as $\bW(R)$ is $\varpi$-adically complete for each $R \in \Perf$) and the inverse of $\id - \varepsilon\phi$. If $\ord_\varpi(\varepsilon) < 0$, a similar argument goes through using $\id - \varepsilon\phi = -\varepsilon\phi (\id - (\varepsilon\phi)^{-1})$. Now assume $\varepsilon \in \caO_{\breve k}^\times$. We have $L\bG_a = \colim_{m \rar -\infty}\varpi^m L^+\bG_a$, and it suffices to show that $\id - \varepsilon \phi$ is surjective on $L^+\bG_a = \prolim_{r \rar \infty} L_r^+\bG_a$. This follows from a pro-version of Lang's theorem. \qedhere
\end{proof}

We now prove the surjectivity of $\alpha_b$. For a subset $\Psi \subseteq \Phi^+$ closed under addition, let $\bfU_\Psi$ denote the unique closed subgroup of $\bfU$ whose $\breve k$-points are generated by $\{\bfU_\alpha(\breve k) \}_{\alpha \in \Psi}$. As a $\breve k$-scheme, $\bfU_\Psi$ is isomorphic to $\prod_{\alpha \in \Psi} \bfU_\alpha$. Write $U_\Psi = L\bfU_\Psi$. Let $\{\Psi_i\}_{i=1}^r$ be a filtration as in Lemma~\ref{lm:filtration_on_roots_Coxeter}. For $1\leq i \leq r$, we have the closed subschemes (ind-group schemes) $C_i := U_{\Psi_i}$ of $L\bfU$, with $C_1 = C$, $C_r = B$. Moreover, condition~\eqref{forC-2} from Lemma~\ref{lm:filtration_on_roots_Coxeter} guarantees that $C_{i+1} \subseteq C_i$ is a closed normal subgroup, with abelian quotient. As $H^1(k,\bfH) = 0$ for a split unipotent $k$-group $\bfH$, we have $C_i/C_{i + 1} \cong \prod_{\alpha \in \Psi_i \sm \Psi_{i+1}} L\bfU_\alpha$. Moreover, by condition~\eqref{forC-1}, for $1\leq i\leq r-1$, $C_i$ has the closed subgroup $A_i = U_{\Psi_i \sm \Psi_r}$, and condition~\eqref{forC-2} implies that $A_{i+1}$ is normal in $A_i$ and that the inclusion $A_i \har C_i$ induces an isomorphism $\gamma_i \colon A_i/A_{i+1} \isor C_i/C_{i+1}$. Moreover, by condition~\eqref{forC-3}, the map $\sigma \circ \Ad(b) \colon A_i \rar C_i$ induces a map $\delta_i \colon A_i/A_{i+1} \rar C_i/C_{i+1}$. The composition $\gamma_i^{-1} \delta_i$ is an endomorphism of $A_i/A_{i+1} \cong \prod_{\alpha \in \Psi_i \sm \Psi_{i+1}}L\bfU_\alpha$, which is of the form given in Lemma~\ref{lm:loop_twisted_Lang}. From Lemma~\ref{lm:loop_twisted_Lang} it follows that $a \mapsto a\cdot (\gamma_i^{-1} \delta_i)(a)^{-1}$ is a surjective endomorphism of $A_i/A_{i+1}$.

Let $R \in \Perf_{\obF}$. Let $x_i \in C_i(R)$, and let $\bar x_i$ be its image in $(C_i/C_{i+1})(R)$. By the above, after replacing $R$ with a pro-\'etale cover, we may find some $a_i \in A_i(R)$ with image $\bar a_i \in (A_i/A_{i+1})(R)$ such that $\bar a_i \cdot (\gamma_i^{-1}\delta_i)(\bar a_i^{-1}) = \gamma_i^{-1}(\bar x_i)$. Applying $\gamma_i$, we deduce the equation $\gamma_i(\bar a_i)^{-1}\bar x_i\delta_i(\bar a_i) = 1$ in $(C_i/C_{i+1})(R)$. The left-hand side is precisely the image of $\widetilde\alpha_{b}(a_i,x_i) \in C_i(R)$ in $(C_i/C_{i+1})(R)$. In other words, for each $x_i \in C_i(R)$, we may find (after possibly enlarging $R$) some $a_i \in A_i$ such that $\widetilde\alpha_{b}(a_i,x_i) \in C_{i+1}$. Starting with an arbitrary $x = x_1 \in C(R)$ and applying this procedure consequently for $i=1,2,\dots,r-1$, we deduce that after possibly enlarging $R$, there are some $a_1,\dots, a_{r-1} \in A(R)$ such that for $a = a_{r-1} a_{r-2}\dots a_1$, we have $\widetilde \alpha_{b}(a,x) \in C_r(R) = B(R)$. (Here we use that $\widetilde\alpha_b(b,\widetilde\alpha_{b}(a,x)) = \widetilde\alpha_b(ab,x)$.) This finishes the proof that $\alpha_{b}$ is an isomorphism.


Next we turn to $\alpha_{b,\bfx}$. Note that it is well defined as for $x \in L^+({}^c\bfU \cap \bfU)_{\bfx}$, $\sigma_b(x) \in L({}^c\bfU) \cap L^+\caG_{\bfx} = L^+({}^c\bfU_{\bfx})$, the intersection taken in $L\bfG$. Being the restriction of $\alpha_{b}$, it is injective, as $\alpha_{b}$ is, and it remains to prove its surjectivity. Applying $\Ad(b^{-1})$, which conjugates $\caG_{\bfx}$ to $\caG_{\bfy}$, where $\bfy = b^{-1}(\bfx)$, and the analogue of Lemma~\ref{lm:conjugation_crosssecion_proof}, it suffices to show that (with the obvious notation) the map $\widetilde\alpha_{b,\bfx} \colon L^+(\bfU \cap {}^{c^{-1}}\bfU)_{\bfy} \times L^+(\bfU \cap ^{c^{-1}}\bfU^-)_{\bfy} \rar L^+\bfU_{\bfy}$, $\widetilde\alpha_{b}(x,y) = x^{-1}y\sigma(b x b^{-1})$ is surjective. By the same reasoning as in Lemma~\ref{lm:crosssection_reduction_abs_almost_simple} (and the paragraph preceding it), we are reduced to the case that $\bfG$ is absolutely almost simple over $k$. For $\Psi \subseteq \Phi^+$, let $\bfU_{\Psi,\bfy}$ denote the closure of $\bfU_{\Psi}$ in $\caG_{\bfy}$. We have $\bfU_{\Psi,\bfy} \cong \prod_{\alpha \in \Psi} \bfU_{\alpha, \bfy}$. Replacing $C_i,B,A_i$ above with $C_i^+ = L^+\bfU_{\Psi_i,\bfy}$, $B^+ = L^+(\bfU \cap {}^{c^{-1}}\bfU^-)$, $A_i^+ = L^+\bfU_{\Psi_i \sm \Psi_r,\bfy}$ and using the integral version of Lemma~\ref{lm:loop_twisted_Lang} (with essentially the same proof), we can run the above inductive argument for the quotients $A_i^+/A_{i+1}^+$, deducing the surjectivity of $\alpha_{b,\bfx}$.
\end{proof}

\subsection{A variant of a Deligne--Lusztig space}\label{sec:various_reformulations_DL_spaces}

Throughout this section we work with the arc-topology on $\Perf_{\obF}$. Steinberg's cross section allows us to give a different presentation of the Deligne--Lusztig spaces $X_c(b)$, $\dot X_{\bar c}(b)$. We have the following general definition. 

\begin{Def}\label{def:alternative_spaces_Xwb} Let $w \in W$, $\dot w \in F_w(\breve k)$ and $b \in \bfG(\breve k)$. Define $\dot X_{\dot w,b}$ and $\tilde X_{w,b}$ by the following cartesian diagrams:

\centerline{\begin{tabular}{cc}
\begin{minipage}{2in}
\begin{displaymath}
\leftline{
\xymatrix{
\dot X_{\dot w,b} \ar[r] \ar@{^(->}[d] & L({}^w\bfU \cap \bfU^-)\cdot \dot w b^{-1} \ar@{^(->}[d] \\
L\bfG \ar[r]^{g \mapsto g^{-1}\sigma_b(g)} & L\bfG
}
}
\end{displaymath}
\end{minipage}
& \qquad\qquad  and \qquad\qquad
\begin{minipage}{2in}
\begin{displaymath}
\leftline{
\xymatrix{
\tilde X_{w,b} \ar[r] \ar@{^(->}[d] & L({}^w\bfU \cap \bfU^-)\cdot L\bfT\ar@{^(->}[d] \\
L\bfG \ar[r]^{g \mapsto g^{-1}\sigma_b(g)} & L\bfG\rlap{,}
}
}
\end{displaymath}
\end{minipage}
\end{tabular}
}
\smallskip
\noindent and let $X_{w,b}:= \tilde X_{w,b}/L\bfT$.
\end{Def}

\begin{rem}\label{rem:definition_of_tilde_space_Xwb}
The map $g\mapsto g^{-1}\sigma_b(g) \colon L\bfG \rar L\bfG$ factors through $\kappa_\bfG^{-1}(\kappa_\bfG(b)) \subseteq L\bfG$. Thus the map $\tilde X_{c,b} \rar ({}^c\bfU\cap \bfU^-)\cdot L\bfT$ factors through $({}^c\bfU\cap \bfU^-)\cdot \kappa_{\bfG}|_{L\bfT}^{-1}(\kappa_{\bfG}(b))$. So, in the upper right entry of the right diagram of Definition~\ref{def:alternative_spaces_Xwb}, we could have written $({}^c\bfU\cap \bfU^-)\cdot \kappa_{\bfG}|_{L\bfT}^{-1}(\kappa_{\bfG}(b))$ without changing the definition.
\end{rem}

The group $\bfG_b(k) \times \bfT_w(k)$ acts on $\dot X_{\dot w,b}$ by $g,t\colon x \mapsto gxt$, and $\bfG_b(k)$ acts on $X_{w,b}$ by $g \colon x \mapsto gx$. We only use these spaces in the case that $w = c$ is a (special) Coxeter element.

\begin{prop}\label{prop:isom_dotXbw_Xdoubleprime}
Let $b \in \bfG(\breve k)$. Let $c \in W$ be a Coxeter element and $\dot c \in F_c(\breve k)$. Whenever the conclusion of Proposition~\ref{prop:loop_Steinbergs_twisted_crosssection} holds $($\textit{e.g.}, $\bfG$ classical, $c$ special Coxeter$)$, the following hold:  
\begin{enumerate}
\item\label{idX-1} There is a $\bfG_b(k) \times \bfT_c(k)$-equivariant isomorphism $\dot X_{\dot c}(b) \cong \dot X_{\dot c,b}$.

\item\label{idX-2} If\, $b \in F_c(\breve k)$,
then there is a $\bfG_b(k)$-equivariant isomorphism $X_c(b) \cong X_{c,b}$. 
\end{enumerate}
\end{prop}

\begin{proof} 
\eqref{idX-1} 
Consider the sheafification $\dot X'$ of the functor on $\Perf_{\obF}$,
\[
X_1 \colon R \longmapsto \left\{g \in L\bfG(R) \colon g^{-1}\sigma_b(g) \in L({}^c\bfU)(R) \dot c b^{-1} = \dot c L\bfU(R)b^{-1} \right\}/ L(\bfU \cap {}^{c}\bfU)(R). 
\]
By \cite[Proposition~12.1]{Ivanov_DL_indrep} $X_{\dot c}(b)$ and $\dot X'$ are equivariantly isomorphic.
It remains to show that $\dot X' \cong \dot X_{\dot c,b}$ equivariantly. Note that $\dot X_{\dot c,b}$ is the sheafification of 
\[
X_2 \colon R \longmapsto \left\{ g \in L\bfG(R) \colon g^{-1}\sigma_b(g) \in L({}^c\bfU \cap \bfU^-)(R) \cdot \dot cb^{-1} \right\}.
\]
There is an obvious equivariant map $X_2 \rar X_1$, of which we claim that it is an isomorphism. This follows once we check that
\[
L({}^c\bfU \cap \bfU) \times L({}^c\bfU \cap \bfU^-) \cdot \dot c b^{-1} \longrar \dot c L\bfU b^{-1} = L({}^c\bfU) \cdot \dot c b^{-1}, \quad x,y' \longmapsto x^{-1}y'\sigma_b(x)
\]
is an isomorphism. Substituting $y' = y \dot c b^{-1}$, with $y$ ranging in $L({}^c \bfU \cap \bfU^-)$, this map becomes $(x,y) \mapsto \left(x^{-1}y\dot c \sigma(x) \dot c^{-1}\right) \cdot \dot c b^{-1}$, which is an isomorphism by Proposition~\ref{prop:loop_Steinbergs_twisted_crosssection}. The proof of~\eqref{idX-2} is completely analogous.
\end{proof}

\begin{prop}\label{prop:Xwb_other_description}
Suppose $\bfG$ is of adjoint type. Let $c \in W$ be as in Proposition~\ref{prop:isom_dotXbw_Xdoubleprime} and $b \in F_c(\breve k)$. For any $\dot c \in F_c(\breve k)$ satisfying $\kappa_\bfG(b) = \kappa_\bfG(\dot c)$, there are $\bfG_b(k)$-equivariant isomorphisms $X_c(b) \cong X_{c,b} \cong \dot X_{\dot c,b}\,/\,\bfT_c(k)$.
\end{prop}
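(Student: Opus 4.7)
The first isomorphism $X_c(b) \cong X_{c,b}$ is just Proposition~\ref{prop:isom_dotXbw_Xdoubleprime}(ii), so only the second needs to be produced. My plan is to parametrize $\tilde X_{c,b}$ by the $\bfT$-component of $g^{-1}\sigma_b(g)$ and identify $\dot X_{\dot c, b}$ as a distinguished fiber. Since the $\bfG_b(k)$-action on all spaces is by left multiplication and commutes with every right $L\bfT$- and $\bfT_c(k)$-action appearing below, $\bfG_b(k)$-equivariance will come for free.

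Concretely, I would use the Bruhat decomposition $L({}^c\bfU \cap \bfU^-) \cdot L\bfT \cong L({}^c\bfU \cap \bfU^-) \times L\bfT$ to define a projection $\pi_T \colon \tilde X_{c,b} \rar L\bfT$ sending $g$ to the $\bfT$-part of $g^{-1}\sigma_b(g)$. Here one uses that $b \in F_c(\breve k)$, which gives $\sigma_b|_{L\bfT} = \sigma_c$ and also makes right multiplication by $L\bfT$ preserve $\tilde X_{c,b}$ (since $L\bfT$ normalizes $L({}^c\bfU\cap \bfU^-)$). By Remark~\ref{rem:definition_of_tilde_space_Xwb}, $\pi_T$ factors through the coset $T^{\dot c b^{-1}} := \kappa_\bfG|_{L\bfT}^{-1}(\kappa_\bfG(b)) = \dot c b^{-1}\cdot\ker\kappa_\bfG|_{L\bfT}$, using the hypothesis $\kappa_\bfG(\dot c)=\kappa_\bfG(b)$. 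A short calculation (using that $\bfT$ is abelian) will give $\pi_T(gt) = \pi_T(g) \cdot t^{-1}\sigma_c(t)$, so $\pi_T$ is $L\bfT$-equivariant for the twisted Lang action $s \mapsto s\cdot t^{-1}\sigma_c(t)$ on $T^{\dot c b^{-1}}$. By construction the fiber of $\pi_T$ over $\dot c b^{-1}$ is precisely $\dot X_{\dot c, b}$.

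The crux is then to show that $L\bfT$ acts simply transitively on $T^{\dot c b^{-1}}$ with stabilizer $\bfT_c(k)$. By the exact sequence \eqref{eq:torus_sequence_kottwitz_map}, the twisted Lang map has image $\ker\kappa_c$ and kernel $\bfT_c(k)$; combining the factorization of $\kappa_\bfG|_{L\bfT}$ through $\kappa_c$ with \eqref{eq:def_of_beta_w} gives $\ker\kappa_\bfG|_{L\bfT}/\ker\kappa_c \cong \im(\beta_c)$. The decisive input is Remark~\ref{rem:beta_w_0_adjoint_elliptic}, which yields $\beta_c = 0$ precisely because $\bfG$ is adjoint and $c$ is Coxeter; this forces $\ker\kappa_\bfG|_{L\bfT} = \ker\kappa_c$ and hence simple transitivity. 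Once this is in place, $\tilde X_{c,b}$ is identified with the contracted product $\dot X_{\dot c, b} \times^{\bfT_c(k)} L\bfT$, and quotienting by $L\bfT$ will yield $X_{c,b} \cong \dot X_{\dot c,b}/\bfT_c(k)$. The only remaining (mild) subtlety is topological: since $\sigma_c-1 \colon L\bfT \rar \ker\kappa_c$ is a pro-\'etale $\bfT_c(k)$-torsor, forming the quotient is well-behaved in both the arc- and pro-\'etale topologies, so the identification $\tilde X_{c,b}/L\bfT \cong \dot X_{\dot c, b}/\bfT_c(k)$ is genuine.
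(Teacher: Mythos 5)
Your argument is correct and follows essentially the same route as the paper: there the two quotient presheaves are compared directly, with injectivity coming from the stabilizer computation ($\sigma_c(t)=tu$ forces $u=1$, hence $t\in\bfT_c(k)$) and surjectivity from transitivity of the $\sigma_c$-twisted Lang action of $L\bfT$ on the relevant coset, which — exactly as in your write-up — reduces via pro-Lang and \eqref{eq:torus_sequence_kottwitz_map} to $\beta_c=0$ from Remark \ref{rem:beta_w_0_adjoint_elliptic}. Your fibration/contracted-product packaging is the same computation with the same key inputs, so nothing further is needed.
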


\begin{proof}
  By Proposition~\ref{prop:isom_dotXbw_Xdoubleprime}, it suffices to show that $\dot X_{\dot c,b} \,/ \bfT_c(k) \cong X_{c,b}$. Writing
  \[\tau := \dot c^{-1}b \in (L\bfT \cap \kappa_\bfG^{-1}(\kappa_\bfG(b)))(\obF),\]
  these sheaves are the sheafifications of the quotient presheaves
\begin{align*}
Y_1 \colon R &\longmapsto \{g \in L\bfG(R) \colon g^{-1}\sigma_b(g) \in L(\bfU^- \cap {}^c\bfU)(R) \cdot \tau  \}\,/\, \bfT_c(k)(R) \qquad \quad \text{and}\\
Y_2 \colon R &\longmapsto \{g \in L\bfG(R) \colon g^{-1}\sigma_b(g) \in L(\bfU^- \cap {}^c\bfU)(R)\cdot L\bfT(R) \}\,/ L\bfT(R),
\end{align*}
respectively. We have an obvious map $Y_1 \rar Y_2$, and we claim its sheafification is an isomorphism. First, the map between presheaves itself is injective (which already implies that its sheafification is injective, as quotient presheaves are separated). Indeed, let $g,h \in Y_1(R)$. Then $\sigma_b(g) = g u_g \tau$, $\sigma_b(h) = hu_h\tau$ for some $u_g,u_h \in L(\bfU^- \cap {}^c\bfU)(R)$. Suppose their images in $Y_2(R)$ agree, \textit{i.e.}, there is some $t \in L\bfT(R)$ such that $gt=h$. Then $\sigma_b(g)\sigma_b(t) = \sigma_b(h)$, and hence $gu_g\tau \sigma_b(t)= hu_h\tau$; \textit{i.e.}, 
\[
\sigma_c(t) = \sigma_b(t) = \tau^{-1} u_g^{-1} t u_h \tau
\]
(the first equality holds as $b \in F_c(\breve k)$). Thus, $\sigma_c(t) = t u$ for some $u \in L(\bfU^- \cap {}^c\bfU)(R)$, which implies $u = 1$, and hence $t \in (L\bfT)^{\sigma_c}(R) = \bfT_c(k)(R)$. 

Secondly, let $g \in L\bfG(R)$ satisfy $\sigma_b(g) = g u_g \tau'$ for some $u \in ({}^c\bfU\cap\bfU^-)(R)$ and $\tau' \in L\bfT(R)$. By Remark~\ref{rem:definition_of_tilde_space_Xwb}, $\kappa_\bfG(\tau') = \kappa_\bfG(b)$. Changing $g$ to $gt$ for some $t \in L\bfT(R)$ has no effect on the image of $g$ in $Y_2(R)$ but amounts to replacing $\tau'$ with $\tau' t^{-1}\sigma(t)$. Thus, to show that $Y_1 \rar Y_2$ is surjective when passing to sheafification, it remains to check that the $\sigma_c$-twisted action morphism 
\[ L\bfT \times (L\bfT \cap \kappa_\bfG^{-1}(\kappa_\bfG(b))) \longrar L\bfT \cap \kappa_\bfG^{-1}(\kappa_\bfG(b)), \quad t,\tau' \longmapsto t^{-1}\tau'\sigma_c(t)
\]
is surjective for the arc-topology. In fact, it is even surjective for the pro-\'etale topology. Indeed, the space $L\bfT \cap \kappa_\bfG^{-1}(\kappa_\bfG(b))$ is fibered over $(\bar\kappa_\bfG|_{X_\ast(\bfT)})^{-1}(\kappa_\bfG(b))$, all fibers being isomorphic to $L^+\caT$. Using pro-Lang for the $\sigma_c$-stable connected subscheme $L^+\caT \subseteq L\bfT$, we are reduced to showing that the map of discrete abelian groups $X_\ast(\bfT) \times (\bar\kappa_\bfG|_{X_\ast(\bfT)})^{-1}(\kappa_\bfG(b)) \rar (\bar\kappa_\bfG|_{X_\ast(\bfT)})^{-1}(\bar\kappa_\bfG(b))$ given $\mu,\chi \mapsto \chi + \sigma_c(\mu) - \mu$ is surjective, \textit{i.e.}, that the fibers of $\bar\kappa_\bfG|_{X_\ast(\bfT)}$ are torsors under $\im(\sigma_c - \id \colon X_\ast(\bfT) \rar X_\ast(\bfT)) = \ker\bar\kappa_c$. This holds by Remark~\ref{rem:beta_w_0_adjoint_elliptic} as $\bfG$ is adjoint and $c$ Coxeter.
\qedhere
\end{proof}

\section{Decomposition into integral \texorpdfstring{$\boldsymbol{p}$}{p}-adic spaces}\label{sec:pDL_on_Coxeter_case}

In this section we begin with the proof of Theorem~\ref{thm:main_decomposition}. Before that, we show that this theorem is indeed sufficiently general to describe \emph{all} spaces $X_c(b)$ and $\dot X_{\bar c}(b)$ for an unramified classical group $\bfG$, $c$ Coxeter and $b$ basic.

\begin{prop}\label{prop:simplifying_assumptions}
  Let $b \in \bfG(\breve k)$ be basic, $c \in W$ Coxeter and $\bar c \in \overline F_c\,/\ker\bar\kappa_c$. Suppose $\dot X_{\bar c}(b) \neq \varnothing$. 
Then there exist
\begin{itemize}
\item a special Coxeter element $c_1 \in W$ $($\textit{cf.} Definition~\ref{def:special_Coxeter}\,$)$, 
\item a lift $\bar c_1 \in \overline F_{c_1}/\ker\bar\kappa_{c_1}$ and 
\item an element $b_1 \in [b] \cap F_c(\breve k)$ which lies over $\bar c_1$
\end{itemize}
such that $X_c(b) \cong X_{c_1}(b_1)$ and $\dot X_{\bar c}(b) \cong \dot X_{\bar c_1}(b_1)$, $\bfG_b(k)$- $($resp.\ $\bfG_b(k) \times \bfT_c(k)$-$)$equivariantly.
\end{prop}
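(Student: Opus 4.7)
The plan is to reduce to the desired form by two successive equivariant transformations: first adjusting $b$ within its $\sigma$-conjugacy class so that it lies over $\bar c$ in $F_c(\breve k)$, and then conjugating by a Weyl group element to pass from $c$ to a special Coxeter $c_1$.

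For the first step, I would apply Corollary \ref{cor:choice_b_cox_over_barw}. Since $b$ is basic, $c$ is Coxeter, and $\dot X_{\bar c}(b) \neq \varnothing$, this corollary produces an element $b' \in [b] \cap F_c(\breve k)$ lying over $\bar c$ together with a $\bfG_b(k) \times \bfT_c(k)$-equivariant isomorphism $\dot X_{\bar c}(b) \cong \dot X_{\bar c}(b')$; passing to the $\bfT_c(k)$-quotient via Proposition~\ref{prop:from_DL_unram_class_torsors} yields the corresponding $\bfG_b(k)$-equivariant isomorphism $X_c(b) \cong X_c(b')$. So we may assume that $b$ itself lies in $F_c(\breve k)$ and projects to $\bar c$.

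For the second step, I would invoke Springer's theorem that the twisted Coxeter elements of $(W,\sigma)$ form a single $\sigma$-conjugacy class in $W$, cf.\ \cite[\S7]{Springer_74}. Combined with the concrete description provided by Definition \ref{def:special_Coxeter}, this produces $v \in W$ and a special Coxeter element $c_1 \in W$ such that $v^{-1} c \sigma(v) = c_1$. Choosing any lift $\dot v \in \bfN(\breve k)$ of $v$, I would set
\[
b_1 \;:=\; \dot v^{-1}\, b\, \sigma(\dot v) \;\in\; F_{c_1}(\breve k),
\]
which is $\sigma$-conjugate to $b$ (hence basic with $[b_1]=[b]$) and lifts $c_1$. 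Define $\bar c_1 \in \overline F_{c_1}/\ker\bar\kappa_{c_1}$ as the image of $b_1$; by construction $b_1$ lies over $\bar c_1$. Then I would verify that right multiplication by $\dot v$ on $L\bfG$ descends to the required equivariant isomorphisms
\[
X_c(b) \;\stackrel{\sim}{\rar}\; X_{c_1}(b_1), \qquad \dot X_{\bar c}(b) \;\stackrel{\sim}{\rar}\; \dot X_{\bar c_1}(b_1).
\]
This is a direct inspection of the cartesian diagrams in \S\ref{sec:review_DL_general}: right multiplication by $\dot v$ commutes with the left $\bfG(\breve k)$-action on $L(\bfG/\bfB)$ and $L(\bfG/\bfU)$, carries the Bruhat orbit $L\caO(c)$ (resp.\ a chosen lift of $\bar c$) to $L\caO(c_1)$ (resp.\ to the corresponding lift of $\bar c_1$), and intertwines $b\sigma$ with $b_1\sigma$. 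The identification $\bfT_c(k) \cong \bfT_{c_1}(k)$ induced by $\Ad(\dot v^{-1})$ handles the torus-equivariance.

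The main obstacle is the second step: it must be verified that the explicit shapes stipulated in Definition \ref{def:special_Coxeter} really do provide representatives in every $\sigma$-conjugacy class of Coxeter elements of $W$. This is a type-by-type check, relying on Springer's classification of regular elements for each irreducible Dynkin type in the list of Definition \ref{def:classical_type} and compatibility under direct products and Weil restriction. Once existence of a special $c_1$ in the class of $c$ is granted, the remainder is a formal manipulation of the Deligne--Lusztig sheaves under $\sigma$-conjugation.
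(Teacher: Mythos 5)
Your first step is essentially the paper's: Corollary \ref{cor:choice_b_cox_over_barw} lets you replace $b$ by an element of $[b]\cap F_c(\breve k)$ lying over $\bar c$, and since the isomorphism there is induced by left multiplication by a $g$ with $g^{-1}b\sigma(g)=b_1$, it gives $X_c(b)\cong X_c(b_1)$ on the nose (you do not need to pass through the torsor quotient, which a priori would only identify the single piece $X_c(b)_{\bar c}$). The paper performs the two reductions in the opposite order, but that is immaterial.

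The gap is in your second step. Right multiplication by $\dot v$ does not descend to $L(\bfG/\bfB)$: the assignment $g\bfB\mapsto g\dot v\bfB$ is not well defined, because $\bfB\dot v\bfB\neq\dot v\bfB$ for $v\neq 1$; likewise it does not descend to $L(\bfG/\bfU)$ unless $v$ normalizes $\bfU$. Nor does it intertwine the Frobenii: conjugating $b\sigma$ by $g\mapsto g\dot v$ yields the map $h\mapsto b\sigma(h)\,\sigma(\dot v)^{-1}\dot v$, which is a right-translation twist of $b\sigma$, not $b_1\sigma=\mathrm{Ad}(\dot v^{-1}b\sigma(\dot v))\circ\sigma$. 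So the ``direct inspection of the cartesian diagrams'' does not go through, and in fact no single translation map realizes $X_c(b)\cong X_{c_1}(b_1)$ for a $\sigma$-conjugate Coxeter element $c_1$. The correct mechanism is the Deligne--Lusztig exchange (cyclic shift): writing $c=sc'$ reduced, one has $\bfB c\bfB=\bfB s\bfB\, c'\bfB$, and sending $g\bfB$ to the intermediate flag $g'\bfB$ in relative position $s$ from $g\bfB$ produces an equivariant isomorphism $X_{sc'}(b)\cong X_{c'\sigma(s)}(b)$; iterating such moves connects $c$ to a special Coxeter element. This is precisely what the paper invokes via \cite[Cor.~8.18]{Ivanov_DL_indrep} (for $X_c(b)$) and \cite[Lm.~8.16]{Ivanov_DL_indrep} (for the covers, which also produces $\bar c_1$), after which $b_1$ is chosen by Corollary \ref{cor:choice_b_cox_over_barw}. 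Relatedly, you identify the ``main obstacle'' as checking that the elements of Definition \ref{def:special_Coxeter} represent every $\sigma$-conjugacy class of Coxeter elements; that point is comparatively harmless (the twisted Coxeter elements form a single $\sigma$-conjugacy class, and one anyway only needs that they are connected by cyclic shifts), whereas the real content of the step is the isomorphism of Deligne--Lusztig spaces under such shifts, which your argument does not establish.
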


\begin{proof}
By \cite[Corollary~8.18]{Ivanov_DL_indrep} there exists a special Coxeter element $c_1$ such that $X_c(b) \cong X_{c_1}(b)$ equivariantly. Moreover, we may find some $\bar c_1$ over $c_1$ such that $\dot X_{\bar c}(b)$ is equivariantly isomorphic to $\dot X_{\bar c_1}(b)$; \textit{cf.} \cite[Lemma~8.16]{Ivanov_DL_indrep}. By Corollary~\ref{cor:choice_b_cox_over_barw} we may pick $b_1 \in [b] \cap F_{c_1}(\breve k)$ lying over $\bar c_1$.
\end{proof}

The rest of the article is devoted to the proof of Theorem~\ref{thm:main_decomposition}. Thus we start with
\begin{itemize}
\item a special Coxeter element $c \in W$ and $\bar c \in \overline F_c\,/\ker\bar\kappa_c$,
\item a basic element $b$ which lies in $F_c(\breve k)$ and lifts $\bar c$.
\end{itemize}

In Section~\ref{sec:reduction_of_main_1_to_adjoint} we reduce \eqref{eq:main_iso_1} to the case that $\bfG$ is absolutely almost simple and adjoint. In Section~\ref{sec:deco} we then prove \eqref{eq:main_iso_1} by a $v$-descent argument, assuming the technical Proposition~\ref{prop:Xbb_XbbO_equal} (which essentially is \eqref{eq:main_iso_2} for absolutely almost simple adjoint groups, \textit{cf.} Corollary~\ref{cor:disj_dec_on_U_level}). Then it remains to deduce \eqref{eq:main_iso_2} from \eqref{eq:main_iso_1}. This is done in Section~\ref{sec:proof_of_main_iso_2}. Finally, in Section~\ref{sec:type_by_type} we prove the remaining Proposition~\ref{prop:Xbb_XbbO_equal}, which is the most technical part of the proof. We also point out that specialness of $c$ is only used in the proof of Proposition~\ref{prop:Xbb_XbbO_equal} and nowhere else in the proof of Theorem~\ref{thm:main_decomposition}.

\subsection{Reduction of {\bf{\eqref{eq:main_iso_1}}} to the adjoint absolutely almost simple case}\label{sec:reduction_of_main_1_to_adjoint}

First, note that both sides of the equality claimed in the theorem only depend on $\bfG^{\ad}$ (for the left-hand side, use that $\bfG/\bfB = \bfG^{\ad}/\bfB^{\ad}$). Hence we may assume that $\bfG$ is of adjoint type. Then $\bfG$ decomposes as a direct product of almost simple $k$-groups, and both sides of the claimed isomorphism \eqref{eq:main_iso_1} do so accordingly. Thus we are reduced to the case that $\bfG$ is  of adjoint type and almost simple.

If $\bfG$ is adjoint and almost simple, then $\bfG = \Res_{k'/k}\bfG'$ for some finite unramified extension $k'/k$ and some absolutely almost simple $k'$-group $\bfG'$. As $B(\bfG) = B(\bfG')$, \textit{cf.} \cite[1.10]{Kottwitz_85}, (which allows us to change $b$ appropriately inside its $\sigma$-conjugacy class), Lemma~\ref{lm:DL_res_of_schalars} reduces us to the case that $\bfG$ is adjoint absolutely almost simple.

We state and prove Lemma~\ref{lm:DL_res_of_schalars} in bigger generality than just for adjoint absolutely almost simple groups and Coxeter elements. Let us fix notation first. Let $k'/k$ be a finite unramified extension of degree $d$, let $\bfH'$ be an unramified reductive group over $k'$, with maximal quasi-split torus $\bfT'$. Let $\bfH = \Res_{k'/k}\bfH'$. Let $W'$ and $W \cong \prod_{i=1}^d W'$ be the Weyl groups of $(\bfH', \bfT')$ and $(\bfH,\Res_{k'/k}{\bf\bfT})$. Identifying the buildings of $\bfH$ over $k$ and of $\bfH'$ over $k'$, any point $\bfx \in \caA_{\bfT,\breve k}$ defines a parahoric model $\caH'_{\bfx}$ of $\bfH'$, and we have $\caH_{\bf x} = \Res_{\caO_{k'}/\caO_k} \caH'_{\bfx}$; \textit{cf.} \cite[Proposition~4.7]{Haines_Richarz_20}. Note that for definition \eqref{eq:integral_Coxeter_Stcsss_space} to work, we only need that the affine transformation $b\sigma$ fixes $\bfx$ (assumptions on $c$ to be Coxeter and on $b$ to be basic are redundant), so part~\eqref{Dros-2} of the following lemma makes sense.

\begin{lm}\label{lm:DL_res_of_schalars}
  Let $w' \in W'$, $b' \in \bfH'(\breve k)$, and let $w = (w',1,\dots,1) \in W$, $b = (b',1,\dots,1) \in \bfH(\breve k)$.
\begin{enumerate}
\item\label{Dros-1} We have $X_w^{\bfH}(b) \cong X_{w'}^{\bfH'}(b')$. 
\item\label{Dros-2} If\, $b' \in F_{w'}(\breve k)$ is such that $b'\sigma'$ fixes $\bfx$, then $\dot X_{\bar w,b}^{\caG_x} \cong \dot X_{\bar w',b'}^{\caG'_x}$ and $X_{w,b}^{\caG_x} \cong X_{w',b'}^{\caG'_x}$, where $\bar w$ $($resp.\ $\bar w')$ is the image of $b$ $($resp.\ $b')$ in $\overline F_w\,/\ker\bar\kappa_w$ $($resp.\ $\overline F_{w'}/\ker\bar\kappa_{w'})$,
\end{enumerate}
These natural isomorphisms are compatible with the natural identifications $\bfH(k) = \bfH'(k')$ and $\caH(\caO_k) = \caH'(\caO_{k'})$.
\end{lm}

\begin{proof}
  \eqref{Dros-1} Let $\bfB'$ be a $k'$-rational Borel subgroup of $\bfH'$ which contains $\bfT'$, and let $\bfB = \Res_{k'/k}\bfB'$ be the corresponding Borel subgroup of $\bfH$. As $\breve k$-groups, we have $\bfH \cong \prod_{i=1}^d\bfH'$, with Frobenius permuting the components cyclically. As $L(\cdot)$ commutes with products, we have $L(\bfH/\bfB) = \prod_{i=1}^d L(\bfH'/\bfB')$. Let $g = (g_1,\dots,g_d)\in L(\bfG/\bfB)$. Then $g \in X_w(b)$ if and only if the relative position of $(g_1,\dots,g_d)$ and $(\sigma'(g_d),g_1,\dots,g_{d-1})$ is $(w',1,\dots,1)$, where $\sigma'$ is the geometric Frobenius on $L(\bfG'/\bfB')$. This hold if and only if $g_1 = g_2 = \dots = g_d$ and $g_1 \in X_{w'}^{\bfH'}(b)$. Thus $g \mapsto g_1$ defines the claimed isomorphism.

  \eqref{Dros-2} Just as above, we have $\caH_{\bfx} \cong \prod_{i=1}^d \caH_{\bfx}'$. The same holds after applying $L^+(\cdot)$. Moreover, ${}^w\bfU \cap \bfU^- = ({}^{w'}\bfU' \cap \bfU^{\prime -}, 1, \dots ,1)$. Thus, $g = (g_1,\dots,g_d) \in L^+\caH_{\bfx}$ lies in $\dot X_{w,b}^{\caH_{\bfx}}$ if and only if 
\[ 
g^{-1}\sigma_b(g) = (g_1^{-1}\Ad(b')(\sigma'(g_d)), g_2^{-1}g_1, \dots, g_d^{-1}g_{d-1})) \in (L^+({}^{w'}\bfU' \cap \bfU^{\prime -}), 1, \dots ,1),
\]
which is the case if and only if $g_1 \in \dot X_{w,b}^{\caH_{\bfx}'}$ and the $g_i$ ($i=2,\dots,d$) are determined by appropriate formulas in terms of $g_1$. Thus $g\mapsto g_1$ defines the required isomorphism. The last isomorphism follows from the first.
\end{proof}

Altogether, in the proof of \eqref{eq:main_iso_1} we may assume $\bfG$ to be absolutely almost simple and adjoint.

\subsection{Disjoint decomposition}\label{sec:deco} Here we reduce \eqref{eq:main_iso_1} to Proposition~\ref{prop:Xbb_XbbO_equal}.
Using Proposition~\ref{prop:isom_dotXbw_Xdoubleprime} we may replace the left-hand side in \eqref{eq:main_iso_1}, resp.\ \eqref{eq:main_iso_2}, by $X_{c,b}$, resp.\ $\dot X_{b,b}$ (as $b$ lies over $\bar c$). Consider the following (auxiliary) subsheaf of $\dot X_{b,b}$:
\begin{center}
\begin{displaymath}
\xymatrix{
\dot X_{b,b,\caO} \ar[r] \ar@{^(->}[d] & L^+({}^c\bfU \cap \bfU^-)_{\bfx} \ar@{^(->}[d] \\
L\bfG \ar[r]^{g \mapsto g^{-1}\sigma_b(g)} & L\bfG\rlap{.}
}
\end{displaymath}
\end{center}

We then have the following result, whose proof occupies Section~\ref{sec:type_by_type}.

\begin{prop}\label{prop:Xbb_XbbO_equal} Suppose $\bfG$ is absolutely almost simple and adjoint. We have $\dot X_{b,b} = \dot X_{b,b,\caO}$.
\end{prop}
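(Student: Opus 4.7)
The inclusion $\dot X_{b,b,\caO} \subseteq \dot X_{b,b}$ is tautological, since $L^+({}^c\bfU \cap \bfU^-)_{\bfx} \subseteq L({}^c\bfU \cap \bfU^-)$, so the content of the proposition is the reverse inclusion. Given $R \in \Perf_{\obF}$ and $g \in L\bfG(R)$ with $y := g^{-1}\sigma_b(g) \in L({}^c\bfU \cap \bfU^-)(R)$, the plan is to show that $y \in L^+({}^c\bfU \cap \bfU^-)_{\bfx}(R)$. Both objects being arc-sheaves, I would first reduce to the case where $R = \ff$ is an algebraically closed field over $\obF$, work with $L = \bW(\ff)[1/\varpi]$, and using that $c$ is Coxeter decompose ${}^c\bfU \cap \bfU^- = \prod_{\alpha \in S} \bfU_\alpha$ for a certain set $S$ of negative roots of cardinality $\ell(c)$; writing $y = (y_\alpha)_{\alpha \in S}$, the claim becomes a lower bound on each $v(y_\alpha)$ in terms of the affine root values at $\bfx$.

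The driving principle is that the equation $\sigma_b(g) = gy$ ties $b$ and $y$ together as presenting the same isocrystal up to isomorphism. Choosing a faithful representation $\bfG \hookrightarrow \mathbf{GL}(V)$ appropriate to the type of $\bfG$ (standard for $A_{n-1}$, symplectic for $C_m$, orthogonal for $B_m, D_m$, and the natural representation of the quasi-split inner form over a quadratic extension for ${}^2A_{n-1}, {}^2D_m$), the isocrystal $(V_L, b\sigma)$ is basic with a single rational Newton slope (together with the duality forced on the polygon by the classical type). Since the choice of the parahoric base point $\bfx$ was precisely the unique fixed point of $b\sigma$, this slope is numerically tied to the affine root values at $\bfx$, and the very rigid shape of the Newton polygon for basic Coxeter data is the ``special property'' that governs the argument.

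The plan is to carry out this rigidity argument type by type, using the explicit special Coxeter elements from Definition \ref{def:special_Coxeter}. For type $A_{n-1}$ one chooses $c = s_1 s_2 \cdots s_{n-1}$ so that a convenient product $yb$ (or $b y$) acts on $V_L$ as a companion-like matrix whose characteristic polynomial has coefficients that are essentially the $y_\alpha$, up to units and shifts; the Newton polygon constraint pins down the valuations of these coefficients, and hence of the $y_\alpha$, to be exactly those required for integrality at $\bfx$. For $C_m$, $B_m$ and $D_m$ the same approach works with the standard representation, the symmetry of the Newton slopes producing enough equations to control all root coordinates. For the outer types ${}^2A_{n-1}$, ${}^2D_m$, one first base-changes to the quadratic unramified extension $k_2/k$ that splits the outer form, applies the already-established inner analysis there, and then descends the integrality statement back to $k$ using that $\bfx$ is fixed by the full $\sigma_b$-action.

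The principal obstacle is the sixfold type-by-type analysis, and within it the outer types ${}^2A_{n-1}$ and ${}^2D_m$, because the nontrivial Galois action on the Dynkin diagram makes the decomposition ${}^c\bfU \cap \bfU^- = \prod_\alpha \bfU_\alpha$ less uniform across $\sigma$-orbits and requires careful tracking of twisting constants when relating valuations in the standard representation over $k_2$ to the affine root conditions at $\bfx$ over $k$. I expect that, once the explicit forms \eqref{eq:special_Coxeter_element_type_A}, \eqref{eq:special_Coxeter_type_C}, \eqref{eq:special_Coxeter_type_B}, \eqref{eq:special_Coxeter_type_D}, \eqref{eq:special_Coxeter_element_type_2A}, \eqref{eq:special_Coxeter_element_type_2D} are unpacked, each case reduces to a direct valuation computation on a matrix of highly restricted shape; the role of Proposition \ref{prop:loop_Steinbergs_twisted_crosssection} is precisely to guarantee that $y$ lands in such a pinned-down slice of $L({}^c\bfU)$ where this computation is feasible.
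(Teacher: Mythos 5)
Your plan is essentially the paper's proof: reduce to geometric points over an algebraically closed field $\ff$, realize $(V_0\otimes_k L, b\sigma)$ as an isoclinic isocrystal in the standard representation for each classical type, exhibit a cyclic vector from the columns of $g$, and read off the required valuation bounds on the coordinates of $y$ from the Newton polygon of the resulting ``companion'' relation $\varphi^N(v)=\sum_i A_i\varphi^i(v)$ (this is exactly Lemma \ref{lm:isocrystal_lemma}), carried out type by type with the special Coxeter elements.

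Two points in your write-up would need repair before the plan runs. First, $\bfG$ is \emph{adjoint}, so it has no faithful standard representation of the kind you invoke; the paper instead passes to a central extension $\bfG\to\bfG^{\rm ad}$ with unramified torus kernel $\bfZ$ (e.g.\ ${\bf GL}_n$, ${\bf GSp}_{2m}$, ${\bf GSO}$) and lifts points of $\dot X_{\bar b,\bar b}^{\rm ad}(\ff)$ via Lemma \ref{lm:lift_to_zext}; this forces one to control the kernel of $X_\ast(\bfZ)_{\langle\sigma\rangle}\to\pi_1(\bfG)_{\langle\sigma\rangle}$, which is nontrivial precisely in type ${}^2A_{n-1}$ with $n$ even and there requires the extra computation of \S\ref{sec:type_2An1_even_extra_comp} with the twisted space $\dot X_{b\varpi^\zeta,b}$. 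Second, for the outer types your ``base-change to $k_2$ and descend'' is not a formal descent: after squaring the Frobenius the relevant equation becomes $\varphi(g_i)=g\,(y\cdot{}^{b}\sigma(y))\,b^{\rm spl}(e_i)$, so the unipotent datum is $y\cdot{}^b\sigma(y)$ (quadratic in the $a_i$) rather than $y$, and the element $c\sigma(c)$ is not the split special Coxeter element; one must rerun the cyclic-vector/Newton-polygon computation directly for the $\phi^2$-linear isocrystal (and in ${}^2D_m$ the paper avoids squaring altogether by twisting with $h_0$). Also note that the cyclicity of $v=g(e_1)$ is not automatic in types $B_m$ and $D_m$: one needs an auxiliary vector $u$ and a case division according to the vanishing of $a_m$, $a_{m-1}$ and of the quantity $\mu$, which your appeal to ``symmetry of the Newton slopes'' elides. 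Finally, Proposition \ref{prop:loop_Steinbergs_twisted_crosssection} plays no role inside this proposition; its job was already done in Proposition \ref{prop:isom_dotXbw_Xdoubleprime} to put $y$ into $L({}^c\bfU\cap\bfU^-)$ in the first place.
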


\begin{cor}\label{cor:disj_dec_on_U_level} Suppose $\bfG$ is absolutely almost simple and adjoint. 
We have a $\bfG_b(k)\times \bfT_c(k)$-equivariant isomorphism 
\[ 
\dot X_{b,b} = \coprod_{\gamma \in \bfG_b(k)/\caG_{\bfx,b}(\caO_k)} \gamma \dot X_{\bar c,b}^{\caG_{\bfx}},
\] 
where $\dot X_{\bar c,b}^{\caG_{\bfx}}$ is as in \eqref{eq:integral_Coxeter_Stcsss_space}. 
\end{cor}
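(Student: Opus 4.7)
By Proposition~\ref{prop:Xbb_XbbO_equal} we may replace the left-hand side by $\dot X_{b,b,\caO}$. The key observation is that $\dot X_{b,b,\caO}$ and $\dot X_{\bar c,b}^{\caG_{\bfx}}$ are cut out by the \emph{same} equation $g^{-1}\sigma_b(g) \in L^+({}^c\bfU \cap \bfU^-)_{\bfx}$; the only difference is that in the former $g$ is allowed to range over all of $L\bfG$, whereas in the latter it is constrained to lie in $L^+\caG_{\bfx}$. My strategy is therefore to produce a pro-\'etale local factorization $g = \gamma h$ with $\gamma \in \bfG_b(k)$ and $h \in \dot X_{\bar c,b}^{\caG_{\bfx}}$, and then identify the fibres.

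To produce the factorization, let $R \in \Perf_{\obF}$ and $g \in \dot X_{b,b,\caO}(R)$, and put $u := g^{-1}\sigma_b(g) \in L^+({}^c\bfU \cap \bfU^-)_{\bfx}(R) \subseteq L^+\caG_{\bfx}(R)$. By the Remark following Proposition~\ref{representability_of_integral_DLVs}, the map $L^+\caG_{\bfx} \to L^+\caG_{\bfx}$, $h \mapsto h^{-1}\sigma_b(h)$, is pro-\'etale surjective. So, after pro-\'etale base-change on $R$, we find $h \in L^+\caG_{\bfx}(R)$ with $h^{-1}\sigma_b(h) = u$; by construction $h \in \dot X_{\bar c,b}^{\caG_{\bfx}}(R)$. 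Setting $\gamma := g h^{-1}$, using $\sigma_b(h) = h u$ one computes
\[
\gamma^{-1}\sigma_b(\gamma) = h \cdot u \cdot \sigma_b(h)^{-1} = h \cdot u \cdot u^{-1} h^{-1} = 1,
\]
so $\gamma \in L\bfG(R)^{\sigma_b} = \bfG_b(k)(R)$. Thus the multiplication map $\bfG_b(k) \times \dot X_{\bar c,b}^{\caG_{\bfx}} \to \dot X_{b,b,\caO}$, $(\gamma,h) \mapsto \gamma h$, is surjective as a map of arc-sheaves.

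For the fibres, suppose $\gamma_1 h_1 = \gamma_2 h_2$ with $\gamma_i \in \bfG_b(k)$ and $h_i \in \dot X_{\bar c,b}^{\caG_{\bfx}}$. Then $\gamma_2^{-1}\gamma_1 = h_2 h_1^{-1}$ lies in $\bfG_b(k) \cap L^+\caG_{\bfx} = \caG_{\bfx,b}(\caO_k)$, the last equality being the very definition of $\caG_{\bfx,b}$ as the $\caO_k$-descent of $\caG_{\bfx}$ along $\sigma_b$. Consequently the multiplication map descends to an isomorphism of arc-sheaves
\[
\bfG_b(k) \times^{\caG_{\bfx,b}(\caO_k)} \dot X_{\bar c,b}^{\caG_{\bfx}} \xrightarrow{\ \sim\ } \dot X_{b,b,\caO},
\]
where the contracted product is taken for the right action $(\gamma,h)\cdot\delta = (\gamma\delta,\delta^{-1}h)$. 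Since $\bfG_b(k)$ is regarded as a sheaf in the sense of \S\ref{sec:loc_prof_gps} and $\caG_{\bfx,b}(\caO_k)$ is an open subgroup of it with discrete coset space, the contracted product canonically equals the disjoint union $\coprod_{\gamma \in \bfG_b(k)/\caG_{\bfx,b}(\caO_k)} \gamma \dot X_{\bar c,b}^{\caG_{\bfx}}$. The $\bfG_b(k)$-equivariance is tautological (left translation on the $\gamma$-factor), and the right $\bfT_c(k)$-action is intertwined via $t \cdot (\gamma h) = \gamma(ht)$, the point being that even when $t \notin \caT_c(\caO_k)$ and $ht$ leaves $L^+\caG_{\bfx}$, one may rebalance through the contracted-product relation and land in a different $\gamma$-piece. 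No further obstacle arises: the entire content lies in Proposition~\ref{prop:Xbb_XbbO_equal}, whose proof is the technical heart of the argument.
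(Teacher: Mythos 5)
Your argument is correct and is essentially the paper's proof in unpacked form: the paper maps $\dot X_{b,b}\hookrightarrow L\bfG \to L\bfG/L^+\caG_{\bfx}$ and observes (via Proposition~\ref{prop:Xbb_XbbO_equal}) that this factors through $(L\bfG/L^+\caG_{\bfx})^{\sigma_b}=\bfG_b(k)/\caG_{\bfx,b}(\caO_k)$, and your explicit factorization $g=\gamma h$ via pro-\'etale Lang surjectivity on $L^+\caG_{\bfx}$, together with the fibre computation $\bfG_b(k)\cap L^+\caG_{\bfx}=\caG_{\bfx,b}(\caO_k)$, is precisely the proof of that identification. The only cosmetic difference is your contracted-product packaging versus the paper's direct map to the discrete coset space; both correctly place all the real content in Proposition~\ref{prop:Xbb_XbbO_equal}.
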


\begin{proof}
Consider the composed map
\begin{equation}\label{eq:Xbb_to_AFM}
\dot X_{b,b} \longhar L\bfG \longrar L\bfG/L^+\caG_{\bfx}, 
\end{equation}
where the second map is the natural projection. By Proposition~\ref{prop:Xbb_XbbO_equal}, for any $R \in \Perf_{\obF}$ and any $g \in \dot X_{b,b}(R)$, we have $\sigma_b(g) \in g L^+({}^c\bfU \cap \bfU)_{\bfx}(R) \subseteq gL^+\caG_{\bfx}(R)$. Now $L^+\caG_{\bfx} \subseteq L\bfG$ is a $\sigma_b$-stable subsheaf; thus $\sigma_b$ acts on $L\bfG/L^+\caG_{\bfx}$. It follows that \eqref{eq:Xbb_to_AFM} factors through $(L\bfG/L^+\caG_{\bfx})^{\sigma_b} = \bfG_b(k)/\caG_{\bfx,b}(\caO_k)$, which is a discrete set. The preimage of $1\cdot \caG_{\bfx}(\caO_k)$ is precisely $\dot X_{\bar c,b}^{\caG_{\bfx}}$.
\end{proof}

We now prove \eqref{eq:main_iso_1} for $\bfG$ as in Corollary~\ref{cor:disj_dec_on_U_level} (this suffices by Section~\ref{sec:reduction_of_main_1_to_adjoint}). We need to descend the disjoint union decomposition of Corollary~\ref{cor:disj_dec_on_U_level} to $X_{c,b}$. Recall that by \cite[Corollary~2.9]{Rydh_10}, a quasi-compact morphism of schemes is universally subtrusive if and only if it satisfies the condition in \cite[Definition~2.1]{BhattS_17}, \textit{i.e.}, is a $v$-cover. We need the following version of \cite[Theorem~4.1]{Rydh_10}.

\begin{lm}\label{lm:v_descend_of_clopens}
Let $S' \rar S$ be a quasi-compact universally subtrusive map between schemes. Let $E \subseteq S$ be a subset, $E' = f^{-1}(E)$. Then $E$ is closed $($resp.\ open$)$ if and only if $E'$ is closed $($resp.\ open$)$.
\end{lm}

\begin{proof}
It suffices to prove the claim for closed sets. The ``only if'' part is clear. For the other direction, it suffices by \cite[Corollary~1.5]{Rydh_10} to show that $E$ is closed in the constructible topology and stable under specializations if the same holds for $E'$. As $f$ is quasi-compact, $f$ is closed in the constructible topology; \textit{cf.} \cite[Proposition~1.2]{Rydh_10}. Hence $E=f(E')$ is closed in the constructible topology. Let $x_0 \rightsquigarrow x_1$ be a specialization relation in $S$ with $x_0 \in E$. As $f$ is universally subtrusive, $x_0 \rightsquigarrow x_1$ lifts to a specialization relation $y_0 \rightsquigarrow y_1$. Then $y_0 \in E'$ and as $E'$ is closed, also $y_1 \in E'$. Hence $x_1 = f(y_1) \in f(E')= E$. \qedhere
\end{proof}

The space $X_{c,b}$ is an ind-scheme by Proposition~\ref{prop:isom_dotXbw_Xdoubleprime} and \cite[Theorem~C]{Ivanov_DL_indrep}. Thus for any map $T \rar X_{c,b}$ with $T$ a quasi-compact scheme, the fiber product $T \times_{X_{c,b}} \dot X_{b,b}$ with the scheme $\dot X_{b,b}$ is again a scheme. Moreover, the map $T \times_{X_{c,b}} \dot X_{b,b} \rar T$ is quasi-compact and a $v$-cover by Corollary~\ref{cor:adjoint_Case_torsors}. Hence this map is universally subtrusive, \textit{cf.} \cite[Corollary~2.9]{Rydh_10}, and by Lemma~\ref{lm:v_descend_of_clopens} and Corollary~\ref{cor:disj_dec_on_U_level}, $T$ admits a disjoint union decomposition $T = \coprod_{\gamma \in G_b(k)/\caG_{\bfx,b}(\caO_k)} T_\gamma$ which is functorial in $T$. Doing this for all maps $T \rar X_{c,b}$ from a quasi-compact scheme $T$ into $X_{c,b}$, we deduce a disjoint union decomposition of $X_{c,b}$ indexed by the same set. The subscheme (ind-scheme) corresponding to $\gamma = 1$ is the quotient of $\dot X_{\bar c,b}^{\caG_{\bfx}}$ by $\bfT_c(k)$, which equals $X_{c,b}^{\caG_{\bfx}}$ by definition. This proves \eqref{eq:main_iso_1}.

\subsection{Proof of \eqref{eq:main_iso_2}}\label{sec:proof_of_main_iso_2}
Using Proposition~\ref{prop:isom_dotXbw_Xdoubleprime} we may replace the left-hand side in \eqref{eq:main_iso_2} by $\dot X_{b,b}$ (as $b$ lies over $\bar c$). If $\bfG$ is of adjoint type, \eqref{eq:main_iso_2} follows from Corollary~\ref{cor:disj_dec_on_U_level} and the discussion in Section~\ref{sec:reduction_of_main_1_to_adjoint}. We now proceed in two steps. 

\subsubsection{Special case: The center $\bfZ$ of $\bfG$ is an unramified induced torus}\label{sec:main_iso_2_part_case}

\begin{lm}\label{lm:proof_of_main_iso_2_fibers_of_index_set}
The natural map $p \colon \bfG_b(k)/\caG_{\bfx,b}(\caO_k) \rar \bfG_b^{\ad}(k)/\caG_{\bfx,b}^{\ad}(\caO_k)$ is surjective, and its fibers are $X_\ast(\bfZ)^{\langle \sigma \rangle}$-torsors.
\end{lm}

\begin{proof}
As $\bfZ$ is an induced torus, $H^1(\sigma,\bfZ) = 0$. Hence $\bfZ(k) \har \bfG_b(k) \tar \bfG^{\ad}_b(k)$ is exact. Let $\caZ$ denote the reductive $\caO_k$-model of $\bfZ$. Using $H^1(\sigma, \caZ(\caO_{\breve k})) = 0$ (by the pro-version of Lang's theorem), we deduce the exact sequence $\caZ(\caO_k) \har \caG_{\bfx,b}(\caO_k) \tar \caG_{\bfx,b}^{\ad}(\caO_k)$. Factoring this sequence out from the first one, and using $\bfZ(k)/\caZ(\caO_k) \cong X_{\ast}(\bfZ)^{\langle \sigma \rangle}$ (which again follows from $H^1(\sigma, \caZ(\caO_{\breve k})) = 0$), we obtain the lemma.
\end{proof}

Let $\pi \colon L\bfG \rar L\bfG^{\ad}$ be the natural map. It is surjective as $\pi_1(\bfG) \rar \pi_1(\bfG^{\ad})$ is. For $g \in \bfG$, write $\bar g$ for $\pi(g)$. For any $\tau \in X_\ast(\bfZ)$, choose a lift $\tilde \tau \in \bfZ(\breve k)$ subject to the condition that $\tilde\tau \in \bfZ(k)$ if $z \in X_\ast(\bfZ)^{\langle \sigma \rangle}$. We have $\pi^{-1}(L^+\caG_{\bfx}^{\ad}) = \coprod_{\tau \in X_\ast(\bfZ)} \tilde \tau L^+\caG_\bfx$ (indeed, $L^+\caG_{\bfx} \subseteq \pi^{-1}(L^+\caG_{\bfx}^{\ad})$, and the quotient is the discrete scheme $X_\ast(\bfZ)$). Consequently, if $\bar \gamma \in \bfG_b^{\ad}(k)/\caG_{\bfx,b}^{\ad}(\caO_k)$ with a fixed lift $\gamma \in \bfG_b(k)/\caG_{\bfx,b}(\caO_k)$, then
\begin{equation}\label{eq:disjoint_union_preimage_from_adjoint}
\pi^{-1}(\bar\gamma \dot X_{b,b}^{\caG_{\bfx}^{\ad}}) = \coprod_{\tau \in X_\ast(\bfZ)} \gamma \tilde \tau Y, \quad \text{ where } Y \eqdef \left\{g \in L^+\caG_{\bfx} \colon \bar g^{-1}\sigma_b(\bar g) \in L^+\left({}^c\bfU^{\ad} \cap \bfU^{\ad -}\right)_{\bfx} \right\}. 
\end{equation}

\begin{lm}\label{lm:emptyness_in_proof_of_main_2_for_conn_center}
If $\tau \in X_\ast(\bfZ) \sm X_\ast(\bfZ)^{\langle \sigma \rangle}$, then $\dot X_{b,b} \cap \gamma \tilde \tau Y = \varnothing$.
\end{lm}

\begin{proof} 
It suffices to prove that $(\dot X_{b,b} \cap \gamma \tilde \tau Y)(\ff) = \varnothing$ for any algebraically closed field $\ff \in \Perf_{\obF}$. Let $g \in L^+\caG_{\bfx}(\ff)$ be such that $\gamma \tilde \tau g \in (\dot X_{b,b} \cap \gamma \tilde \tau Y)(\ff)$. Then $g^{-1}\sigma_b(g) \in \pi^{-1}(L^+({}^c\bfU^{\ad} \cap \bfU^{\ad -})_{\bfx})(\ff) =$\linebreak  $(L^+\caZ)(\ff) \cdot L^+({}^c\bfU \cap \bfU^-)_{\bfx}(\ff)$, where $\caZ$ is the reductive $\caO_k$-model of $\bfZ$. Write $g^{-1}\sigma_b(g) = \zeta u$ with $\zeta \in (L^+\caZ)(\ff)$ and $u \in L^+({}^c\bfU \cap \bfU^-)_{\bfx}(\ff)$. Then
\[
(\gamma \tilde \tau g)^{-1} \sigma_b(\gamma\tilde\tau g) = \tilde \tau^{-1}\sigma(\tilde\tau) g^{-1}\sigma_b(g) = \tilde \tau^{-1}\sigma(\tilde\tau) \zeta u,
\]
where we have $\sigma_b(\tilde\tau) = \sigma(\tilde \tau)$ as $\tilde \tau$ is central. This cannot lie in $L^+({}^c\bfU \cap \bfU^-)(\ff)$ as the image of $\zeta\in L^+\caZ(\ff) \subseteq L\bfZ(\ff)$ in $X_\ast(\bfZ)$ is $0$ but the image of $\tilde \tau^{-1}\sigma(\tilde\tau)$ in $X_\ast(\bfZ)$ is non-zero by assumption.
\end{proof}

As $\dot X_{b,b} \subseteq \pi^{-1}(\dot X_{b,b}^{\ad})$, the disjoint decomposition for $\dot X_{b,b}^{\ad}$ (\textit{cf.} beginning of Section~\ref{sec:proof_of_main_iso_2}) implies, together with \eqref{eq:disjoint_union_preimage_from_adjoint} and Lemmas~\ref{lm:proof_of_main_iso_2_fibers_of_index_set} and~\ref{lm:emptyness_in_proof_of_main_2_for_conn_center}, that $\dot X_{b,b}$ is the disjoint union of all $\gamma \dot X_{b,b}^{\caG_{\bfx}}$, where $\gamma$ varies through $\bfG_b(k)/\caG_{\bfx,b}(\caO_k)$, which is precisely \eqref{eq:main_iso_2}.

\subsubsection{General case}\label{sec:proof_main_cover_general_case} An arbitrary $\bfG$ admits a derived embedding $\bfG \har \widetilde\bfG$ such that $\widetilde\bfG$ is unramified reductive and the center of $\widetilde\bfG$ is an unramified induced torus. (This was pointed out to us by M.~Borovoi; \textit{cf.} \cite{Borovoi_Mathoverflow_21}.) Now the result follows from Section~\ref{sec:main_iso_2_part_case} and the next proposition.

\begin{prop}\label{prop:reduction_to_conn_center}
Let\, $\bfG \har \widetilde \bfG$ be a derived embedding. If \eqref{eq:main_iso_2} holds for\, $\widetilde \bfG$, then it holds for\, $\bfG$. 
\end{prop}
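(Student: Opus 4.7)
The plan is to view $b$ as an element $\tilde b$ of $\widetilde\bfG(\breve k)$ via the derived embedding, apply \eqref{eq:main_iso_2} to $\widetilde\bfG$, and then intersect with $L\bfG \subseteq L\widetilde\bfG$. Since $\bfG^{\rm der} = \widetilde\bfG^{\rm der}$, the two groups share the unipotent radicals $\bfU = \widetilde\bfU$, $\bfU^- = \widetilde\bfU^-$, the same Weyl group with the same $\sigma$-action, and the same adjoint building; in particular $\tilde b$ is basic in $\widetilde\bfG$, lies over the same Coxeter element $c$, and $b\sigma = \tilde b\sigma$ fixes the same point $\bfx$. Fix the compatible parahoric model $\caG_\bfx := \widetilde\caG_\bfx \times_{\widetilde\bfG} \bfG$. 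Applying Proposition~\ref{prop:isom_dotXbw_Xdoubleprime}(i) to both groups replaces $\dot X_{\bar c}(b)$ and $\dot X_{\bar c}(\tilde b)$ by $\dot X_{b,b}^{\bfG}$ and $\dot X_{\tilde b,\tilde b}^{\widetilde\bfG}$, and unwinding Definition~\ref{def:alternative_spaces_Xwb} yields
\[ \dot X_{b,b}^{\bfG} = \dot X_{\tilde b,\tilde b}^{\widetilde\bfG} \cap L\bfG, \qquad \dot X_{\bar c,b}^{\caG_\bfx} = \dot X_{\bar c,\tilde b}^{\widetilde\caG_\bfx} \cap L^+\caG_\bfx, \]
the intersections being taken inside $L\widetilde\bfG$ and $L^+\widetilde\caG_\bfx$ respectively.

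Applying the hypothesis \eqref{eq:main_iso_2} to $(\widetilde\bfG, \tilde b, \bar c)$ and intersecting with $L\bfG$ gives
\[ \dot X_{b,b}^{\bfG} = \coprod_{\tilde\gamma \in \widetilde\bfG_{\tilde b}(k)/\widetilde\caG_{\bfx,\tilde b}(\caO_k)} \bigl( \tilde\gamma\, \dot X_{\bar c,\tilde b}^{\widetilde\caG_\bfx} \cap L\bfG \bigr). \]
To reindex the right-hand side by $\bfG_b(k)/\caG_{\bfx,b}(\caO_k)$, set $\bfS := \widetilde\bfG/\bfG = \widetilde\bfT/\bfT$, an unramified $k$-torus. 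Since $b \in \bfG$ acts trivially on $\bfS$ by conjugation, $\sigma_b$ restricts to $\sigma$ on $\bfS$, and the sequence $1 \to \bfG \to \widetilde\bfG \to \bfS \to 1$ is $\sigma_b$-equivariant. Combined with Lang's theorem for the connected parahoric $\caG_{\bfx,b}$ (which gives $H^1(\sigma, \caG_{\bfx,b}(\caO_{\breve k})) = 0$), this yields the commutative diagram with exact rows
\[ \begin{array}{ccccccc} 1 \to & \caG_{\bfx,b}(\caO_k) & \to & \widetilde\caG_{\bfx,\tilde b}(\caO_k) & \to & \caS(\caO_k) & \to 1 \\ & \cap & & \cap & & \cap & \\ 1 \to & \bfG_b(k) & \to & \widetilde\bfG_{\tilde b}(k) & \to & \bfS(k). & \end{array} \]
A short diagram chase then shows that $\bfG_b(k)/\caG_{\bfx,b}(\caO_k) \hookrightarrow \widetilde\bfG_{\tilde b}(k)/\widetilde\caG_{\bfx,\tilde b}(\caO_k)$ is injective, with image equal to the kernel of the projection to $\bfS(k)/\caS(\caO_k)$.

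It remains to match the geometry to this algebra. If $\tilde\gamma$ has a representative $\gamma \in \bfG_b(k)$, then $\gamma \in L\bfG$, and a direct check gives $\tilde\gamma\, \dot X_{\bar c,\tilde b}^{\widetilde\caG_\bfx} \cap L\bfG = \gamma\, \dot X_{\bar c,b}^{\caG_\bfx}$, using $\gamma^{-1} \in L\bfG$ together with the identity $L\bfG \cap L^+\widetilde\caG_\bfx = L^+\caG_\bfx$ in $L\widetilde\bfG$. Conversely, if an $\ff$-point $\tilde\gamma h \in \tilde\gamma\, \dot X_{\bar c,\tilde b}^{\widetilde\caG_\bfx} \cap L\bfG$ exists (with $h \in L^+\widetilde\caG_\bfx$), then projecting along $\widetilde\bfG \to \bfS$ forces the constant image $s \in \bfS(k)$ of $\tilde\gamma$ to coincide in $L\bfS(\ff)$ with the image of $h^{-1}$, which lies in $\caS(\bW(\ff))$; constancy plus integrality give $s \in \bfS(k) \cap \caS(\bW(\ff)) = \caS(\caO_k)$, so $\tilde\gamma$ lies in the image of $\bfG_b(k)/\caG_{\bfx,b}(\caO_k)$ by the diagram chase. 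Reindexing the coproduct accordingly yields \eqref{eq:main_iso_2} for $\bfG$. The whole argument is essentially bookkeeping; the only technical point that requires care is the compatibility of the integral models $\caG_\bfx \subseteq \widetilde\caG_\bfx$ so that the intersection identities above, as well as $({}^c\bfU \cap \bfU^-)_\bfx^{\bfG} = ({}^c\widetilde\bfU \cap \widetilde\bfU^-)_\bfx^{\widetilde\bfG} \cap \caG_\bfx$, hold --- this follows from the standard Bruhat--Tits construction relative to the common facet $\bfx$ in the shared adjoint building.
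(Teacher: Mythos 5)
Your overall strategy coincides with the paper's: apply \eqref{eq:main_iso_2} to $\widetilde\bfG$ and intersect the resulting decomposition with $L\bfG$. The difference lies in how you decide which cosets $\tilde\gamma\,\widetilde\caG_{\bfx,\tilde b}(\caO_k)$ actually meet $L\bfG$. The paper argues directly: from a geometric point of $\tilde\gamma\,\dot X_{\bar c,\tilde b}^{\widetilde\caG_{\bfx}}\cap L\bfG$ it manufactures, via Lang's theorem for $L^+\caG_{\bfx}$, a representative of $\tilde\gamma$ lying in $\bfG_b(k)$ (Lemma \ref{lm:gamma_rational_if_intersects}). You instead introduce the quotient torus $\mathbf{S}=\widetilde\bfG/\bfG$ and characterize the image of $\bfG_b(k)/\caG_{\bfx,b}(\caO_k)$ as the kernel of the map to $\mathbf{S}(k)/\mathcal{S}(\caO_k)$; this is a viable and arguably more conceptual alternative. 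Two points, however, need more care than you give them. First, the identity $\bfG(L)\cap\widetilde\caG_{\bfx}(\caO_L)=\caG_{\bfx}(\caO_L)$, which underlies your intersection formulas and the left-exactness of your top row, is not a formal consequence of ``the standard Bruhat--Tits construction'': the intersection of a parahoric with a subgroup is a priori only contained in the stabilizer of $\bfx$, and one needs the Kottwitz/Haines--Rapoport description of parahorics as ${\rm Fix}(\bfx)\cap\ker\tilde\kappa$ together with the injectivity of $\pi_1(\bfG)\to\pi_1(\widetilde\bfG)$ for a derived embedding. This is precisely the content of the paper's Lemma \ref{lm:intersection_of_parahorics} and is the one genuinely non-formal input; your schematic fiber product $\widetilde\caG_{\bfx}\times_{\widetilde\bfG}\bfG$ is not obviously the parahoric of $\bfG$ at $\bfx$ without this. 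Second, your diagram chase uses the surjectivity of $\widetilde\caG_{\bfx,\tilde b}(\caO_k)\to\mathcal{S}(\caO_k)$ to get ``kernel $\subseteq$ image''; Lang's theorem only descends a surjection from $\caO_{\breve k}$-points to $\caO_k$-points, and the surjectivity of $\widetilde\caG_{\bfx}(\caO_{\breve k})\to\mathcal{S}(\caO_{\breve k})$ itself requires an argument (again via the Kottwitz map). Neither issue is fatal, but both must be filled in; the paper's route through Lemma \ref{lm:gamma_rational_if_intersects} avoids the second one entirely. Finally, to pass from the statement on geometric points to an equality of subsheaves you implicitly need something like the paper's Lemma \ref{lm:sub_ind_schemes}; this is worth making explicit.
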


\begin{proof}
We identify $\bfG$ with a subgroup of $\widetilde \bfG$. Knowing $\dot X_{b,b}^{\widetilde \bfG} = \coprod_{\gamma \in \widetilde\bfG_b(k)/\widetilde\caG_{\bfx ,b}(\caO_k)} \gamma  {\dot X}_{\bar c,b}^{\widetilde \caG_{\bfx}}$ as subsheaves of $L\widetilde \bfG$, we have to show that $\dot X_{b,b} = \coprod_{\gamma \in \bfG_b(k)/\caG_{\bfx ,b}(\caO_k)} \gamma \dot X_{\bar c,b}^{\caG_{\bfx}}$. 

\begin{lm}\label{lm:intersection_of_parahorics}
Let $\ff \in \Perf_{\obF}$ be an algebraically closed field. Let $L:= \bW(\ff)[1/\varpi]$. We have $\bfG(L) \cap \widetilde\caG_{\bfx}(\caO_L) = \caG_{\bfx}(\caO_L)$. 
\end{lm}

\begin{proof}
By \cite[Proposition~3]{HainesR_08} we have $\caG_{\bfx}(\caO_L) = \Fix_{\bfG(L)}(\bfx) \cap \ker\tilde\kappa_{\bfG}$ and $\widetilde \caG_{\bfx}(\caO_L) = \Fix_{\widetilde \bfG(L)}(\bfx) \cap \ker\tilde\kappa_{\widetilde\bfG}$, where $\Fix(\bfx)$ denotes the stabilizer of the vertex $\bfx$ in the respective group (we identify the adjoint groups of $\bfG$ and $\widetilde\bfG$). As $\pi_1(\bfG) \rar \pi_1(\widetilde\bfG)$ is injective, \textit{cf.} \cite[Lemma 1.5]{Borovoi_98}, we have $\ker\tilde\kappa_{\widetilde \bfG} \cap \bfG(L) = \ker\tilde\kappa_{\bfG}$. As the actions on the adjoint building are compatible with the inclusion $\bfG \har \widetilde\bfG$, we have $\Fix_{\widetilde\bfG(L)}(\bfx) \cap \bfG(L) = \Fix_{\bfG(L)}(\bfx)$. Putting all these together, we deduce $\widetilde\caG_{\bfx}(\caO_L) \cap \bfG(L) = \caG_{\bfx}(\caO_L)$.
\end{proof}

Taking $\sigma_b$-invariants in Lemma~\ref{lm:intersection_of_parahorics}, we deduce that $\widetilde \caG_{\bfx ,b}(\caO_k) \cap \bfG_b(k) = \caG_{\bfx ,b}(\caO_k)$. In particular, the natural map $\bfG_b(k)/\caG_{\bfx ,b}(\caO_k) \rar \widetilde\bfG_b(k)/\widetilde\caG_{\bfx ,b}(\caO_k)$ is injective. Pick some $\bar\gamma \in \widetilde\bfG_b(k)/\widetilde\caG_{\bfx ,b}(\caO_k)$, represented by $\gamma \in \widetilde\bfG_b(k)$. 

\begin{lm}\label{lm:gamma_rational_if_intersects}
If\, $\gamma \dot X_{\bar c,b}^{\widetilde \caG_{\bfx}} \cap L\bfG \neq \varnothing$, then $\bar\gamma \in \bfG_b(k)/\caG_{\bfx,b}(\caO_k)$. 
\end{lm}

\begin{proof}
By assumption, the sheaf $\gamma \dot X_{\bar c,b}^{\widetilde \caG_{\bfx}} \cap L\bfG$ admits a geometric point. Let $\ff \in \Perf_{\obF}$ be algebraically closed, such that there exists some $x \in \dot X_{\bar c,b}^{\widetilde \caG_{\bfx}}(\ff)$ with $\gamma x \in L\bfG(\ff)$. Write $L = \bW(\ff)[1/\varpi]$. By the definition of $\dot X_{\bar c,b}^{\widetilde \caG_{\bfx}}$, we have 
\[
x^{-1}\sigma_b(x) \in ({}^c\bfU \cap \bfU^-)_{\bfx}(\caO_L) \subseteq \caG_{\bfx}(\caO_L)
\]
(here, we identify $\bfU \subseteq \bfG$ with the unipotent radical of a Borel of $\widetilde\bfG$). As $H^1(\sigma_b, L^+\caG_{\bfx}) = 0$,
we find some $\zeta \in \caG_{\bfx}(\caO_L)$ such that $x^{-1}\sigma_b(x) = \zeta^{-1}\sigma_b(\zeta)$. In particular, $x\zeta^{-1} \in \widetilde\caG_{\bfx}(\caO_L)^{\sigma_b} = \widetilde\caG_{\bfx,b}(\caO_k)$. From this and $\gamma \in \widetilde\bfG_b(k)$, it follows that $\gamma x \zeta^{-1} \in \widetilde\bfG_b(k)$. Moreover, $\gamma x \zeta^{-1} \in \bfG(L)$ as $\gamma x \in \bfG(L)$ and $\zeta \in \caG_{\bfx}(\caO_L) \subseteq \bfG(L)$. Combining these observations, we deduce $\gamma x \zeta^{-1} \in \bfG(L)^{\sigma_b} = \bfG_b(\caO_k)$. Finally, $x\zeta^{-1} \in \widetilde\caG_{\bfx}(\caO_L)$; thus $\gamma$ and $\gamma x\zeta^{-1}$ represent the same class in $\bfG_b(k)/\caG_{\bfx,b}(\caO_k)$ as $\bfG_b(k)/\caG_{\bfx ,b}(\caO_k) \rar \widetilde\bfG_b(k)/\widetilde\caG_{\bfx ,b}(\caO_k)$ is injective. The lemma is proved.
\end{proof}

Next we claim that 
\begin{equation}\label{eq:intersection_of_fund_part_Gtilde_G} 
\dot X_{\bar c,b}^{\widetilde \caG_{\bfx}} \cap L\bfG = \dot X_{\bar c,b}^{\caG_{\bfx}}.
\end{equation}

\begin{lm}\label{lm:sub_ind_schemes}
Let $X$ be an ind-(reduced scheme) over $\obF$ and let $Y,Z$ be two closed subfunctors, which are themselves ind-(reduced schemes). If $Y(\ff) = Z(\ff)$ for any algebraically closed field $\ff/\obF$, then $Y = Z$.
\end{lm}

\begin{proof}
Write $X = \dirlim_i X_i$ for qcqs reduced schemes $X_i$ and an index set $I$. Then $Y \cap X_i, Z \cap X_i$ are closed reduced subschemes of $X_i$, which agree on geometric points. Thus $Y \cap X_i = Z \cap X_i$ for each $i$, and hence also $Y = Z$ as the $X_n$ exhaust $X$.
\end{proof}

By combining Lemma \ref{lm:sub_ind_schemes} with Lemma \ref{lm:intersection_of_parahorics} we see that $L\bfG\cap L^+\widetilde\caG_{\bfx} = L^+\caG_{\bfx}$ (note that any ind-(perfect scheme) is automatically an ind-(reduced scheme)). As $\dot X_{\bar c,b}^{\widetilde \caG_{\bfx}} \subseteq L^+\widetilde\caG_{\bfx}$ by definition, we deduce $\dot X_{\bar c,b}^{\widetilde \caG_{\bfx}} \cap L\bfG = \dot X_{\bar c,b}^{\widetilde \caG_{\bfx}} \cap L^+\caG_{\bfx}$. Now, $\dot X_{\bar c,b}^{\widetilde \caG_{\bfx}} \cap L^+\caG_{\bfx} = \dot X_{\bar c,b}^{\caG_{\bfx}}$ is immediate, proving \eqref{eq:intersection_of_fund_part_Gtilde_G}. 

Finally, observe that $\dot X_{b,b} = \dot X_{b,b}^{\widetilde\bfG} \cap L\bfG$. Now we compute
\begin{align*}
\dot X_{b,b} &= \dot X_{b,b}^{\widetilde\bfG} \cap L\bfG 
= \coprod_{\gamma \in \widetilde\bfG_b(k)/\widetilde\caG_{\bfx,b}(\caO_k)} \left(\gamma \dot X_{\bar c,b}^{\widetilde\caG_{\bfx}} \cap L\bfG \right) = \coprod_{\gamma \in \bfG_b(k)/\caG_{\bfx,b}(\caO_k)} \left(\gamma \dot X_{\bar c,b}^{\widetilde\caG_{\bfx}} \cap L\bfG \right) \\
&= \coprod_{\gamma \in \bfG_b(k)/\caG_{\bfx,b}(\caO_k)} \gamma \left(\dot X_{\bar c,b}^{\widetilde\caG_{\bfx}} \cap L\bfG \right) 
= \coprod_{\gamma \in \bfG_b(k)/\caG_{\bfx,b}(\caO_k)} \gamma \dot X_{\bar c,b}^{\caG_{\bfx}},
\end{align*}
where the second equality is by the assumption of the proposition, the third is by Lemma~\ref{lm:gamma_rational_if_intersects} and the fifth holds by \eqref{eq:intersection_of_fund_part_Gtilde_G}. This proves the proposition.
\end{proof}

\subsection{Quasi-split case}\label{sec:quasisplit_proof} 
We work in the setup of Section~\ref{sec:quasisplit_statement}. In particular, we have the Coxeter element $c$, and we have $\bar c \in [1] \cap \overline F_c\,/\ker\bar\kappa_c$ (if $\bar c \in \overline F_c\,/\ker\bar\kappa_c \sm [1]$, then $\dot X_{\bar c}(1) = \varnothing$ by Corollary~\ref{cor:choice_b_cox_over_barw}). First, we justify that a choice of $\dot c$ as claimed in Section~\ref{sec:quasisplit_statement} is indeed possible. It is well known (in the quasi-split case) that $\fT(\bfG,c)\,/\Ad\bfG(k)$ is in natural bijection with $\bfG(k)$-orbits in the set of hyperspecial vertices of $\caB_{\bfG,k} = \caB_{\bfG,\breve k}^{\langle \sigma \rangle}$ (\textit{cf.} \cite{deBacker_06} or \cite[Lemma~2.6.2]{DudasI_20}). Let $[\bfT_{\bar c}] \in \fT(\bfG,c)\,/\Ad\bfG(k)$ be the class corresponding to $\bar c$ under the surjection $[1] \cap \overline F_c\,/\ker\bar\kappa_c \tar \fT(\bfG,c)\,/\Ad\bfG(k)$ from Proposition~\ref{prop:conj_classes_of_tori_basic_case}. Then  let$\bfx$ be any hyperspecial vertex in $\caB_{\bfG,k}$ corresponding to $[\bfT_{\bar c}]$ under the above bijection. As $\bfx$ is hyperspecial, $c$  admits a lift $\dot c \in F_c(\breve k)\cap \caG_\bfx(\caO_{\breve k})$. As $c$ is Coxeter, the affine transformation $\dot c\sigma$ of $\caA_{\bfT,\breve k}$ admits a unique fixed point, which necessarily equals $\bfx$. Comparing the two parametrizations of $\fT(\bfG,c)\,/\Ad\bfG(k)$ above, we see that the image of $\dot c$ equals the image of $\bar c$ in $([1] \cap \overline F_c\,/\ker\bar\kappa_c)\,/C_W(c\sigma)$. Replacing $\dot c$ with $\gamma \dot c$ for an appropriate $\gamma \in C_W(c\sigma)$, we have that $\dot c$ lifts~$\bar c$.

Now we prove Corollary~\ref{cor:main_decomposition_b1_case}. We only prove the first isomorphism (the second is completely analogous). We may assume that $\bfG$ is adjoint. By the pro-version  of Lang's theorem for $\caG_{\bfx}$, we may find an $h \in \caG_{\bfx}(\caO_{\breve k})$ such that $h^{-1}\dot c\sigma(h) = 1$. By \cite[Remark~8.7]{Ivanov_DL_indrep}, $g \mapsto hg \colon L\bfG \rar L\bfG$ induces an isomorphism $X_c(\dot c) \isor X_c(1)$ which is equivariant with respect to $\Ad(h) \colon \bfG(k) \isor \bfG_{\dot c}(k)$. Moreover, $\Ad(h)$ restricts to an isomorphism $\caG_{\bfx,1}(\caO_{\breve k}) \isor \caG_{\bfx,\dot c}(\caO_{\breve k})$, thus inducing $\bfG(k)/\caG_{\bfx,1}(\caO_{\breve k}) \isor \bfG_{\dot c}(k)/\caG_{\bfx,\dot c}(\caO_{\breve k})$. Exploiting Theorem~\ref{thm:main_decomposition} for $X_c(\dot c)$, we are reduced to showing that $g \mapsto hg \colon L^+\caG_{\bfx} \rar L^+\caG_{\bfx}$ induces an isomorphism $X_{c,\dot c}^{\caG_{\bfx}^{\ad}} \isor X_c^{\caG_{\bfx}}(1)$. We compute (quotients are taken in the category of arc-sheaves)
\begin{align*}
X_{c,\dot c}^{\caG_{\bfx}^{\ad}} &= \{g \in L^+\caG_{\bfx} \colon g^{-1}\sigma_{\dot c}(g) \in L^+({}^c\bfU \cap \bfU^-)_{\bfx} \}/\bfT_c(k) \\
&\cong \{g \in L^+\caG_{\bfx} \colon g^{-1}\sigma_{\dot c}(g) \in L^+({}^c\bfU \cap \bfU^-)_{\bfx} \cdot L^+\caT \}/L^+\caT \\
&\cong \{g \in L^+\caG_{\bfx} \colon g^{-1}\sigma(g) \in L^+({}^c\bfU \cap \bfU^-)_{\bfx} \cdot L^+\caT \cdot \dot c \}/L^+\caT \\
&\cong \{g \in L^+\caG_{\bfx} \colon g^{-1}\sigma(g) \in \dot c \cdot L^+ \bfB_{\bfx} \} / L^+({}^c\bfU \cap \bfU)_{\bfx} L^+\caT \\
&\cong \{ g \in L^+(\caG_{\bfx}/\bfB_{\bfx}) \colon g^{-1}\sigma(g) \in L^+\bfB_{\bfx} \cdot \dot c \cdot L^+\bfB_{\bfx} \}/L^+\bfB_{\bfx} \cong X_c^{\caG_{\bfx}}(1), 
\end{align*}
where the third isomorphism is $g \mapsto hg$ and in the fourth isomorphism, we exploit the last statement of Proposition~\ref{prop:loop_Steinbergs_twisted_crosssection}.

\section{Estimates via Newton polygons}\label{sec:type_by_type}

In this section we prove Proposition~\ref{prop:Xbb_XbbO_equal} for an absolutely almost simple classical group $\bfG^{\ad}$ of adjoint type (so, compared to Section~\ref{sec:deco}, we change our notation, and the group which was denoted by $\bfG$ there is $\bfG^{\ad}$ here). It will be more convenient to work with a certain central extension $p \colon \bfG \rar \bfG^{\ad}$, whose kernel $\bfZ = \ker(p)$ is an unramified torus. (Which central extension we take is specified in each particular case of our case-by-case study below; see Sections~\ref{sec:basic_setup_An1}, \ref{sec:basic_setup_C}, \ref{sec:basic_setup_odd_orthogonal}, \ref{sec:type_Dn_setup}, \ref{sec:type_2An1_setup}, \ref{sec:type_2Dm_setting}.) Then we have the exact sequence
\begin{equation}\label{eq:induced_zext_in_casebycase}
 X_\ast(\bfZ)_{\langle \sigma \rangle} \longrar \pi_1(\bfG)_{\langle \sigma \rangle} \longrar \pi_1(\bfG^{\ad})_{\langle \sigma \rangle} \longrar 0. 
\end{equation}

\begin{rem}\label{rem:trivial_kernel_exception}
In our case-by-case study below, the left map will be injective, except in type ${}^2A_{n-1}$ with $n$ even. (This is clear in the split cases $A_{n-1}, B_m,C_m,D_m$; see Sections~\ref{sec:type_2An1_roots_etc} and \ref{sec:type_2Dm_further_setting} for the cases ${}^2A_{n-1}$ ($n$ odd) and ${}^2D_m$.) In type ${}^2A_{n-1}$ with $n$ even, the left map will be the zero map $\bZ/2\bZ \rar \bZ/2\bZ$ (see Section~\ref{sec:type_2An1_roots_etc}). 
\end{rem}

We let $\bar b \in F_c^{\ad}(\breve k) \subseteq \bfG^{\ad}(\breve k)$ be a basic element lying over a Coxeter element $c \in W$, so that we have to show that $X_{\bar b,\bar b}^{\ad} = X_{\bar b,\bar b, \caO}^{\ad}$ within $L\bfG^{\ad}$.
It suffices to show that the Lang map $g \mapsto g^{-1}\sigma_{\bar b}(g) \colon X_{\bar b,\bar b}^{\ad} \rar L({}^c\bfU \cap \bfU)$ factors through the closed subscheme $L^+({}^c\bfU \cap \bfU)_{\bfx}$. As all rings in $\Perf_{\obF}$ are reduced, 
it suffices to do so on geometric points. Let $\ff$ denote a fixed algebraically closed field in $\Perf_{\obF}$ and put $L := \bW(\ff)[1/\varpi]$. 

As $\pi_1(\bfG)_{\langle \sigma \rangle} \rar \pi_1(\bfG^{\ad})_{\langle \sigma \rangle}$ is surjective, $\bar b$ lifts to a basic element $b \in F_c(\breve k) \subseteq \bfG(\breve k)$. For any element $\bar\zeta \in \ker\left(X_{\ast}(\bfZ)_{\langle \sigma \rangle} \rar \pi_1(\bfG)_{\langle \sigma \rangle}\right)$, fix a representative $\zeta \in X_{\ast}(\bfZ)$ of $\bar \zeta$. For each such $\zeta$, we have the commutative diagram
\begin{equation}\label{eq:lifting_Xbb_Xbb_ad}
\xymatrix{ 
\dot X_{b\varpi^\zeta,b} \ar[r]^{q_{b,\zeta}} & \dot X_{\bar b,\bar b}^{\ad} \\
\dot X_{b\varpi^\zeta,b,\caO} \ar[r] \ar@{^(->}[u] & \dot X_{\bar b,\bar b,\caO}^{\ad}\rlap{,} \ar@{^(->}[u]
}
\end{equation}
where the objects on the left (resp.\ right) are contained in $L\bfG$ (resp.\ $L\bfG^{\ad}$), and where 
\[ 
\dot X_{b\varpi^\zeta,b,\caO} \eqdef \text{preimage of $L^+({}^c\bfU \cap \bfU^-)_{\bfx}\varpi^\zeta$ under $g\mapsto g^{-1}\sigma_b(g) \colon L\bfG \rar L\bfG$}  
\]
is a subsheaf of $\dot X_{b\varpi^\zeta,b}$. 

\begin{lm}\label{lm:lift_to_zext} We have
\[
\bigcup_{\bar \zeta \in \ker\left(X_{\ast}(\bfZ)_{\langle \sigma \rangle} \rar \pi_1(\bfG)_{\langle \sigma \rangle}\right)} q_{b,\zeta}(\dot X_{b\varpi^\zeta,b}(\ff)) = \dot X_{\bar b, \bar b}(\ff). 
\]
\end{lm}

\begin{proof}
Let $\bar g \in \dot X_{\bar b,\bar b}^{\ad}(\ff)$. The map $L\bfG(\ff) \rar L\bfG^{\ad}(\ff)$ is surjective (as $H^1(L,\bfZ) = 0$). Thus we may lift $\bar g$ to some $g \in L\bfG(\ff)$. Then $g^{-1}\sigma_b(g) = uz$ with $u \in ({}^c\bfU \cap \bfU^-)(L)$ and $z \in \bfZ(L)$. On the other hand, it is clear that under $\kappa_{\bfG} \colon \bfG(L) \rar \pi_1(\bfG)_{\langle \sigma \rangle}$, the element $g^{-1}\sigma_b(g)$ maps to $0$. As also $\kappa_{\bfG}(({}^c\bfU \cap \bfU^-)(L)) = 0$, we deduce that $\kappa_{\bfG}(z) = 0$. Thus the image $\bar \zeta$ of $z$ in $X_\ast(\bfZ)_{\langle \sigma \rangle}$ lies in $\ker\left(X_{\ast}(\bfZ)_{\langle \sigma \rangle} \rar \pi_1(\bfG)_{\langle \sigma \rangle}\right)$. The exactness of \eqref{eq:torus_sequence_kottwitz_map} implies that there exists some $\tau \in \bfZ(L)$ such that $z = \tau^{-1}\sigma(\tau)\varpi^\zeta$. Replacing $g$ with $g\tau^{-1}$ and recalling that $\bfZ$ is central in $\bfG$, we get $g^{-1}\sigma_b(g) \in ({}^c\bfU \cap \bfU^-)(L)\varpi^\zeta$, \textit{i.e.}, $g \in \dot X_{b\varpi^\zeta,b}(\ff)$. 
\end{proof}

To show that the right arrow in \eqref{eq:lifting_Xbb_Xbb_ad} is surjective, it suffices (using Lemma~\ref{lm:lift_to_zext}) to show that the left vertical arrow is surjective on $\ff$-points for all $\bar{\zeta}$. Thus, finally, we are reduced to showing that the inclusion $\dot X_{b\varpi^\zeta,b,\caO}(\ff) \har \dot X_{b\varpi^\zeta,b}(\ff)$ is surjective, \textit{i.e.}, that 
\begin{equation}\label{eq:to_show_in_case_by_case}
\forall g \in \dot X_{b\varpi^\zeta,b}(\ff), \quad \text{ we have } \quad g^{-1}\sigma_b(g) \in L^+({}^c\bfU \cap \bfU^-)_{\bfx}(\ff)\varpi^\zeta; 
\end{equation}
we will do this in the following subsections case by case. As mentioned in Remark~\ref{rem:trivial_kernel_exception}, we have $\#\ker\left(X_{\ast}(\bfZ)_{\langle \sigma \rangle} \rar \pi_1(\bfG)_{\langle \sigma \rangle}\right) = 1$, and it will suffice to prove only the surjectivity of $\dot X_{b,b,\caO}(\ff) \har \dot X_{b,b}(\ff)$, with the only exception (treated in Section~\ref{sec:type_2An1_even_extra_comp}) in the case ${}^2A_{n-1}$ with $n$ even, when $\#\ker\left(X_{\ast}(\bfZ)_{\langle \sigma \rangle} \rar \pi_1(\bfG)_{\langle \sigma \rangle}\right) = 2$. We will use the following well-known property of isocrystals.

\begin{lm}\label{lm:isocrystal_lemma}
Let $(V,\varphi)$ be an isocrystal over $\ff$ which is isoclinic of slope $\lambda$ and dimension $n \geq 1$. Let $v$ be a \emph{cyclic vector} of $(V,\varphi)$, that is, an element $v \in V$ such that the set $\{\varphi^i(v)\}_{i=0}^{n-1}$ is a basis of $V$.
Write $\varphi^n(v) = \sum_{i=0}^{n-1}A_i\varphi^i(v)$ with $A_i \in L$. Then $\ord_\varpi(A_i) \geq (n-i)\lambda$.
\end{lm}

\begin{proof}
The Newton polygon of $(V,\varphi)$ is the straight line segment connecting the points $(0,0)$ and $(n,\lambda n)$ in the plane. It coincides with the Newton polygon of the polynomial $\sum_{i=0}^{n} A_i X^i \in L[X]$ with $A_n = 1$, which is by definition the convex hull of the points $(i,\ord_\varpi(A_{n-i}))$; \textit{cf.} \cite[Sections~3.1 and~3.2]{Beazley_12} or \cite[Proposition~5.7]{Kedlaya_04} . 
\end{proof}

Finally, note that if $g \in \dot X_{b,b}(\ff)$, then $g^{-1}\sigma_b(g) = y \in ({}^c\bfU \cap \bfU^-)(L)$, \textit{i.e.}, we have 
\begin{equation}\label{eq:coxeter_almost_cyclic_equation_GSp}
b\sigma(g) = gyb.
\end{equation}

We will now proceed case by case. Below we will denote the fixed point $\bfx$ of $b\sigma$ by $\bfx_b$ (to make the notation non-ambiguous). Also recall the notation from Section~\ref{sec:notation_and_setup}, especially Section~\ref{sec:notation_groups}.

\subsection{Type $\boldsymbol{A_{n-1}}$}\label{sec:type_A_case_by_case}
This case is treated in \cite{CI_ADLV,CI_loopGLn}. We include it for the sake of completeness.

\subsubsection{}\label{sec:basic_setup_An1} Let $n\geq 2$, and let $V_0$ be an $n$-dimensional $k$-vector space with an (ordered) basis $\{e_1, \dots, e_n \}$. Whenever convenient, identify $V_0$ with $k^n$ via this basis. Let $\bfG = {\bf GL}_n(V_0)$. 

\subsubsection{}\label{sec:type_A_further_setup} 
Let $\bfT \subseteq \bfB \subseteq \bfG$ be the diagonal torus and the upper triangular Borel subgroup. Then $X_\ast(\bfT) \cong \bigoplus_{i=1}^n\bZ\beta_i$, with $\beta_i \colon t \mapsto (1^{\oplus i-1},t,1^{\oplus n-i})$. Then $X_\ast(\bfT^{\sssc}) = \bZ\Phi^{\vee} = \bigoplus_{i=1}^{n-1}\bZ\cdot \left(\beta_{i+1}-\beta_i\right)$, $X_\ast(\bfZ) = \bZ \cdot \sum_{i=1}^n \beta_i$.
Then $\pi_1(\bfG^{\ad}) = X_\ast(\bfT)/\bZ\Phi^{\vee} + X_\ast(\bfZ) \cong \bZ/n\bZ$, the isomorphism given by sending $1$ to the class $\bar\beta_i$ of $\beta_i$ for any $i$.

We have $\Phi  = \{ \alpha_{i-j} \colon 1\leq i < j \leq n\}$, where $\alpha_{i-j} \colon (t_\lambda)_{\lambda=1}^n \mapsto t_it_j^{-1}$ and $\Delta = \{ \alpha_{i-(i+1)} \colon 1\leq i \leq n-1\}$. We have the Coxeter element
\begin{equation}\label{eq:special_Coxeter_element_type_A}
c = s_{\alpha_{1-2}}s_{\alpha_{2-3}} \dots s_{\alpha_{(n-1)-n}} = (1,2,\dots,n).
\end{equation}
For $0\leq \kappa < n$, consider the lift $\dot c_\kappa$ of $c$ to $\bfG(\breve k)$ given by $e_i \mapsto e_{i+1}$ for $1 \leq i < n$ and $e_n \mapsto \varpi^\kappa e_1$. Then $\dot c_\kappa$ maps to $\kappa\bar\beta_1 \in \pi_1(\bfG^{\ad})$,  and $\bfG_{\dot c_\kappa}$ run through all inner forms of $\bfG$.

We identify $\caA_{\bfT^{\ad}, \breve k}$ with $X_\ast(\bfT^{\ad})_{\bR}$ by sending $0 \in X_\ast(\bfT)_{\bR}$ to the $\sigma$-stable hyperspecial point corresponding to $\bigoplus_{i=1}^n \caO_{\breve k}e_i \subseteq V_0 \otimes_k \breve k$.

The group ${}^c\bfU \cap \bfU^-$ is generated by the root subgroups attached to $\alpha_{i-1}$ ($1<i\leq n$). An element $y \in ({}^c\bfU \cap \bfU^-)(R)$ is given by $e_1 \mapsto e_1 + \sum_{i=2}^n a_{i-1} e_i$; $e_i \mapsto e_i$ for $2\leq i\leq n$.

\subsubsection{}\label{sec:computation_case_An} Let $b = \dot c_\kappa$ with $0 \leq \kappa < n$. We compute $\bfx_b = \left(-\frac{(i-1)\kappa}{n}\right)_{i=1}^n$. Let $g \in \dot X_{b,b}(\ff)$. Then \eqref{eq:coxeter_almost_cyclic_equation_GSp} holds with $y$ as described in Section~\ref{sec:type_A_further_setup}. We have to show that $y \in \caG_{\bfx_b}(\caO_L)$, or equivalently (see \textit{e.g.}~\cite[Section~2.5]{MoyP_94})
\begin{equation}\label{eq:MP_counts_type_A}
\ord\nolimits_\varpi(a_i) \geq \langle \alpha_{(i+1) - 1}, {\bf x}_b \rangle = -\frac{\kappa i}{n} \qquad \forall 1\leq i\leq n-1.
\end{equation}

Consider the $\phi$-linear isocrystal $(V,\varphi) = (V_0 \otimes_k L, b\sigma)$ over $\ff$. It is isoclinic of slope $\frac{\kappa}{n}$. For $1\leq i\leq n$, let $g_i = g(e_i)$. Then $\sigma(g)(e_i) = \sigma(g_i)$ for all $i$. Thus, \eqref{eq:coxeter_almost_cyclic_equation_GSp} implies $\varphi(g_i) =gyb(e_i)$. Using this equation for all $i \neq n$ and writing $v := g_1$, we deduce $g_i = \varphi^{i-1}(v)$ for $1\leq i \leq n-1$. In particular, $\{\varphi^i(v)\}_{i=0}^{n-1}$ is an $L$-basis of~$V$. Using $\varphi(g_i) =gyb(e_i)$ for $i=n$ and inserting the values for $g_i$, we arrive at the equation
\[
\varphi^n(v) = \varpi^\kappa v + \sum_{i=1}^{n-1} \varpi^\kappa a_i \varphi^i(v).
\]
Now Lemma~\ref{lm:isocrystal_lemma} shows that $\ord_\varpi(a_i) \geq \frac{\kappa}{n}(n-i) - \kappa = -\frac{\kappa i}{n}$. This gives \eqref{eq:MP_counts_type_A}, and we are done.

\subsubsection{Proof of Lemma~\ref{lm:filtration_on_roots_Coxeter} for type $\boldsymbol{A_{n-1}}$}\label{sec:proof_of_filtration_lm_type_An1}

The action of $c$ on $\Phi^+$ is as follows: $c(\alpha_{i-j}) = \alpha_{(i+1)-(j+1)}$ for $1\leq i < j < n$; $c(\alpha_{i-n}) \in \Phi^-$ for $1\leq i < n$. For $1\leq i\leq r = n-1$, put $\Psi_i = \{\alpha_{i'-j'} \in \Phi^+ \colon j' \geq i+1 \}$.

\subsection{Type $\boldsymbol{C_m}$}\label{sec:type_C_case_by_case}

\subsubsection{}\label{sec:basic_setup_C} Let $m\geq 2$. Let $V_0$ be a $2m$-dimensional $k$-vector space with an (ordered) basis $\{e_1, \dots, e_m,$ $e_{-m}, \dots e_{-1} \}$. Whenever convenient, we will identify $V_0$ with $k^{2m}$ via this basis. Let $\langle \cdot, \cdot \rangle$ be the alternating bilinear form on $V_0$ determined by $\langle e_{\pm i}, e_{\pm j} \rangle = 0$ if $i\neq j$ and $\langle e_i, e_{-i} \rangle = 1$ for $1\leq i\leq m$. Let $\bfG$ be the closed $k$-subgroup of ${\bf GL}(V_0)$ of elements preserving $\langle \cdot , \cdot \rangle$ up to a scalar; \textit{i.e.}, 
\[
\bfG(R) = \{g \in {\bf GL}(V_0)(R) \colon \langle gu,gv \rangle = \lambda(g)\langle u,v\rangle \text{ for all $u,v \in V_0 \otimes_k R$} \}, 
\]
where $\lambda(g) \in R^\times$ is a unit only depending on $g$. The group $\bfG$ is the \emph{group of symplectic similitudes}, and the character $\lambda\colon \bfG \rar \bG_m$ is called the \emph{similitude character}.

\subsubsection{}\label{sec:type_C_further_settings} Let $\bfT \subseteq \bfB \subseteq \bfG$ be the diagonal torus and the Borel subgroup consisting of upper triangular matrices in $\bfG$. Then $\bfT = \{(t_0t_1,\dots,t_0t_m,t_m^{-1},\dots,t_1^{-1}) \colon t_i \in \bG_m \}$ and $X_\ast(\bfT) = \bigoplus_{i=0}^m \bZ \varepsilon_i$, where 
\[
\varepsilon_0 \colon t \longmapsto (t^{\oplus m}, 1^{\oplus m})\quad \text{and} \quad\varepsilon_i \colon t \longmapsto (1^{\oplus i-1}, t, 1^{\oplus 2(m-i-1)}, t^{-1},1^{\oplus i-1}) \text{ for $1\leq i\leq m$} 
\]
are cocharacters of $\bfT$. We have $X_\ast(\bfT^{\ad}) = X_\ast(\bfT)/\bZ \cdot (2\varepsilon_0 - \sum_{i=1}^m \varepsilon_i)$ and $X_\ast(\bfT^{\sssc}) = \bigoplus_{i=1}^m \bZ\varepsilon_i$. In particular, $\pi_1(\bfG^{\ad}) = X_\ast(\bfT^{\ad})/X_\ast(\bfT^{\sssc}) \cong \bZ/2\bZ$, generated by the class of $\varepsilon_0$.

We have $\Phi = \{ \alpha_{\pm i \pm j} \colon (t_i)_{i=1}^m \mapsto t_i^{\pm 1} t_j^{\pm 1} \colon 1\leq i<j\leq m\} \cup \{\alpha_{\pm 2\cdot i} \colon (t_i)_{i=1}^m \mapsto t_i^{\pm 2} \colon 1\leq i \leq m\}$ and $\Delta = \{\alpha_{i-(i+1)} \colon 1\leq i \leq m-1\} \cup \{ \alpha_{2\cdot m}\}$. We have the Coxeter element
\begin{equation}\label{eq:special_Coxeter_type_C}
c = s_{\alpha_{1-2}} \cdot \dots \cdot s_{\alpha_{(m-1)-m}} s_{\alpha_{2m}} = (1,2,\dots,m-1,m,2m,2m-1,\dots,m+1). 
\end{equation}
We consider two lifts $\dot c_0$, $\dot c_1$ of $c$ to $\bfG(\breve k)$. The first is given by $\dot c_0(e_{\pm i})=e_{\pm (i+1)}$ for $1\leq i \leq m-1$, $\dot c_0(e_{\pm m}) = e_{\mp 1}$. The second is $\dot c_1 = \dot c_0 \varepsilon_0(\varpi)$. Then $\dot c_0$, $\dot c_1$ are basic, and the image of $\dot c_0$, resp.\ $\dot c_1$, in $\bfG^{\ad}(\breve k)$ represents $0 \in \pi_1(\bfG^{\ad})$, resp.\ $\bar \varepsilon_0 \in \pi_1(\bfG^{\ad})$. Thus the pairs $(\bfG,\sigma_{\dot c_i})$ ($i=1,2$) represent the two inner forms of~$\bfG$.

We identify $\caA_{\bfT^{\ad},\breve k}$ with $X_\ast(\bfT^{\ad})_\bR$ by sending $0 \in X_\ast(\bfT)_\bR$ to the $\sigma$-stable hyperspecial point (``the origin'') corresponding to the self-dual lattice $\bigoplus_{i=1}^m \left(\caO_{\breve k} e_i \oplus \caO_{\breve k} e_{-i}\right) \subseteq V_0 \otimes_k \breve k$.

The group ${}^c\bfU \cap \bfU^-$ is generated by root subgroups attached to $\alpha_{i-1}$ ($2\leq i\leq m$) and $\alpha_{-2\cdot 1}$. Explicitly, any element $y \in ({}^c\bfU \cap \bfU^-)(R)$ is an $R$-linear map in ${\bf GL}(V_0)(R)$ given by $e_1 \mapsto e_1 + \sum_{i=2}^m a_{i-1} e_i + a_m e_{-m}$; $e_i \mapsto e_i$ for $2\leq i \leq m$ and $i = -1$; $e_{-i} \mapsto e_{-i} - a_{i-1} e_{-1}$ for all $2\leq i \leq m$.

\subsubsection{}\label{sec:type_C_some_general_stuff} Let $b = \dot c_\kappa$ with $\kappa \in \{0,1\}$. We compute that $\bfx_{b} = 0$ if $\kappa = 0$ and that $\bfx_b = \sum_{i=1}^m \lambda_i \varepsilon_i$ with $\lambda_i = -\frac{m}{4} + \frac{i-1}{2}$ for $1\leq i \leq m$ if $\kappa= 1$. Let $g \in \dot X_{b,b}(\ff)$. Then \eqref{eq:coxeter_almost_cyclic_equation_GSp} holds with $y$ as described in Section~\ref{sec:type_C_further_settings}, depending on some $a_1,\dots,a_m \in L$. 
We have to show that $y \in \caG_{\bfx_b}(\caO_L)$, which is (as in Section~\ref{sec:computation_case_An}) equivalent to 
\begin{equation}\label{eq:MP_counts_type_C_evaluated}
\ord\nolimits_\varpi(a_i) \geq \frac{\kappa i}{2} \text{\quad $\forall\, 1\leq i\leq m$}. 
\end{equation}

Consider the $\phi$-linear isocrystal $(V,\varphi) = (V_0 \otimes_k L, b\sigma)$ over $\ff$. It is isoclinic of slope $\frac{\kappa}{2}$. For $i \in \{\pm 1, \dots,\pm m\}$, let $g_i := g(e_i)$. Then $\sigma(g)(e_i) = \sigma(g_i)$ for all $i$. Hence \eqref{eq:coxeter_almost_cyclic_equation_GSp} implies $\varphi(g_i) = gyb(e_i)$. Using this equation for all $i \neq m+1$ and writing $v := g_1$, we deduce that 
$g_i = \varpi^{\kappa(1-i)} \varphi^{i-1}(v)$ for $2\leq i \leq m$, $g_{-1} = \varpi^{-\kappa m}\varphi^m(v)$ and 
\[ 
g_{-i} = \varpi^{-\kappa m}\varphi^{m+i-1}(v) + \sum_{j=1}^{i-1} \varpi^{-\kappa m}\phi^{i-1-j}(a_j)\varphi^{m+i+1 - j}(v) \quad \text{for $2\leq i\leq m$.}
\]
In particular, $\{\varphi^i(v)\}_{i=0}^{2m-1}$ is a basis of $V$; \textit{i.e.}, $v$ is a cyclic vector for $V$. Finally, applying $\varphi(g_i) = gyb(e_i)$ for $i=m+1$ and inserting the above formulas for the $g_i$, we deduce
\[
\varphi^{2m}(v) = \varpi^{\kappa m} v + \sum_{i=1}^m \varpi^{\kappa (m-i)}a_i\varphi^i(v) - \sum_{m+1}^{2m}\phi^{i-m}(a_{2m-i})\varphi^i(v).
\]
Now Lemma~\ref{lm:isocrystal_lemma} shows that $\ord_\varpi(a_i) \geq 0$ for all $1\leq i\leq m$ if $j=0$ and that $\ord_\varpi(\varpi^{m-i}a_i) \geq \frac{1}{2}(2m-i)$, \textit{i.e.}, that $\ord_\varpi(a_i) \geq \frac{i}{2}$ for all $1\leq i \leq m$, if $j=1$. This shows \eqref{eq:MP_counts_type_C_evaluated}, and we are done.

\subsubsection{Proof of Lemma~\ref{lm:conjugation_crosssecion_proof} for types $\boldsymbol{C_m}$ and $\boldsymbol{B_m}$}\label{sec:proof_of_crosssec_lemma_type_CB} The action of $c$ on $\Phi^+$ is as follows: for $1\leq i < j < m$, $c(\alpha_{i\pm j}) = \alpha_{(i+1)\pm (j+1)}$; for $1\leq i < m$, $c(\alpha_{i-m}) = \alpha_{1 + (i+1)}$; for $1\leq i \leq m-1$, $c(\alpha_{i+m}) \in \Phi^-$; for $1\leq i \leq m-1$, $c(\alpha_{2\cdot i}) = \alpha_{2\cdot (i+1)}$; $c(\alpha_{2\cdot m}) \in \Phi^-$.

We set $r = m+1$; $\Psi_{m+1} = \{\alpha_{i+m}\}_{i=1}^{m-1} \cup \{\alpha_{2\cdot m}\}$; $\Psi_m = \{\alpha_{i+j} \colon 1\leq i<j\leq m\} \cup \{\alpha_{2\cdot i}\}_{i=1}^m$; and for $1\leq i_0 \leq m-1$, $\Psi_{i_0} = \Psi_m \cup \{\alpha_{i-j} \colon 1 \leq i < j \leq m \text{ and $i_0 < j$} \}$. Replacing all roots with the corresponding coroots, we obtain by duality the desired partition for type $B_m$ (under the induced isomorphism of Weyl groups, the special Coxeter element for type $C_m$ maps to the one in Section~\ref{sec:type_B_further_settings}).

\subsection{Type $\boldsymbol{B_m}$}\label{sec:casebycase_odd_orthogonal}

\subsubsection{}\label{sec:basic_setup_odd_orthogonal} Let $m\geq 2$. Let $V_0$ be a $(2m+1)$-dimensional $k$-vector space with an (ordered) basis $\{e_1, \dots, e_m, e_{m+1}, e_{-m}, \dots e_{-1} \}$. Whenever convenient, identify $V_0$ with $k^{2m+1}$ via this basis. Let $Q \colon V_0 \rar k$ be the split quadratic form given by $Q\left(\sum_{i=1}^m (a_i e_i + a_{-i} e_i) + a_{m+1}e_{m+1}\right) = \sum_{i=1}^m a_i a_{-i} + a_{m+1}^2$. Let $\bfG = {\bf SO}(V_0,Q)$ be the split odd orthogonal group attached to $(V_0,Q)$; \textit{i.e.},
\[
\bfG(R) = \{g \in {\bf GL}(V_0)(R) \colon \langle gu,gv \rangle = \langle u,v\rangle \text{ for all $u,v \in V_0 \otimes_k R$, $\det(g) = 1$} \}. 
\]
It is of adjoint type. (The computations below work independently of the characteristic and the residue characteristic of $k$.)

\subsubsection{}\label{sec:type_B_further_settings} Let $\bfT \subseteq \bfB \subseteq \bfG$ be the diagonal torus and the Borel subgroup consisting of upper triangular matrices in $\bfG$. Then $\bfT = \{(t_1,\dots,t_m,1,t_m^{-1},\dots,t_1^{-1}) \colon t_i \in \bG_m \}$ and $X_\ast(\bfT) = \bigoplus_{i=1}^m \bZ \varepsilon_i$, where 
\[
\varepsilon_i \colon t \longmapsto (1^{\oplus i-1}, t, 1^{\oplus 2(m-i-1)}, t^{-1},1^{\oplus i-1}) \text{ for $1\leq i\leq m$} 
\]
are cocharacters of $\bfT$. We have $X_\ast(\bfT^{\sssc}) = \{\sum_{i=1}^m a_i \varepsilon_i \in X_\ast(\bfT) \colon \sum_i a_i \,\, \text{even}\}$. Thus $\pi_1(\bfG) = X_\ast(\bfT)/X_\ast(\bfT^{\sssc}) \cong \bZ/2\bZ$, generated by the class of $\varepsilon_i$ for any $i$.

We have $\Phi = \{ \alpha_{\pm i \pm j} \colon (t_\lambda)_{\lambda = 1}^m \mapsto t_i^{\pm 1} t_j^{\pm 1} \colon 1\leq i < j \leq m\}\cup \{\alpha_i \colon (t_\lambda)_{\lambda = 1}^m \mapsto t_i \colon 1\leq i\leq m\}$ and  $\Delta = \{\alpha_{i-(i+1)} \colon 1\leq i \leq m-1\} \cup \{\alpha_m\}$. We have the Coxeter element
\begin{equation}\label{eq:special_Coxeter_type_B}
c = s_{\alpha_{1-2}} \cdot \dots \cdot s_{\alpha_{(m-1)-m}} s_{\alpha_{m}} = (1,2,\dots,m,2m+1,2m,\dots,m+2).
\end{equation}
We consider two lifts $\dot c_0$, $\dot c_1$ of $c$ to $\bfG(\breve k)$. The first is given by $\dot c_0(e_{\pm i})=e_{\pm (i+1)}$ for $1\leq i \leq m-1$, $\dot c_0(e_{\pm m}) = e_{\mp 1}$ and $\dot c_0(e_{m+1}) = -e_{m+1}$. The second is given by $\dot c_1(e_{\pm i})=e_{\pm (i+1)}$ for $1\leq i \leq m-1$, $\dot c_1(e_{\pm m}) = \varpi^{\pm 1}e_{\mp 1}$ and $\dot c_1(e_{m+1}) = -e_{m+1}$. Then $\dot c_0$, $\dot c_1$ are basic, and the image of $\dot c_0$, resp.\ $\dot c_1$, in $\bfG^{\ad}(\breve k)$ represents $0 \in \pi_1(\bfG)$, resp.\ $\bar \varepsilon_1 \in \pi_1(\bfG)$. Thus the $\bfG_{\dot c_i}$ ($i=1,2$) are the two inner forms of $\bfG$.

We identify $\caA_{\bfT,\breve k}$ with $X_\ast(\bfT)_\bR$ by sending $0 \in X_\ast(\bfT)_\bR$ to the $\sigma$-stable hyperspecial point (``the origin'') corresponding to the self-dual lattice $\bigoplus_{i=1}^m \left(\caO_{\breve k} e_i \oplus \caO_{\breve k} e_{-i}\right) \oplus \caO_{\breve k} e_{m+1} \subseteq V_0 \otimes_k \breve k$.

The group ${}^c\bfU \cap \bfU^-$ is generated by root subgroups attached to $\alpha_{i-1}$ ($2\leq i\leq m$) and $\alpha_{-1}$. Explicitly, any element $y \in ({}^c\bfU \cap \bfU^-)(R)$ is an $R$-linear map in ${\bf GL}(V_0)(R)$ given by $e_1 \mapsto e_1 + \sum_{i=2}^m a_{i-1} e_i + a_m e_{m+1} - a_m^2 e_{-1}$; $e_i \mapsto e_i$ for $2\leq i \leq m$ and $i = -1$; $e_{m+1} \mapsto e_{m+1} - 2a_m e_{-1}$; $e_{-i} \mapsto e_{-i} - a_{i-1} e_{-1}$ for $2 \leq i \leq m$.

\subsubsection{} Let $b = \dot c_\kappa$ with $\kappa \in \{0,1\}$. We have $\bfx_b = 0$ if $\kappa = 0$ and $\bfx_b = -\frac{1}{2}\sum_{i=1}^m \varepsilon_i$ if $\kappa = 1$. Let $g \in \dot X_{b,b}(\ff)$. Then \eqref{eq:coxeter_almost_cyclic_equation_GSp} holds with $y$  as described in Section~\ref{sec:type_B_further_settings}, depending on some $a_1,\dots,a_m \in L$. We have to show that $y \in \caG_{\bfx_b}(\caO_L)$, which is (as in Section~\ref{sec:computation_case_An}) equivalent to 
\begin{equation}\label{eq:MP_counts_type_B_evaluated}
\ord\nolimits_\varpi(a_i) \geq 0 \text{\quad $\forall\, 1\leq i\leq m-1$} \quad \text{ and } \ord\nolimits_\varpi(a_m) \geq \frac{\kappa}{2}.
\end{equation}

Consider the $\phi$-linear isocrystal $(V,\varphi) = (V_0 \otimes_k L, b\sigma)$ over $\ff$. It is isoclinic of slope $0$. For $i \in \{\pm 1, \dots,\pm m, m+1\}$, let $g_i := g(e_i)$. Then \eqref{eq:coxeter_almost_cyclic_equation_GSp} implies that $\varphi(g_i) = gyb(e_i)$ for all $i$. 
Using this equation for all $i \neq m+1, m+2$ and writing $v := g_1$ and $u:= g_{m+1}$, we see 
that $g_i = \varphi^{i-1}(v)$ for $2\leq i \leq m$, $g_{-1} = \varpi^{-\kappa}\varphi^m(v)$ and 
\[ 
g_{-i} = \varpi^{-\kappa}\left(\varphi^{m+i-1}(v) + \sum_{j=1}^{i-1} \phi^{i-1-j}(a_j)\varphi^{m+i-1 - j}(v)\right) \quad \text{for $2\leq i\leq m$.}
\]
In particular, it follows that $\{\varphi^i(v)\}_{i=0}^{2m-1} \cup \{u\}$ are linearly independent over $L$. Now, evaluating the equation $\varphi(g_i) = gyb(e_i)$ for $i = m+1$ and $i = m+2$ gives
\begin{align}
\label{eq:type_B_aux_eq} 2a_m \varphi^m(v) &= u + \varphi(u), \\
\label{eq:type_B_min_pol_equation} \varphi^{2m}(v) &= v + \sum_{i=1}^{m-1} a_i\varphi^i(v) - a_m^2 \varpi^{-\kappa}\varphi^m(v) - \sum_{i=m+1}^{2m}\phi^{i-m}(a_{2m-i})\varphi^i(v) + a_m u.
\end{align}
First suppose $a_m = 0$. Then \eqref{eq:type_B_min_pol_equation} together with the linear independence of $\{\varphi^i(v)\}_{i=0}^{2m-1}$ shows that $v$ spans a $2m$-dimensional sub-isocrystal of $V$ and is a cyclic vector in it. Then Lemma~\ref{lm:isocrystal_lemma} and \eqref{eq:type_B_min_pol_equation} show that $\ord_\varpi(a_i) \geq 0$ for $1\leq i \leq m-1$.

Now suppose  $a_m \neq 0$. Then $a_m \in L^\times$, and $\lambda := \frac{a_m}{\phi(a_m)} \in L^\times$ satisfies $\ord_\varpi(\lambda) = 0$. Apply $\lambda\varphi(\cdot)$ to \eqref{eq:type_B_min_pol_equation}, and add the result to \eqref{eq:type_B_min_pol_equation}; then use \eqref{eq:type_B_aux_eq} to eliminate $u$:
\begin{align*}
\lambda \varphi^{2m+1}(v) &= v + (a_1 + \lambda) \varphi(v) + \sum_{i=2}^{m-1}(a_i + \lambda \phi(a_{i-1})) \varphi^i(v) + \ast \cdot \varphi^m(v) \\ &\quad + \left(\phi(a_{m-1}) - \lambda\phi(a_m)^2\varpi^{-\kappa}\right) \varphi^{m+1}(v) + \sum_{i=m+2}^{2m} \ast \cdot \varphi^i(v),
\end{align*}
where the $\ast$ denote some unspecified elements of $L$. Moreover, $v$ is a cyclic vector for $V$. Indeed, $\{\varphi^i(v)\}_{i=0}^{2m-1} \cup\{u\}$ is a basis of $V$, and hence it follows from \eqref{eq:type_B_min_pol_equation} and $a_m \neq 0$, that $\{\varphi^i(v)\}_{i=0}^{2m}$ also is. Using the cyclicity of $v$, the last equation, the fact that $\ord_\varpi(\lambda) = 0$ and Lemma~\ref{lm:isocrystal_lemma}, we deduce by induction on $i$ that $\ord_\varpi(a_i) \geq 0$ for $i = 1,\dots,m$ and that, moreover, $\ord_\varpi(a_m) \geq \frac{1}{2}$ if $\kappa=1$. This agrees with what we had to show in \eqref{eq:MP_counts_type_B_evaluated}.

\subsection{Type $\boldsymbol{D_m}$}\label{sec:casebycase_even_orthogonal}

\subsubsection{}\label{sec:type_Dn_setup} Let $m\geq 4$. Let $V_0$ be a $2m$-dimensional $k$-vector space with an (ordered) basis $\{e_1, \dots, e_m,$ $e_{-m}, \dots e_{-1} \}$. Whenever convenient, we use this basis to identify $V_0$ with $k^{2m}$. 
Consider the quadratic form $Q\left(\sum_{i=1}^m (a_i e_i + a_{-i}e_{-i})\right) = \sum_{i=1}^m a_ia_{-i}$ on $V_0$. We have the orthogonal group $\bfO(V_0,Q) \subseteq {\bf GL}(V_0)$. Via the Clifford algebra attached to $Q$, we have the surjective Dickson morphism $D_Q \colon \bfO(V_0,Q) \rar \bZ/2\bZ$; \textit{cf.} \cite[Proposition~C.2.2]{Conrad_14}.
Its kernel is the special orthogonal group ${\bf SO}(V_0, Q)$ of $Q$. Finally, let $\bfG = {\bf GSO}(V_0,Q)$ be the closed subgroup of ${\bf GL}(V_0)$ of elements which up to a scalar lie in ${\bf SO}(V_0,Q)$.\footnote{If $\charac k\neq 2$, then the Dickson morphism is not necessary to describe $\bfG$. For any $k$-algebra $R$, we have
\[
\bfG(R) = \left\{g \in {\bf GL}(V_0)(R) \colon Q(gv) = \lambda(g)Q(v) \text{ for all $u,v \in V_0 \otimes_k R$\, and \,$\frac{\det(g)}{\lambda(g)^m} = 1$} \right\}, 
\]
where $\lambda(g) \in R^\times$ is a unit only depending on $g$.}

\subsubsection{}\label{sec:type_D_further_setting} Let $\bfT \subseteq \bfB \subseteq \bfG$ be the diagonal torus and the Borel subgroup consisting of upper triangular matrices in $\bfG$. Then $\bfT = \{(t_0t_1,\dots,t_0t_m,t_m^{-1},\dots,t_1^{-1}) \colon t_i \in \bG_m \}$ and $X_\ast(\bfT) = \bigoplus_{i=0}^m \bZ \varepsilon_i$, where 
\[
\varepsilon_0 \colon t \longmapsto (t^{\oplus m}, 1^{\oplus m}) \quad\text{and}\quad \varepsilon_i \colon t \longmapsto (1^{\oplus i-1}, t, 1^{\oplus 2(m-i)}, t^{-1},1^{\oplus i-1}) \text{ for $1\leq i\leq m$} 
\]
are cocharacters of $\bfT$. We have $X_\ast(\bfT^{\sssc}) = \bZ\Phi^\vee = \{\sum_{i=1}^m \lambda_i \varepsilon_i \colon \sum_i\lambda_i \text{\, even}\}$, $X_\ast(\bfT^{\der}) = \bigoplus_{i=1}^m \bZ\varepsilon_i$ and $X_\ast(\bfZ) = \bZ(2\varepsilon_0 - \sum_{i=1}^m\varepsilon_i)$. We have $\pi_1(\bfG^{\ad}) = X_\ast(\bfT)/\bZ\Phi^\vee + X_\ast(\bfZ)$. If $m$ is even, $\pi_1(\bfG^{\ad}) \cong \bZ/2\bZ \times \bZ/2\bZ$, generated by the classes of $\varepsilon_0$ and $\varepsilon_i$ for an arbitrary $1\leq i\leq m$. If $m$ is odd, $\pi_1(\bfG^{\ad}) \cong \bZ/4\bZ$, generated by the class of $\varepsilon_0$. In any case, $\pi_1(\bfG^{\ad})$ has the distinguished subgroup $\pi_1(\bfG^{\der}) \cong \bZ/2\bZ$, generated by the class of some (\textit{i.e.}, any) $\varepsilon_i$ ($1\leq i\leq m$).

We have $\Phi = \{ \alpha_{\pm i \pm j} \colon (t_\lambda)_{\lambda=1}^m \mapsto t_i^{\pm 1} t_j^{\pm 1} \colon 1\leq i<j \leq m\}$ and $\Delta = \{\alpha_{i-(i+1)} \colon 1\leq i \leq m-1\} \cup \{\alpha_{(m-1) + m}\}$. We have the Coxeter element
\begin{equation}\label{eq:special_Coxeter_type_D}
c = s_{\alpha_{1-2}} \cdot \dots \cdot s_{\alpha_{(m-1)-m}} s_{\alpha_{(m-1) + m}} = (1,2,\dots,m-2,m-1,2m,2m-1,\dots,m+1)(m,m+1).  
\end{equation}
We consider three lifts $\dot c_0$, $\dot c_1$, $\dot c_2$ of $c$ to $\bfG(\breve k)$. For $\kappa \in \{0,1\}$, $\dot c_\kappa$ is given by $\dot c_\kappa(e_{\pm i}) = e_{\pm(i+1)}$ for $1\leq i\leq m-2$, $\dot c_\kappa(e_{\pm (m-1)}) = e_{\pm 1}$, $\dot c_\kappa(e_{\pm m}) = \varpi^{\mp \kappa}e_{\mp m}$. The third is given by $\dot c_2 = \dot c_0 \varpi^{\varepsilon_0}$. Then $\tilde\kappa_\bfG(\dot c_0) = 0$, $\tilde\kappa_\bfG(\dot c_1)$ is the non-trivial class in $\pi_1(\bfG^{\der})$ and $\tilde\kappa_\bfG(\dot c_2) = \bar\varepsilon_0 \in \pi_1(\bfG) \sm \pi_1(\bfG^{\der})$. Thus the $\bfG_{\dot c_\kappa}$ ($\kappa\in \{0,1,2\}$) are three of the four inner forms of $\bfG$. The fourth inner form is conjugate to $(\bfG,\dot c_2)$ by the outer automorphism of $\bfG$ induced by an inner automorphism of the orthogonal group ${\bf GO}(V_0)$; hence we need not to consider it separately.

We identify $\caA_{\bfT^{\ad},\breve k}$ with $X_\ast(\bfT^{\ad})_\bR$ by sending $0 \in X_\ast(\bfT)_\bR$ to the $\sigma$-stable hyperspecial point (``the origin'') corresponding to the self-dual lattice $\bigoplus_{i=1}^m \left(\caO_{\breve k} e_i \oplus \caO_{\breve k} e_{-i}\right) \subseteq V_0 \otimes_k \breve k$.

The group ${}^c\bfU \cap \bfU^-$ is generated by root subgroups attached to $\alpha_{i-1}$ ($2\leq i\leq m$) and $\alpha_{- 1 - m}$. Explicitly, any element $n \in ({}^c\bfU \cap \bfU^-)(R)$ is an $R$-linear map in ${\bf GL}(V_0)(R)$ given by $e_1 \mapsto e_1 + \sum_{i=2}^m a_{i-1} e_i + a_m e_{-m} - a_{m-1}a_m e_{-1}$; $e_i \mapsto e_i$ for $2\leq i \leq m-1$ and $i = -1$; $e_m \mapsto e_m - a_me_{-1}$; $e_{-i} \mapsto e_{-i} - a_{i-1} e_{-1}$ for all $2\leq i \leq m$.

\subsubsection{} Let $b = \dot c_\kappa$ with $\kappa \in \{0,1,2\}$. We compute $\bfx_b = 0$ if $\kappa = 0$; $\bfx_b = -\frac{1}{2}\varepsilon_m$ if $\kappa = 1$; $\bfx_b = -\frac{1}{4}\varepsilon_m + \sum_{i=1}^{m-1} (-\frac{1}{4} + \frac{1}{2}(i-1))\varepsilon_i$ if $\kappa = 2$. Let $g \in \dot X_{b,b}(\ff)$. Then \eqref{eq:coxeter_almost_cyclic_equation_GSp} holds, with $n$ as described in Section~\ref{sec:type_D_further_setting}, depending on some $a_1,\dots,a_m \in L$. We have to show that $y \in \caG_{\bfx_b}(\caO_L)$, which (as in Section~\ref{sec:computation_case_An}) is equivalent to 
\begin{align}
\nonumber &\ord\nolimits_\varpi(a_i) \geq 0 \,\text{ for } 1\leq i\leq m & \text{if $\kappa=0$,}
\\
\label{eq:show_valuations_bigger_sth_type_D}&\ord\nolimits_\varpi(a_i) \geq 0 \,\text{ for } 1\leq i \leq m-1 \;\text{ and } \;\ord\nolimits_\varpi(a_m) \geq 1 & \text{if $\kappa=1$, }
\\
\nonumber &\ord\nolimits_\varpi(a_i) \geq \frac{i}{2} \,\text{ for } 1\leq i \leq m-2,\; \ord\nolimits_\varpi(a_{m-1}) \geq \frac{m-2}{4} 
\;\text{ and } \;\ord\nolimits_\varpi(a_m) \geq \frac{m}{4}& \text{if $\kappa=2$.}
\end{align}

Consider the $\phi$-linear isocrystal $(V,\varphi) = (V_0 \otimes_k L, b\sigma)$ over $\ff$. It is isoclinic of slope $0$ if $\kappa \in \{0,1\}$ and slope $\frac{1}{2}$ if $\kappa = 2$. For $i \in \{\pm 1, \dots,\pm m\}$, put $g_i := \varphi(e_i)$. Then \eqref{eq:coxeter_almost_cyclic_equation_GSp} implies that $\varphi(g_i) = gyb(e_i)$ for all $i$. Using this equation for all $i \neq m+1,m+2$ and  writing $v := g_1$ and $u:= g_m$, we compute that $g_i = \varpi^{A_i} \varphi^{i-1}(v)$ for $2\leq i \leq m$, $g_{-1} = \varpi^{A_{-1}}\varphi^m(v)$, $g_{-m} = \varpi^{A_{-m}}\varphi(u) + \varpi^{A'_{-m}} a_{m-1}\varphi^{m-1}(v)$ and 
\[ 
g_{-i} = \varpi^{A_{-i}} \left(\varphi^{m+i-2}(v) + \sum_{d=m-1}^{m-3+i} \phi^{d-m+1}(a_{m-d+i-2})\varphi^d(v)\right) \quad \text{for $2\leq i\leq m-1$,}
\]
where $A_i = A'_{-m} = 0$ for all $i \in \{\pm 1,\dots,\pm m\}$ if $\kappa=0$; $A_i = A'_{-m} = 0$ for all $i \neq -m$ and $A_{-m} = -1$ if $\kappa=1$; $A_i = 1-i$ for $2\leq i\leq m-1$, $A_{-1} = A'_{-m} = A_{-i} = 1-m$ for $2\leq i \leq m-1$ and $A_{-m} = -1$ if $\kappa=2$. 

\begin{lm}\label{lm:lin_indep_of_vi_u}
The vectors $\{\varphi^i(v)\}_{i=0}^{2m-3}$, $u, \varphi(u)$ form a basis of\, $V$.
\end{lm}

\begin{proof}
From the formulas for the $g_{\pm i}$ above, it follows that the vectors are linearly independent, hence form a basis by dimension reasons.
\end{proof}

Now, evaluating the equation $\varphi(g_i) = gyb(e_i)$ for $i = m+1$ and $i = m+2$ gives the formulas
\begin{align}
\label{eq:isocrystal_equation_v_1_type_D}
\varphi^{2m-2}(v) &= \varpi^{B_0} v + \sum_{i=1}^{m-2} \varpi^{B_i} a_i \varphi^i(v) - \sum_{i=m}^{2m-3} \phi^{i - m + 1}(a_{2m-2 - i}) \varphi^i(v) + \varpi^{B_0'} a_{m-1} u + \varpi^{B_1'} a_m \varphi(u), 
\\
\varphi^2(u) &= \varpi^{C_0} u - \varpi^{C_1} \phi(a_{m-1})\varphi^m(v) - \varpi^{C_2} a_m \varphi^{m-1}(v), 
\label{eq:isocrystal_equation_u_1_type_D}
\end{align}
where all $B_i,B_i',C_i$ are $0$ if $\kappa=0$; $B_1' = -1$, $C_1 = 1$ and all other $B_i,B_i',C_i$ are $0$ if $\kappa=1$; $B_i = m-1 - i$ for $0\leq i \leq m-2$, $B_0' = B_1' = m-2$, $C_0=1$ and $C_1 = C_2 = 2-m$ if $\kappa=2$. Put 
\[ 
\mu := \varpi^D a_m \phi(a_m) - a_{m-1}\phi(a_{m-1}) \in L,
\] 
where $D = 0$ if $\kappa=0$; $D = -2$ if $\kappa=1$; $D = 1$ if $\kappa = 2$. We have five cases, which we consider separately in the following subsections.

\subsubsection{Case $\boldsymbol{a_m \neq 0}$, $\boldsymbol{a_{m-1} \neq 0}$, $\boldsymbol{\mu \neq 0}$}\label{sec:generic_case_type_D} Suppose  $a_ma_{m-1}\mu \neq 0$. Apply $\frac{a_m}{\phi(a_{m-1})}\varphi(\cdot)$ to \eqref{eq:isocrystal_equation_v_1_type_D}, and subtract \eqref{eq:isocrystal_equation_v_1_type_D} from the resulting equation. This gives an expression of $\frac{a_m}{\phi(a_{m-1})} \varphi^{2m-1}(v)$ as a linear combination of $\{\varphi^i(v)\}_{i=0}^{2m-2}$, $\varphi^2(u)$ and $u$.  Substitute \eqref{eq:isocrystal_equation_u_1_type_D} into this expression to eliminate $\varphi^2(u)$.  The coefficient of $u$ in the resulting equation is $\mu \neq 0$, and it gives an expression of $u$ as a linear combination of $\{\varphi^i(v)\}_{i=0}^{2m-1}$:
\begin{align}
\nonumber\varpi^E u \ =\ &\varpi^{E_0} \frac{\phi(a_{m-1})}{\mu} v - \frac{\varpi^{E_1}a_m - \varpi^{E_1'}\phi(a_{m-1}) a_1}{\mu} \varphi(v) 
- \sum_{i=2}^{m-2} \frac{\varpi^{E_i} a_m \phi(a_{i-1}) - \varpi^{E_i'} \phi(a_{m-1}) a_i}{\mu} \varphi^i(v) \\
\label{eq:expr_of_u_through_v_type_D}&- \frac{\varpi^{E_{m-1}} a_m \phi(a_{m-2}) - \varpi^{E_{m-1}'} a_m^2 \phi(a_m)}{\mu} \varphi^{m-1}(v) \\ 
\nonumber &- \frac{\phi(a_{m-1})\phi(a_{m-2}) - \varpi^{E_m'} \phi(a_{m-1})a_m \phi(a_m)))}{\mu} \varphi^m(v) 
+ \sum_{i=m+1}^{2m-1} \ast \cdot \varphi^i(v) + \varpi^{E_{2m-1}} \frac{a_m}{\mu}\varphi^{2m-1}(v),
\end{align}
where the $\ast \in L$ denote unspecified coefficients; all $E,E_i,E'_i = 0$ if $\kappa=0$; $E = E_0 = E_i' = 0$ ($1\leq i \leq m-2$), $E_m' = E_{2m-1} = E_i = -1$ ($1\leq i \leq m-1$), $E_{m-1}' = -2$ if $\kappa=1$; $E = m-2$, $E_0 = m-1$, $E_i = m-i$ and $E_i' = m-1-i$ for $1\leq i \leq m-1$, $E_m' = E_{2m-1} = 0$ if $\kappa=2$. 

As $a_m \neq 0$, the coefficient of $\varphi^{2m-1}(v)$ in \eqref{eq:expr_of_u_through_v_type_D} is non-zero. Using this observation, as well as the version of it after applying $\varphi(\cdot)$, shows, together with Lemma~\ref{lm:lin_indep_of_vi_u}, that $v$ is a cyclic vector for $V$. Now insert \eqref{eq:expr_of_u_through_v_type_D} into \eqref{eq:isocrystal_equation_v_1_type_D} to eliminate $u$ there. This gives (after multiplication with an element in $L^\times$) an expression 
\[
\frac{\mu}{\phi(\mu)}\varphi^{2m}(v) = \sum_{i=0}^{2m-1}\beta_i \varphi^i(v)
\] 
with $\beta_i \in L$ satisfying $\ord_\varpi(\beta_i) \geq (2m-i) \cdot \lambda$, where $\lambda$ is the slope of $(V,\varphi)$, by Lemma~\ref{lm:isocrystal_lemma}. Let $\gamma := - \frac{a_{m-1}}{\phi(a_m)} + \frac{\phi^2(a_{m-1})\mu}{\phi(a_m)\phi(\mu)}$.

\begin{lm}\label{lm:special_valuation_comp}
We have $\ord_\varpi(\gamma) \geq 0$ if\, $\kappa = 0$, $\ord_\varpi(\gamma) \geq -1$ if\, $\kappa = 1$ and $\ord_\varpi(\gamma) \geq 1$ if\, $\kappa = 2$. 
\end{lm}

\begin{proof}
One checks that $\gamma = \varpi^D \cdot \frac{\phi^2(a_{m-1})a_m - \phi^2(a_m)a_{m-1}}{\phi(\mu)}$. First assume $\kappa=0$. If $\ord_\varpi(a_{m-1}) > \ord_\varpi(a_m)$, then $\ord_\varpi(\phi^2(a_{m-1})a_m - \phi^2(a_m)a_{m-1}) \geq \ord_\varpi(a_{m-1}) + \ord_\varpi(a_m) \geq 2\ord_\varpi(a_m) = \ord_\varpi(\phi(\mu))$, and we are done as $D = 0$ in this case. One concludes similarly in the case $\ord_\varpi(a_m) < \ord_\varpi(a_{m-1})$. Finally, suppose $\ord_\varpi(a_{m-1}) = \ord_\varpi(a_m) =: \alpha$. Write $x = \varpi^\alpha a_m$, $y = \varpi^\alpha a_{m-1}$, so that $x,y \in \caO_L^\times$. Write $A := x\phi(x) - y\phi(y), B := x\phi^2(y) - y\phi^2(x) \in \caO_L$. It suffices to show that $\ord_\varpi(B) \geq \ord_\varpi(A)$. We now compute in $\caO_L/\left(\varpi^{\ord_\varpi(A)}\right)$, where we surely have $A = 0$, \textit{i.e.}, $x\phi(x) = y\phi(y)$, or equivalently $\phi\left(\frac{x}{y}\right) = \left(\frac{x}{y}\right)^{-1}$. Exploiting this equation twice, we see $\phi^2\left(\frac{x}{y}\right) = \phi\bigl( \left(\frac{x}{y}\right)^{-1}\bigr) = \frac{x}{y}$,
which shows that $\phi^2(x)y = x\phi^2(y)$ in $\caO_L\,/\left(\varpi^{\ord_\varpi(A)}\right)$, \textit{i.e.} $B = 0$ in $\caO_L\,/\left(\varpi^{\ord_\varpi(A)}\right)$, \textit{i.e.}, $\ord_\varpi(B) \geq \ord_\varpi(A)$.

Suppose $\kappa=1$. Write $a_m = \varpi a_m'$. Then $\gamma = \varpi^{-1} \cdot \frac{\phi^2(a_{m-1})a_m' - \phi^2(a_m')a_{m-1}}{\phi(a_m'\phi(a_m') - a_{m-1}\phi(a_{m-1}))}$, where the fraction is just $\gamma$ from case $\kappa=0$ for $a_{m-1}$ and $a_m'$, and we are done by applying the above.

Finally, suppose $\kappa=2$. Then $\mu = \varpi a_m \phi(a_m) - a_{m-1} \phi(a_{m-1})$, and  the claim is immediately checked in both cases, $\ord_\varpi(a_m) < \ord_\varpi(a_{m-1})$ and the opposite.
\end{proof}

Now we proceed case by case.

\subsubsection*{\it Case~$b = \dot c_0$} The slope is $\lambda = 0$; \textit{i.e.}, $\ord_\varpi(\beta_i) \geq 0$ for all $i$. One computes $\beta_1 = a_1 + \gamma$ and $\beta_2 = a_2 + \phi(a_1)\gamma - \frac{\mu}{\phi(\mu)}$. 
As $\ord_\varpi\left(\frac{\mu}{\phi(\mu)}\right) = 0$, Lemma~\ref{lm:special_valuation_comp} implies $\ord_\varpi(a_i) \geq 0$ for $i=1,2$. Using this as the first step, and computing $\beta_i = a_i - \frac{\mu\phi^2(a_{i-2})}{\phi(\mu)} + \phi(a_{i-1})\gamma$ for $3\leq i \leq m-2$, we conclude by induction (using Lemma~\ref{lm:special_valuation_comp}) that $\ord_\varpi(a_i) \geq 0$ for all $1\leq i \leq m-2$. Next, we have $\beta_{m-1} = a_{m-1}a_m - \frac{\mu\phi^2(a_{m-3})}{\phi(\mu)} + \phi(a_{m-2})\gamma$, which (using the above) implies that $\ord_\varpi(a_{m-1}a_m) \geq 0$. Thus, if $\ord_\varpi(a_{m-1}) = \ord_\varpi(a_m)$, we deduce that this number is at least $0$, and we are done according to \eqref{eq:show_valuations_bigger_sth_type_D}. Finally, suppose  $\ord_\varpi(a_{m-1}) \neq \ord_\varpi(a_m)$. We have $\beta_m = -\left(1+ \frac{\mu}{\phi(\mu)}\right)\phi(a_{m-2}) + a_{m-1}\phi(a_{m-1}) + \frac{\mu}{\phi(\mu)}\phi(a_m)\phi^2(a_m)$, which shows (as $\ord_\varpi(\beta_m) \geq 0$) that $\ord_\varpi\left(a_{m-1}\phi(a_{m-1}) + \frac{\mu}{\phi(\mu)}\phi(a_m)\phi^2(a_m)\right) \geq 0$. As the two summands have different valuations by assumption, both valuations must be non-negative, which implies $\ord_\varpi(a_{m-1}),\ord_\varpi(a_m) \geq 0$. We are done by \eqref{eq:show_valuations_bigger_sth_type_D}.

\subsubsection*{\it Case~$b = \dot c_1$} The slope is again $\lambda = 0$, so that $\beta_i \geq 0$ for all $i$. We have $\beta_1 = a_1 + \varpi \gamma$, $\beta_2 = a_2 - \frac{\mu}{\phi(\mu)} + \varpi\phi(a_1) \gamma$ and $\beta_i = a_i - \frac{\mu\phi^2(a_{i-2})}{\phi(\mu)} + \varpi\phi(a_{i-1})\gamma$ for $3\leq i \leq m-2$. As in the case $b = \dot c_0$, we deduce $\ord_\varpi(a_i) \geq 0$ for $1\leq i \leq m-2$. We have $\beta_{m-1} = a_{m-1} a_m + \varpi\phi(a_{m-2}) \gamma$. Using Lemma~\ref{lm:special_valuation_comp}, we deduce that if $\ord_\varpi(a_{m-1}) = \ord_\varpi(a_m) - 1$, then this number must be non-negative, and we are done according to \eqref{eq:show_valuations_bigger_sth_type_D}.  Assume $\ord_\varpi(a_{m-1}) \neq \ord_\varpi(a_m) - 1$. We compute $\beta_m = -\left(1 + \frac{\mu}{\phi(\mu)}\right)\phi(a_{m-2}) + \varpi a_{m-1}\phi(a_{m-1}) + \varpi^{-1} \frac{\mu}{\phi(\mu)} \phi(a_m)\phi^2(a_m)$, and it follows that $\ord_\varpi(a_{m-1}), \ord_\varpi(a_m) - 1 \geq 0$.

\subsubsection*{\it Case~$b = \dot c_2$} The slope is $\lambda = \frac{1}{2}$, so that for any $i$, $\ord_\varpi(\beta_i) \geq m- \frac{i}{2}$, or equivalently $\ord_\varpi(\beta_i)\geq m-\lfloor \frac{i}{2} \rfloor$. We have $\beta_1 = \varpi^{m-1} (a_1 + \gamma)$, $\beta_2 = \varpi^{m-2}(a_2 + \gamma \phi(a_1) - \varpi \frac{\mu}{\phi(\mu)})$, $\beta_i = \varpi^{m-i}(a_i + \gamma\phi(a_{i-1}) - \varpi \frac{\mu}{\phi(\mu)} \phi^2(a_{i-2}))$ for $3\leq i\leq m-2$, $\beta_{m-1} = a_{m-1}a_m + \varpi \gamma \phi(a_{m-2}) - \varpi^2 \frac{\mu}{\phi(\mu)} \phi^2(a_{m-3})$ and $\beta_m = -\varpi(1+\frac{\mu}{\phi(\mu)})\phi(a_{m-2}) + a_{m-1}\phi(a_{m-1}) + \frac{\mu}{\phi(\mu)}\phi(a_m) \phi^2(a_m)$. As in the previous two cases, we deduce that \eqref{eq:show_valuations_bigger_sth_type_D} holds.


In the following cases, we do not give all details as the computations are similar to (and easier than) those in Section~\ref{sec:generic_case_type_D}.

\subsubsection{Case $\boldsymbol{a_{m-1} \neq 0}$, $\boldsymbol{a_m = 0}$}\label{sec:case_am1neq0_am0} In this case \eqref{eq:isocrystal_equation_u_1_type_D} gives $\varphi^m(v) = \frac{1}{\phi(a_{m-1})} (u-\varphi^2(u))$. By Lemma~\ref{lm:lin_indep_of_vi_u} this implies that $u$ is a cyclic vector for $V$. Apply $\varphi^m(\cdot)$ to \eqref{eq:isocrystal_equation_v_1_type_D}, and insert the above expression of $\varphi^m(v)$ in terms of $u$ to eliminate $v$. This gives an expression $\varphi^{2m}(u) = \sum_{i=0}^{2m-1} \beta_i\varphi^i(u)$. Making all $\beta_i$ explicit for $1\leq i \leq m$ and using Lemma~\ref{lm:isocrystal_lemma}, one immediately deduces \eqref{eq:show_valuations_bigger_sth_type_D}.

\subsubsection{Case $\boldsymbol{a_{m-1} = 0}$, $\boldsymbol{a_m \neq 0}$} This case is completely analogous to the one in Section~\ref{sec:case_am1neq0_am0} (one has $\varphi^{m-1}(v) = \frac{1}{a_{m-1}} (u-\varphi^2(u))$, from which the cyclicity of $u$ and an explicit formula $\varphi^{2m}(u) = \sum_{i=0}^{2m-1}\beta_i\varphi^i(u)$ follow).

\subsubsection{Case $\boldsymbol{a_{m-1} = 0}$, $\boldsymbol{a_m = 0}$} By Lemma~\ref{lm:lin_indep_of_vi_u} $v$ is a cyclic vector for the $(2m-2)$-dimensional subisocrystal with $L$-basis $\{ \varphi^i(v) \}_{i=0}^{2m-3}$. The result follows easily from Lemma~\ref{lm:isocrystal_lemma} by looking at the coefficients of \eqref{eq:isocrystal_equation_v_1_type_D}. 

\subsubsection{Case $\boldsymbol{a_{m-1} \neq 0}$, $\boldsymbol{a_m \neq 0}$, $\boldsymbol{\mu = 0}$} (Note that $\kappa \neq 2$ in this case.) As $a_{m-1} \neq 0$, \eqref{eq:isocrystal_equation_v_1_type_D} along with Lemma~\ref{lm:lin_indep_of_vi_u} show that the vectors $\{\varphi^i(v)\}_{i=0}^{2m-2}$ are linearly independent. Further, following the same steps as in the first lines of Section~\ref{sec:generic_case_type_D} and exploiting $\mu = 0$, we arrive at an expression $\frac{a_m}{\phi(a_{m-1})} \varphi^{2m-1}(v) = \sum_{i=0}^{2m-2}\beta_i \varphi^i(v)$, thus showing that $v$ is a cyclic vector for the $(2m-1)$-dimensional subisocrystal generated by it. Proceeding as in the previous cases, one verifies \eqref{eq:show_valuations_bigger_sth_type_D}. This finishes the proof in type $D_m$.

\subsubsection{Proof of Lemma~\ref{lm:conjugation_crosssecion_proof} for type $\boldsymbol{D_m}$}\label{sec:proof_of_crosssec_lemma_type_D}
The action of $c$ on $\Phi^+$ is as follows: for $1\leq i< j<m-1$, $c(\alpha_{i\pm j}) = \alpha_{(i+1)\pm (j+1)}$; for $1\leq i\leq m-2$, $c(\alpha_{i-(m-1)}) = \alpha_{1+(i+1)}$; for $1 \leq i \leq m-2$, $c(\alpha_{i \pm m}) = \alpha_{(i+1) \mp m}$; for $1\leq i \leq m-2$, $c(\alpha_{i+(m-1)}) \in \Phi^-$, $c(\alpha_{(m-1)\pm m}) \in \Phi^-$.

Set $r = m+1$; $\Psi_{m+1} = \{\alpha_{i+(m-1)}\}_{i=1}^{m-2} \cup \{\alpha_{(m-1) - m}, \alpha_{(m-1)+m}\}$; $\Psi_m = \{\alpha_{i+j} \colon 1\leq i < j \leq m \} \cup \{\alpha_{(m-1) - m}, \alpha_{(m-1)+m}\}$; $\Psi_{m-1} = \Psi_m \cup \{\alpha_{i-m}\}_{i=1}^{m-2} \cup \{\alpha_{i+m}\}_{i=1}^{m-2}$; for $1 \leq i_0 \leq m-2$, $\Psi_{i_0} = \Psi_{m-1} \cup \{\alpha_{i-j} \colon 1\leq i<j\leq m-1 \text{ and } j> i_0\}$. 

\subsection{Type $\boldsymbol{{}^2A_{n-1}}$}\label{sec:type_twisted_An}

\subsubsection{}\label{sec:type_2An1_setup} Let $n \geq 2$. Let $m = \lfloor \frac{n}{2}\rfloor$, so that $n = 2m + 1$ if $n$ odd and $n=2m$ if $n$ is even. Let $V_0, e_i$ be as in Section~\ref{sec:basic_setup_An1}.
Let $\bfG$ be the unitary group attached to the standard hermitian form on $V_0 \otimes_k k_2$, relative to the unramified extension $k_2/k$ of degree $2$.
We may identify $\bfG \otimes_k k_2 = {\bf GL}_n(V_0 \otimes_k k_2)$, so that for any $\bF_q$-algebra $R_0$ with $\widetilde R = \bW(R_0 \otimes_{\bF_q} \obF)[1/\varpi]$, the Frobenius $\sigma$ on $\bfG(\widetilde R) = {\bf GL}_n(V_0)(\widetilde R)$ is given by
\[
g \longmapsto \sigma(g) = J \sigma_0(g)^{-1,T} J^{-1}, 
\]
where $\sigma_0$ is the Frobenius on ${\bf GL}(V_0)(\widetilde R)$ corresponding to the natural $k$-structure on ${\bf GL}(V_0)$, $(\cdot)^T$ is transposition and $J \in {\bf GL}_n(V_0)(k)$ is given by $e_i \mapsto e_{n+1-i}$ ($1\leq i\leq n$).

\subsubsection{}\label{sec:type_2An1_roots_etc} 
Let $\bfT,\bfB,\beta_i,\alpha_{i-j}$ be as in Section~\ref{sec:type_A_further_setup}. Note that $\bfT,\bfB$ are both $k$-rational (with respect to $\sigma$). Moreover, the action of $\sigma$ on $\pi_1(\bfG^{\ad})$ is by multiplication with $-1$. Thus, $\pi_1(\bfG^{\ad})_{\langle \sigma \rangle} = 1$ if $n$ is odd, and $\pi_1(\bfG^{\ad})_{\langle \sigma \rangle} \cong \bZ/2\bZ$, with the non-trivial element represented by $\bar\beta_i$ for any $i$, when $n$ is even. We have an isomorphism $\bZ\isor\pi_1(\bfG)$, $1 \mapsto \text{class of $\beta_1$}$, under which $X_\ast(\bfZ) \subseteq \pi_1(\bfG)$ corresponds to $n\bZ$. The action of $\sigma$ corresponds to multiplication by $-1$. Justifying Remark~\ref{rem:trivial_kernel_exception} in this case, we deduce from this that the induced map $X_\ast(\bfZ)_{\langle \sigma\rangle} \cong \bZ/2\bZ \rar \bZ/2\bZ \cong \pi_1(\bfG)_{\langle \sigma \rangle}$ is an isomorphism when $n$ odd and is $0$ when $n$ even.

The sets $\Phi$ and $\Delta$ are the same as in Section~\ref{sec:type_A_further_setup}. The action of $\sigma$ on $\Delta$ is given by $\sigma \colon \alpha_{i-(i+1)} \mapsto \alpha_{(n-i) - (n-i+1)}$. 
We have the Coxeter element
\begin{equation}\label{eq:special_Coxeter_element_type_2A}
c = s_{\alpha_{1-2}} \cdot \dots \cdot s_{\alpha_{m-(m+1)}} = (1,2,\dots,m,m+1).
\end{equation}
We consider the lift $\dot c_0$ of $c$ to $\bfG(\breve k)$, given by $\dot c_0(e_i) = e_{i+1}$ for $1\leq i \leq m$, $\dot c_0(e_{m+1}) = e_1$ and $\dot c_0(e_i) = e_i$ for the remaining $i$. If $n = 2m$ is even, we also consider the lift $\dot c_1$ given by $\dot c_1(e_i) = \dot c_0(e_i)$ for $i \neq m+1$, $\dot c_1(e_{m+1}) = \varpi e_1$. Then $\dot c_0$, $\dot c_1$ are basic, and if $n$ is even, $\bar\kappa_\bfG(\dot c_0) = 0$, $\bar\kappa_\bfG(\dot c_1) \neq 0$ in $\pi_1(\bfG)_{\langle \sigma \rangle}$. Thus, if $n = 2m+1$ is odd, the quasi-split group $\bfG$ (represented by the pair $(\bfG,\dot c_0)$) has no non-trivial inner forms; if $n=2m$ is even, there are two inner forms, represented by the pairs $(\bfG,\dot c_\kappa)$ with $\kappa \in \{0,1\}$.

We identify $\caA_{\bfT^{\ad},\breve k}$ with $X_\ast(\bfT^{\ad})_\bR$ by sending $0$ to the $\sigma$-stable hyperspecial point (``the origin'') corresponding to the standard lattice $\bigoplus_{i=1}^n \caO_{\breve k}e_i$.

The group ${}^c\bfU \cap \bfU^-$ is generated by the root subgroups attached to $\alpha_{i-1}$ ($2\leq i \leq m+1$). For a $\breve k$-algebra $R$, any element $y \in ({}^c\bfU \cap \bfU^-)(R)$ is an element of ${\bf GL}(V_0)(R)$, determined by $e_1 \mapsto e_1 + \sum_{\lambda=1}^m a_\lambda e_{\lambda+1}$, $e_i \mapsto e_i$ ($2\leq i \leq n$) for some $a_1, \dots, a_m \in R$.

\subsubsection{}\label{sec:2An1_computation} Let $b = \dot c_\kappa$ with $\kappa=0$ if $n$ odd and $\kappa \in \{0,1\}$ if $n$ even. We compute that ${\bf x}_b = 0$ if $\kappa = 0$ and that ${\bf x}_b$ is the image of $\sum_{\lambda = 1}^m \frac{1}{2} \beta_\lambda - \sum_{\lambda = m+2}^n \frac{1}{2} \beta_\lambda$ under $X_\ast(\bfT)_\bR \tar X_\ast(\bfT^{\ad})_\bR \cong \caA_{\bfT^{\ad}, \breve k}$ if $\kappa = 1$ (and $n=2m$ even). Let $g \in \dot X_{b,b}(\ff)$. Then \eqref{eq:coxeter_almost_cyclic_equation_GSp} holds with $y$ as described in Section~\ref{sec:type_2An1_roots_etc}, depending on some $a_1,\dots, a_m \in L$. We have to show that $y \in \caG_{{\bf x}_b}(\caO_L)$, which is (as in Section~\ref{sec:computation_case_An}) equivalent to 
\begin{equation}\label{eq:show_valuations_bigger_sth_type_2An1}
\ord\nolimits_\varpi(a_i) \geq 0 \quad \forall\, 1 \leq i\leq m
\end{equation}
(independently of what $\kappa$ is).

Let $c^{\spl} = c\sigma(c)$, and let $\dot c^{\spl}_\kappa = \dot c_\kappa \sigma(\dot c_\kappa)$. We write $b^{\spl}$ for $\dot c^{\spl}$ if $b = \dot c_\kappa$. We have the automorphism $b\sigma \colon g \mapsto \dot b\sigma(g)$ of $L\bfG$ (regarded as a sheaf on $\Perf_{\obF}$). As $\sigma^2 = \sigma_0^2$ (recall the identification in Section~\ref{sec:type_2An1_setup}), we have 
\begin{equation}\label{eq:bsigma_twice_type_2An1} 
(b\sigma)^2 = b^{\spl}\sigma^2 = b^{\spl}\sigma_0^2. 
\end{equation}

Let $V = V_0 \otimes_k L$, and denote by $\sigma_0 \colon V \rar V$ the Frobenius automorphism $v \otimes \lambda \mapsto v \otimes \phi(\lambda)$. Then $\varphi := b^{\spl}\sigma_0^2 \colon V \rar V$ is a $\phi^2$-linear automorphism of $V$. Moreover, $(V,\varphi)$ is an isocrystal over $\ff$ (relative to $\bF_{q^2}$) of slope $0$.

For $1 \leq i \leq n$, let $g_i := g(e_i)$. Then $\sigma_0^2(g)(e_i) = \sigma_0^2(g_i)$. Using \eqref{eq:bsigma_twice_type_2An1}, \eqref{eq:coxeter_almost_cyclic_equation_GSp} applied twice gives the equation 
\begin{equation}\label{eq:type_2An1_cyclic_equation_pre}
b^{\spl} \sigma_0^2(g) = g \cdot \left(y\cdot {}^b\sigma(y)\right) \cdot b^{\spl}.
\end{equation} 
Applying both sides of this to $e_i \in V$ and using the above, for $1\leq i \leq n$, we get 
\begin{equation}\label{eq:type_2An1_cyclic_equation}
\varphi(g_i) = g \cdot \left(y\cdot {}^b\sigma(y)\right) \cdot b^{\spl}(e_i).
\end{equation}

\subsubsection{} Assume that $n=2m+1$ is odd (and $b = \dot c_0$). Then \[ 
c^{\spl} = (1,2, \dots, m, m+1,2m+1,2m, \dots, m+3,m+2),
\]
$b^{\spl}$ is its lift with all entries $1$ or $0$, and $y\cdot {}^b\sigma(y)$ is given by $e_1 \mapsto e_1 + \sum_{\lambda = 1}^m a_i e_{i+1} - \phi(a_m)e_{2m+1}$; $e_i \mapsto e_i$ for $2 \leq i \leq m+1$ and $i=2m+1$; $e_i \mapsto e_i - \phi(a_{2m+1-i})e_{2m+1}$ for $m+2 \leq i \leq 2m$. Put $v := g_1$. Applying \eqref{eq:type_2An1_cyclic_equation} for all $i \neq m+2$, we deduce that $g_i = \varphi^{i-1}(v)$ for $1\leq i \leq m+1$, $g_i = \varphi^{3m+2-i}(v) + \sum_{\lambda = 1}^{2m+1-i} \phi^{1 + 2(2m-i-\lambda+1)}(a_\lambda) \phi^{3m+2-i-\lambda}(v)$ for $m+2 \leq i \leq 2m+1$. As $g \in {\bf GL}(V_0)(\breve k)$, the set $\{g_i\}_{i=1}^{2m+1}$ is a basis of $V$. This easily implies that the set $\{\varphi^i(v)\}_{i=0}^{2m}$ is a basis of $V$; \textit{i.e.}, $v$ is a cyclic vector for $(V,\varphi)$. Now, applying \eqref{eq:type_2An1_cyclic_equation} with $i = m+2$ and inserting the values of $g_i$, we deduce
\[
\varphi^{2m+1}(v) = v + \sum_{i=1}^m a_i \varphi^i(v) - \phi(a_m)\varphi^{m+1}(v) - \sum_{i = m+2}^{2m} \phi^{1+2(i-m-1)}(a_{2m-i+1})\varphi^i(v). 
\]
Applying Lemma~\ref{lm:isocrystal_lemma}, we deduce \eqref{eq:show_valuations_bigger_sth_type_2An1}.

\subsubsection{}\label{sec:type_2An1_even_comps} Assume that $n = 2m$ is even. Then 
\[
c^{\spl} = (1,2, \dots, m-1,m, 2m, 2m-1, \dots, m+3,m+2),
\]
$c^{\spl}_0$ is its lift with all entries $1$ or $0$, and $c^{\spl}_1$ is given by $c^{\spl}_1(e_m) = \varpi^{-1}e_{2m}$, $c^{\spl}_1(e_{m+2}) = \varpi e_1$, $c^{\spl}_1(e_i) = c^{\spl}_0(e_i)$ for all $i \neq m,m+2$. Next, $y\cdot {}^b\sigma(y)$ is given by $e_1 \mapsto e_1 + \sum_{\lambda = 1}^m a_i e_{i+1} - \varpi^{-\kappa}\phi(a_{m-1})e_{2m}$; $e_i \mapsto e_i$ for $2 \leq i \leq m$ and $i=2m$; $e_{m+1} \mapsto e_{m+1} - \phi(a_m)e_{2m}$;  $e_i \mapsto e_i - \phi(a_{2m-i})e_{2m}$ for $m+2\leq i \leq 2m-1$. Put $v := g_1$ and $u:= g_{m+1}$. Applying \eqref{eq:type_2An1_cyclic_equation} for all $i \neq m+1,m+2$, we deduce $g_i = \varphi^{i-1}(v)$ for $1\leq i \leq m$; $g_i = \varpi^\kappa\varpi^{3m-i}(v) + \varpi^\kappa \sum_{j = 1}^{2m-i} \phi^{1+2(2m-i-j)}(a_j)\varphi^{3m-i-j}(v)$ for $m+2\leq i \leq 2m$. As in the previous cases, from this description we deduce the following. 

\begin{lm}\label{lm:lin_indep_type_2An1}
The set $\{\varphi^i(v)\}_{i=0}^{2m-2} \cup \{u\}$ is a basis of\, $V$.
\end{lm}

Inserting the above values for $g_i$ into \eqref{eq:type_2An1_cyclic_equation} for $i=m+1$ and $i=m+2$, we deduce
\begin{align}
\label{eq:type_2An1_minor_cyclic_equation}0 &= u - \varphi(u) - \varpi^\kappa \phi(a_m)\varphi^m(v), \\
\label{eq:type_2An1_cyclic_equation_2} \varphi^{2m-1}(v) &= v + \sum_{i=1}^{m-1}a_i \varphi^i(v) - \phi(a_{m-1}) \varphi^m(v) - \sum_{i=m+1}^{2m-2} \phi^{1+2(i-m)}(a_{2m-1-i}) \varphi^i(v) + a_m u.
\end{align}
If $a_m = 0$, Lemma~\ref{lm:lin_indep_type_2An1} shows that $v$ is a cyclic vector for the $(2m-1)$-dimensional isocrystal generated by it. Then \eqref{eq:type_2An1_cyclic_equation_2} and Lemma~\ref{lm:isocrystal_lemma} finish the proof by showing formulas \eqref{eq:show_valuations_bigger_sth_type_2An1}. 

We may now assume  $a_m \neq 0$. Dividing \eqref{eq:type_2An1_cyclic_equation_2} by $a_m$, we obtain an expression of $u$ as a linear combination of $\{\varphi^i(v)\}_{i=0}^{2m-1}$, with $\varphi^{2m-1}(v)$ appearing with non-zero coefficient. Then Lemma~\ref{lm:lin_indep_type_2An1} shows that $v$ is a cyclic vector for $V$.  Moreover, inserting this expression of $u$ into \eqref{eq:type_2An1_minor_cyclic_equation} and multiplying with $a_m$, we eliminate $u$ and obtain an expression of the form $\varphi^{2m}(v) = \sum_{i=0}^{2m-1}\gamma_i \varphi^i(v)$ with fixed $\gamma_i \in L$. By Lemma~\ref{lm:isocrystal_lemma} we have $\ord_\varpi(\gamma_i) \geq 0$ for all $i$. We then compute that $\gamma_{2m-1} = 1 - \frac{a_m}{\phi^2(a_m)} \phi^{1+2(m-1)}(a_1)$, $\gamma_i = \phi^{1+2(i-m)}(a_{2m-1-i}) - \frac{a_m}{\phi^2(a_m)} \phi^{1+ 2(i-m)}a_{2m-i}$ for $m+1 \leq i \leq 2m-2$ and $\gamma_m = -\varpi^\kappa a_m \phi(a_m) + \phi(a_{m-1}) - \frac{a_m}{\phi^2(a_m)} \phi^2(a_{m-1})$. Using induction, it follows from these equations that $\ord_\varpi(a_i) \geq 0$ for $1\leq i \leq m$. This shows \eqref{eq:show_valuations_bigger_sth_type_2An1}.

\subsubsection{}\label{sec:type_2An1_even_extra_comp}

Suppose that $n=2m$ is even and $b=\dot c_\kappa$ ($\kappa \in\{0,1\}$) as in Section~\ref{sec:2An1_computation}. We have the central cocharacter $\zeta = \sum_{i=1}^n \beta_i \in X_{\ast}(\bfZ)$, whose image in $X_{\ast}(\bfZ)_{\langle \sigma \rangle} \cong \bZ/2\bZ$ is non-trivial but which maps to $0 \in \pi_1(\bfG)_{\langle \sigma \rangle}$. To finish the proof in the ${}^2A_{n-1}$-case (\textit{cf.}~the discussion after Lemma~\ref{lm:lift_to_zext}), we have to show that for all $g \in X_{b\varpi^\zeta,b}(\ff)$, we have $g^{-1}\sigma_b(g) \in ({}^c\bfU \cap \bfU^-)_{\bfx_b}(\caO_L)\varpi^\zeta$. As in \eqref{eq:coxeter_almost_cyclic_equation_GSp}, we can write 
\begin{equation}\label{eq:cyclic_eq_type_2An1_even_extra} 
b\sigma(g) = gy\varpi^\zeta b
\end{equation}
with $y \in ({}^c\bfU \cap \bfU^-)(L)$ depending on $a_1,\dots,a_m \in L$ as described in Section~\ref{sec:type_2An1_roots_etc}. We thus have to show that \eqref{eq:show_valuations_bigger_sth_type_2An1} holds for these $a_i$. However, doing the same computation as in Section~\ref{sec:2An1_computation} (with $b^{\spl}$ and other notation as there), we see that 
\[
b^{\spl}\sigma_0^2(g) = (b\sigma)^2(g) = gy\varpi^{\zeta+\sigma(\zeta)} {}^b\sigma(y) b^{\spl}
\]
(where we used that $\varpi^\zeta$ and $\varpi^{\sigma(\zeta)}$ are central). As $\zeta + \sigma(\zeta) = 0$, we arrive simply at \eqref{eq:type_2An1_cyclic_equation_pre} and may proceed as there (and in Section~\ref{sec:type_2An1_even_comps}) to deduce \eqref{eq:show_valuations_bigger_sth_type_2An1}.

\subsubsection{Proof of Lemma~\ref{lm:conjugation_crosssecion_proof} for type $\boldsymbol{{}^2A_{n-1}}$ }\label{sec:proof_of_crosssec_lemma_type_2An1}
The action of $c$ on $\Phi^+$ is as follows: for $1\leq i < j \leq m$, $c(\alpha_{i-j}) = \alpha_{(i+1)-(j+1)}$; for $1 \leq i \leq m$ and $m+2\leq j \leq n$, $c(\alpha_{i-j}) = \alpha_{(i+1) - j}$ and $c(\alpha_{(m+1) - j}) = \alpha_{1-j}$; for $m+2 \leq i < j \leq n$, $c(\alpha_{i-j}) =\alpha_{i-j}$; for $1 \leq j \leq m$, $c(\alpha_{(m+1)-j}) \in \Phi^-$. The action of $\sigma$ on positive roots is given by $\sigma(\alpha_{i-j}) = \alpha_{(n+1-j) - (n+1-i)}$.

We set $r = m+2$; $\Psi_{m+2} = \{ \alpha_{(m+1)-j} \}_{j=1}^m$; $\Psi_{m+1} = \Psi_{m+2} \cup \{\alpha_{i-j} \colon 1 \leq i \leq m \text{ and } m+2\leq j \leq n \}$; for $m \geq i_0 \geq 2$, $\Psi_{i_0} = \Psi_{i_0+1} \cup \{\alpha_{i-i_0}\}_{i=1}^{i_0-1} \cup \{\alpha_{(n-i_0)-j}\}_{j=n-i_0+1}^n$; $\Psi_1 = \Phi^+$.

\subsection{Type $\boldsymbol{{}^2D_m}$}

\subsubsection{}\label{sec:type_2Dm_setting} 
Let $m \geq 4$, $V_0$, $Q$ be as in Section~\ref{sec:type_Dn_setup}. Consider the closed subgroup ${\bf GSO}(V_0,Q) \subseteq {\bf GL}(V_0)$, which was denoted by $\bfG$ in Section~\ref{sec:type_Dn_setup}. It is $k$-split, and its derived group is of type $D_m$. First, suppose $\charac k \neq 2$. Fix some $\Delta \in \caO_k^\times$ which is not a square in $k$, and fix a square root $\sqrt{\Delta} \in \caO_{\breve k}^\times$. Let $h_0 \in {\bf GL}_{2m}(V_0)(k)$ be given by $h_0(e_i) = e_i$ for all $i \neq \pm m$ and $h_0(e_{\pm m}) = (-2\Delta)^{\pm 1} e_{\mp m}$. Now suppose $\charac k = 2$. Then let $h_0 \in {\bf GL}_{2m}(V_0)(k)$ be given by $h_0(e_i) = e_i$ for all $i \neq \pm m$ and $h_0(e_{\pm m}) = e_{\mp m}$. In this section we let $\bfG$ be the $k$-group ${\bf GSO}(V_0,Q)_{h_0}$ in the sense of Section~\ref{sec:twisting_Frobenius}. In particular, for any $\bF_q$-algebra $R_0$ with $\widetilde R = \bW(R_0 \otimes_{\bF_q} \obF)[1/\varpi]$, the geometric Frobenius on $\bfG(\widetilde R) = {\bf GSO}(V_0,Q)(\widetilde R) \subseteq {\bf GL}(V_0)(\widetilde R)$ is given by 
\[
g \longmapsto \sigma(g) = h_0 \sigma_0(g)h_0^{-1},
\]
where $\sigma_0$ is the Frobenius on ${\bf GL}(V_0)(\widetilde R)$ corresponding to the natural $k$-structure on ${\bf GL}(V_0)$. 

\begin{lm}\label{lm:type_2Dm_realization}
The derived group of the $k$-group $\bfG$ is of type ${}^2D_m$.
\end{lm}

\begin{proof}
First suppose  $\charac k \neq 2$. Let $Q' \colon V_0 \rar k$ be the quadratic form $Q'\left(\sum_{i=1}^m (a_i e_i + a_{-i}e_{-i})\right) = \sum_{i=1}^{m-1}a_ia_{-i} + a_m^2 - a_{-m}^2 \Delta$. It is non-split of Witt index $m-1$; hence ${\bf SO}(V_0,Q')$ is of type ${}^2D_m$. It suffices to construct a $k$-isomorphism ${\bf GSO}(V_0,Q') \cong \bfG$. Let $h_1 \in {\bf GL}(V_0)(\breve k)$ be the element given by $h_1(e_{\pm i}) = e_{\pm i}$ for $1\leq i\leq m-1$ and $h_1(e_m) = \frac{1}{2\sqrt{\Delta}} e_m + \frac{1}{2}e_{-m}$, $h_1(e_{-m}) = \sqrt{\Delta}e_m - \Delta e_{-m}$. Then $h_1^T\sigma_0(h_1)^{-1,T} = h_0$. If $Q$ (resp.\ $Q'$) denotes the matrix representing $Q$ (resp.\ $Q'$) with respect to the fixed basis of $V_0$ (so that $Q(v) = v^TQv$ for all $v\in V$), one immediately checks that $h_1Qh_1^T = Q'$. This means that $v \mapsto h_1^T v \colon (V,Q') \rar (V,Q)$ is a $\breve k$-isometry (\textit{i.e.}, $Q'(v) = Q(h_1^Tv)$) and that $\Int(h_1^{T,-1}) \colon {\bf GL}(V_0)_{\breve k} \rar {\bf GL}(V_0)_{\breve k}$ restricts to a $\breve k$-rational isomorphism $\Int(h_1^{T,-1}) \colon {\bf GSO}(V_0, Q)_{\breve k} \rar  {\bf GSO}(V_0,Q')_{\breve k}$. (Indeed, if $g \in {\bf GSO}(V_0, Q)$,  then $g^tQg = Q$ and one computes using $h_1 Q h_1^T = Q'$ that $\Int(h_1^{T,-1})(g)^T \, Q' \,\Int(h_1^{T,-1})(g) = Q'$.) For $\widetilde R$ as above, this isomorphism transfers the geometric Frobenius $\sigma_0$ of ${\bf GSO}(V_0,Q')(\widetilde R)$ to the Frobenius $\sigma$ given by
\[
\sigma(g) = \Int(h_1^{T,-1})\left(\sigma_0(\Int(h_1^{T,-1})(g))\right) = \Int(h_0)\left(\sigma_0(g)\right). 
\] 
Now suppose  $\charac k = 2$. Let $Q' \colon V_0 \rar k$ be the quadratic form $Q'\left(\sum_{i=1}^m (a_i e_i + a_{-i}e_{-i})\right) = \sum_{i=1}^{m-1}a_ia_{-i} + a_m^2 + a_m a_{-m} + \Delta a_{-m}^2$, where $\Delta \in \caO_k^\times$ is such that $X^2 + X + \Delta \in k[X]$ is irreducible. It is non-split of Witt index $m-1$; hence ${\bf SO}(V_0,Q')$ is of type ${}^2D_m$. Let $\lambda \in \caO_{\breve k}^\times$ be a root of $X^2 + X + \Delta$. Let $h_1 \in {\bf GL}(V_0)(\breve k)$ be given by $h_1(e_i) = e_{i}$ for $i\neq \pm m$, $h_1(e_m) = e_m + \lambda e_{-m}$, $h_1(e_{-m}) = e_m + \lambda^{-1}\Delta e_{-m}$. Then $h_1$ induces a $\breve k$-isometry between $(V,Q')$ and $(V,Q)$. We may conclude as above, computing that $h_1^T\sigma_0(h_1)^{-1,T} = h_0$. \qedhere
\end{proof}

\subsubsection{}\label{sec:type_2Dm_further_setting} Via the identification $\bfG \otimes_k \breve k \cong {\bf GSO}(V_0,Q)_{\breve k}$, we can use $\bfT,\bfB$, $\varepsilon_i \in X_\ast(\bfT)$ ($0\leq i \leq m$) from Section~\ref{sec:type_D_further_setting}. The induced action of $\sigma$ on $X_\ast(\bfT)$ is given by $\sigma(\varepsilon_i) = \varepsilon_i$ for $1\leq i \leq m-1$, $\sigma(\varepsilon_m) = -\varepsilon_m$, $\sigma(\varepsilon_0) = \varepsilon_0 - \varepsilon_m$. It follows that $\pi_1(\bfG^{\ad})_{\langle \sigma \rangle} \cong \bZ/2\bZ$, generated by the class of $\varepsilon_0$.

We have isomorphisms $\bZ \oplus \bZ/2\bZ \isor \pi_1(\bfG)$, $(1,0) \mapsto \varepsilon_0$, $(0,1) \mapsto \bar\varepsilon_1$ and $\bZ \isor X_\ast(\bfZ)$, $1 \mapsto 2 \varepsilon_0-\sum_{i=1}^m \varepsilon_i$. The inclusion $X_\ast(\bfZ) \rar \pi_1(\bfG)$ corresponds to the map $1\mapsto (2,1)$ if $m$ is odd and  to $1 \mapsto (2,0)$ if $m$ is even. Moreover, $\sigma$ acts on $X_\ast(\bfZ)$ as the identity, and the action on $\pi_1(\bfG)$ is given by $(a,b) \mapsto (0,(a \, \modd \, 2) + b)$. Justifying Remark~\ref{rem:trivial_kernel_exception} in this case, one easily deduces that the induced map $X_{\ast}(\bfZ)_{\langle \sigma\rangle} \cong \bZ \rar \bZ \cong \pi_1(\bfG)_{\langle \sigma\rangle}$ is multiplication by $2$, hence injective.

We have the roots $\alpha_{\pm i \pm j}$ of $\bfT$ in $\bfG$ as in Section~\ref{sec:type_D_further_setting}. The $\sigma$-action on the simple roots fixes all $\alpha_{i-(i+1)}$ ($1\leq i \leq m-2$) and interchanges $\alpha_{(m-1)-m}$ and $\alpha_{(m-1)+m}$. Thus we obtain a Coxeter element
\begin{equation}\label{eq:special_Coxeter_element_type_2D}
c = s_{\alpha_{1-2}} \cdot \dots \cdot s_{\alpha_{(m-1)-m}} = (1,2,\dots,m-1,m)(2m,2m-1,\dots,m+2,m+1). 
\end{equation}
We consider two lifts $\dot c_0$, $\dot c_1$ of $c$ to $\bfG(\breve k)$. For $\kappa \in \{0,1\}$, $\dot c_\kappa$ is given by $\dot c_\kappa(e_i) = \varpi^\kappa e_{i+1}$ for $1\leq i\leq m-1$,; $\dot c_\kappa(e_m) = \varpi^\kappa e_1$; $\dot c_\kappa(e_{-i}) = e_{-(i+1)}$ for $1\leq i \leq m-1$; $\dot c_\kappa(e_{-m}) = e_{-1}$. Then $\kappa_\bfG(\dot c_0) = 0$, $\kappa_\bfG(\dot c_1) = \bar\varepsilon_0$ in $\pi_1(\bfG)_{\langle \sigma \rangle}$. Thus the $\bfG_{\dot c_j}$ ($\kappa\in \{0,1\}$) are the two inner forms of $\bfG$. 

We use the $\caA_{\bfT^{\ad},\breve k}$ as in Section~\ref{sec:type_D_further_setting}. The group ${}^c\bfU \cap \bfU^-$ is generated by root subgroups attached to $\alpha_{i-1}$ ($2\leq i\leq m$). Explicitly, any element $y \in ({}^c\bfU \cap \bfU^-)(R)$ is an $R$-linear map in ${\bf GL}(V_0)(R)$ given by $e_1 \mapsto e_1 + \sum_{i=2}^m a_{i-1} e_i$; $e_i \mapsto e_i$ for $2\leq i \leq m$ and $i = -1$; $e_{-i} \mapsto e_{-i} - a_{i-1} e_{-1}$ for all $2\leq i \leq m$.

\subsubsection{}
Let $b = \dot c_\kappa$ with $\kappa \in \{0,1\}$. We compute $\bfx_b = 0$ if $\kappa = 0$ and ${\bf x}_b = \sum_{i=1}^m (-\frac{m}{4} + \frac{i}{2}) \varepsilon_i$ if $\kappa = 1$. Let $g \in \dot X_{b,b}(\ff)$. Then \eqref{eq:coxeter_almost_cyclic_equation_GSp} holds, with $n$ as described in Section~\ref{sec:type_2Dm_further_setting}, depending on some $a_1,\dots,a_{m-1} \in L$. We have to show that $n \in \caG_{\bfx_b}(\caO_L)$, which is (as in Section~\ref{sec:computation_case_An}) equivalent to
\begin{equation}\label{eq:to_show_type_2Dm}
\ord\nolimits_\varpi(a_i) \geq \frac{\kappa i}{2} \quad \text{for $1\leq i\leq m-1$.}
\end{equation}

Let $V = V_0 \otimes_k L$. Let $\sigma_0$ be the $\phi$-linear automorphism $x \otimes \lambda \mapsto x \otimes \phi(\lambda)$ of $V$.  Consider the $\phi$-linear automorphism $\varphi = bh_0\sigma_0$ of $V$. Then $(V,\varphi)$ is an isocrystal over $\ff$ of slope $\frac{\kappa}{2}$.
Equation \eqref{eq:coxeter_almost_cyclic_equation_GSp} gives $bh_0 \sigma_0(g) = gybh_0$. Applying this to $e_i$ for $1 \leq \pm i \leq m$, we get $\varphi(g_i) = gybh_0(e_i)$, where we write $g_i = g(e_i)$ (and where we use $\sigma_0(g)(e_i) = \sigma_0(g_i)$). Writing $v := g_1$, we deduce $g_i = \varpi^{1-i}\varphi^{i-1}(v)$ and $g_{-i} = -(2\Delta)^{-1} \varpi^{\kappa(1-m)}\left( \varphi^{m-1+i}(v) + \sum_{j=1}^{i-1} \phi^{i-j-1}(a_j)\varphi^{m+i-j-1}(v)\right)$ for $1\leq i \leq m$. Inserting this into the equation attached to $\varphi(g_{m+1}) = gybh_0(e_{m+1})$ gives 
\[
\varphi^{2m}(v) = \varpi^{\kappa m} v + \sum_{i=1}^{m-1} \varpi^{\kappa(m-i)} a_i \varphi^i(v) - \sum_{i=m+1}^{2m-1} \phi^{i-m}(a_{2m-i}) \varphi^i(v).
\]
It is easy to see that $v$ is cyclic for $V$. Using Lemma~\ref{lm:isocrystal_lemma}, \eqref{eq:to_show_type_2Dm} follows.

\subsubsection{Proof of Lemma~\ref{lm:conjugation_crosssecion_proof} for type $\boldsymbol{{}^2D_m}$ }\label{sec:proof_of_crosssec_lemma_type_2Dm}

The action of $c$ on $\Phi^+$ is as follows: for $1\leq i< j<m$, $c(\alpha_{i \pm j}) = \alpha_{(i+1)\pm (j+1)}$; for $1 \leq i \leq m-1$, $c(\alpha_{i+m}) = \alpha_{1 + (i+1)}$; for $1\leq i\leq m-1$, $c(\alpha_{i-m}) \in \Phi^-$. Moreover, $\sigma(\alpha_{i\pm j}) = \alpha_{i \pm j}$ if $j \neq m$ and $\sigma(\alpha_{i\pm m}) = \alpha_{i\mp m}$.

Set $r = m+1$; $\Psi_{m+1} = \{\alpha_{i-m}\}_{i=1}^{m-1}$; $\Psi_m = \Psi_{m+1} \cup \{\alpha_{i+j} \colon 1 \leq i < j < m \}$; $\Psi_{m-1} = \Psi_m \cup \{\alpha_{i+m}\}_{i=1}^{m-1}$; for $1 \leq i_0 \leq m-2$, $\Psi_{i_0} = \Psi_{i_0+1} \cup \{\alpha_{i-(i_0+1)} \colon 1\leq i < i_0 + 1 \}$.


\vspace{-\baselineskip}

\newcommand{\etalchar}[1]{$^{#1}$}

\end{document}